\newcommand{\RSE}{E}
\newcommand{\RSW}{O}
\newcommand{\AO}{AO}
\newcommand{\RP}{RP}
\newcommand{\PF}{PF}
\newcommand{\GR}[1]{\Sn{#1}(4231, 35142, 42513, 351624)}
\newcommand{\id}{\iota}
\newcommand{\mat}{\operatorname{mat}}
\newcommand{\m}{\operatorname{M}}
\newcommand{\M}{\mathcal{M}}
\newcommand{\X}{\mathcal{X}}
\newcommand{\inv}{\ell}
\newcommand{\sammag}{\rotatebox[origin=c]{180}{{\rm L}}}
\newcommand{\scrammag}{\rotatebox[origin=c]{180}{\scriptsize {\rm L}}}
\newcommand{\sLe}{\reflectbox{{\rm L}}}
\newcommand{\sGamma}{\Gamma}
\newcommand{\sEl}{{\rm L}}
\newcommand{\uu}{{\bf u}}
\newcommand{\vv}{{\bf v}}
\newcommand{\con}{\operatorname{con}}
\newcommand{\conw}{\con(w)}
\newcommand{\F}{\mathbf{F}}
\newcommand{\GL}{\mathrm{GL}}
\newcommand{\Sn}[1]{\mathfrak{S}_{#1}}
\newcommand{\Tnn}{Gr^{\geq 0}_{k,n}(\mathbb{R})}
\newcommand{\CC}{\mathbb{C}}
\newtheorem{prop}{Proposition}[section]
\newtheorem{proposition}[prop]{Proposition}
\newtheorem{cor}[prop]{Corollary}
\newtheorem{theorem}[prop]{Theorem}
\newtheorem*{mainthmmat}{Theorem~\ref{thm:matvspoin}}
\newtheorem*{mainthm}{Theorem~\ref{theorem:AO-RA}}
\newtheorem*{corLe}{Corollary~\ref{corollary:AO-Le}}
\newtheorem{lemma}[prop]{Lemma}
\newtheorem{prob}[prop]{Problem}
\newtheorem{conjecture}[prop]{Conjecture}
\theoremstyle{remark}
\newtheorem{example}[prop]{Example}
\newtheorem{remark}[prop]{Remark}
\newtheorem{definition}[prop]{Definition}
\begin{document}
\title{Combinatorics of diagrams of permutations}

\author[J.B. Lewis and A.H. Morales]{
Joel Brewster Lewis (University of Minnesota)\\
Alejandro H. Morales (University of California, Los Angeles)
}

\thanks{JBL was supported by NSF grants DMS-1148634 and DMS-1401792}
\thanks{AHM was supported by a CRM-ISM Postdoctoral Fellowship}
\thanks{An extended abstract of this article appeared as \cite{LMFPSAC}.}

\maketitle

\begin{abstract}
There are numerous combinatorial objects associated to a Grassmannian permutation $w_\lambda$ that index cells of the totally nonnegative Grassmannian. We study several of these objects and their $q$-analogues in the case of permutations $w$ that are not necessarily Grassmannian. We give two main results: first, we show that certain acyclic orientations, rook placements avoiding a diagram of $w$, and fillings of a diagram of $w$ are equinumerous for all permutations $w$.  Second, we give a $q$-analogue of a result of Hultman--Linusson--Shareshian--Sj\"ostrand by showing that under a certain pattern condition the Poincar\'e polynomial for the Bruhat interval of $w$ essentially counts invertible matrices over a finite field avoiding a diagram of $w$.  In addition to our main results, we include at the end a number of open questions.
\end{abstract}

\section{Introduction}

In his study \cite{AP} of the totally nonnegative Grassmannian $\Tnn$,
Postnikov introduced a ``zoo''
of combinatorial objects that parametrize cells of the
matroidal decomposition of $\Tnn$.  This decomposition refines the
Schubert decomposition $Gr_{k, n} = \bigcup_{\lambda \subset \langle (n -
  k)^k \rangle} \Omega_\lambda$, and the members of the zoo are most
easily identified with the \emph{Grassmannian permutations} $w =
w_\lambda \in \Sn{n}$.  Among the objects that appear in the zoo are the following four (whose precise definitions will be given later):
\begin{compactenum}[(i)]
\item The set of \emph{acyclic orientations} of the \emph{inversion graph} of $w$.

\item The set of \emph{placements of $n$ non-attacking rooks} on a board associated to $w$.

\item The set of certain \emph{restricted fillings} of a diagram associated to $w$.

\item The set of permutations below $w$ in the \emph{strong Bruhat order}.
\end{compactenum}

\noindent Work of Postnikov establishes the following result.
\begin{theorem}[{\cite[Thm.\ 24.1]{AP}}] 
\label{thm:APGrass}
For a Grassmannian permutation $w_{\lambda}$ in $\Sn{n}$, the sets above are equinumerous.
\end{theorem}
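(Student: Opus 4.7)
The plan is to prove the equinumeracy by routing all four sets through a common combinatorial object, namely the $\Gamma$-diagrams (Le-diagrams) of shape $\lambda$, and exhibiting a bijection from each of $(i)$--$(iv)$ to this set.

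For objects $(ii)$ and $(iii)$, I would identify these with $\Gamma$-diagrams essentially by inspection. When $w = w_\lambda$ is Grassmannian, the board appearing in $(ii)$ and the diagram appearing in $(iii)$ both coincide, after an appropriate reflection, with the Young diagram of $\lambda$; the non-attacking rook condition in $(ii)$ and the restriction on the fillings in $(iii)$ each unwind to precisely the Le condition. Most of the work here is bookkeeping, carefully pinning down how the standard definitions specialize when $w_\lambda$ has its unique descent at position $k$.

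For the acyclic orientations in $(i)$, the inversion graph of $w_\lambda$ has rigid structure: its vertices can be identified with the cells of $\lambda$ and its edges join pairs of cells that share a row or a column. I would exhibit a bijection from acyclic orientations of such a ``rook-like'' graph to $\Gamma$-diagrams by processing the cells column by column, marking as a $1$-entry of the $\Gamma$-diagram each cell that is a sink of the induced orientation on the already-processed part of the board. Checking that the resulting filling satisfies the Le condition, and that the procedure is reversible, is a local acyclicity argument.

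The deepest piece is the Bruhat interval in $(iv)$. To match $|[\mathrm{id},w_\lambda]|$ with the number of $\Gamma$-diagrams of shape $\lambda$, I would fix a canonical reduced word for $w_\lambda$ obtained by reading the cells of $\lambda$ in a standard order, and then use the Subword Property of Bruhat order to attach to each $v \le w_\lambda$ its lexicographically least reduced subword. The cells of $\lambda$ selected by this subword become the $1$-entries of a filling, and the canonicity of the choice should be shown to force the Le condition.

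The main obstacle is this last bijection: the first three identifications are essentially local combinatorics on $\lambda$, while $|[\mathrm{id},w_\lambda]|$ is a global order-theoretic invariant. Establishing that the canonical-subword construction lands in $\Gamma$-diagrams and is a bijection is the content of Postnikov's original argument in \cite{AP} and genuinely uses the Grassmannian hypothesis. A useful sanity check, which I would carry out in parallel, is to compute both quantities by a common recursion on $\lambda$: removing a corner box of $\lambda$ on the $\Gamma$-diagram side corresponds to a parabolic decomposition of $[\mathrm{id}, w_\lambda]$, and matching the resulting generating functions gives an independent confirmation of the count.
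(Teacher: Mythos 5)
The paper does not actually prove this statement: it cites Postnikov \cite{AP} and then spends the remaining sections generalizing pieces of it, so the fair comparison is against the machinery the paper develops for those generalizations (and against Postnikov's own argument, sketched in the proof of Theorem~\ref{thm:postElGrass}).

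Your treatment of item (iv) is sound and is essentially Postnikov's: take a reduced word for $w_\lambda$ whose letters sit on the cells of $\lambda$, use the subword property to attach to each $v\preceq w_\lambda$ a lexicographically extremal reduced subword, and observe that extremality is exactly the $\sGamma$ (resp.\ $\sammag$) condition on the resulting filling. That matches the pipe-dream argument reproduced in the paper.

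However, your item (ii) rests on a false premise. The board for the rook placements is the complement $\overline{\RSW_{w_\lambda}}$ of the south-west diagram, not a reflected copy of the Young diagram of $\lambda$: it has $\binom{n+1}{2}+|\lambda|$ cells. For $w_\lambda=3412$, for instance, the board has $14$ cells, and while there happen to be $14$ placements of $4$ non-attacking rooks on it and $14$ $\sGamma$-diagrams of shape $\langle2,2\rangle$, there is no cell-by-cell identification to be made ``by inspection.'' This is in fact the piece the paper works hardest for: even the more general equality $\AO(G_w)=\RP(\RSW_w)$ is established in Appendix~\ref{sec:AOvsRP} via chromatic and rook polynomial identities, not a local bijection. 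Your proposal needs a genuine argument here, not bookkeeping.

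Your item (i) is also underspecified. The ``mark sinks column by column'' recipe is not clearly well-defined or reversible. The natural correspondence, used in Proposition~\ref{proposition:AO-PAF}, is more direct: each cell of $\RSE_{w_\lambda}$ (i.e.\ of $\lambda$) is an inversion, hence an edge of $G_{w_\lambda}$; direct the edge right or left according to a $1$ or $0$ in that cell; then acyclicity is precisely percentage-avoidance of the filling. This lands you in percentage-avoiding fillings of $\lambda$, which are \emph{equinumerous} with, but not equal to, $\sGamma$-diagrams; passing between them requires the bijections of Spiridonov \cite{AS} or Josuat-Verg\`es \cite{MJV}. You should name that step rather than fold it into ``inspection,'' since it is where the $\sGamma$ condition actually enters.
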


This theorem naturally raises the following question:

\begin{prob} \label{prob:main}
 Characterize the relation among these sets when $w$ is not Grassmannian.
\end{prob}

In the rest of this introduction we give background on previous work and a summary of
our own results towards answering this problem and its refinements.

\subsection{Definitions}

We begin by giving the definitions of the terms in the preceding paragraphs, which will be used throughout this paper.  Several definitions are illustrated in Figure~\ref{fig:exgrass}.

The pair $(i, j)$ is said to be an {\bf inversion} of the permutation $w \in \Sn{n}$ if $1\leq i<j \leq n$ and $w_i>w_j$.  
The {\bf inversion graph} $G_w$ of $w$ is the graph with vertex set
$[n]:= \{1,2,\ldots,n\}$ and with edges given by the inversions of $w$.  We consider the vertices to be ordered with smaller vertices to the \emph{left} or \emph{earlier} and larger vertices to the \emph{right} or \emph{later}.  
An {\bf acyclic orientation} of a graph $G$ is an orientation of the edges of $G$ so that the oriented graph has no directed cycles.  The number of acyclic orientations of $G$ is denoted $\AO(G)$.

A \textbf{diagram} (or \textbf{board}) is a finite subset of $\mathbb{Z}_{> 0} \times \mathbb{Z}_{> 0}$.
The {\bf south-east (SE) diagram} $\RSE_w$ (respectively, {\bf south-west (SW) diagram} $\RSW_w$) of the
permutation $w$ is the subset of 
$[n] \times [n]$ consisting of those
elements not directly to the south or east (respectively, south or west) of a nonzero
entry in the permutation matrix of $w$. (In \cite[\S 2.1]{LM}, the diagram
$\RSE_w$ is called the \emph{Rothe diagram} of $w$.) The size of $\RSE_w$ is the
number $\inv(w)$ of inversions of $w$, while the size of $\RSW_w$ is the number of {\bf anti-inversions}, i.e., the number of pairs $(i, j)$ such that $1 \leq i < j \leq n$ and $w_i < w_j$.  (This is also $\binom{n}{2} - \inv(w)$.)  Equivalently, $\RSE_w$ is the subset of $[n] \times [n]$ consisting of all pairs $(i,
w_j)$ such that $i< j$ and $w_j < w_i$ and $\RSW_w$ is the subset of $[n] \times [n]$ consisting of all pairs $(i,
w_j)$ such that $i<j$ and $w_j > w_i$.

A \textbf{rook placement} on a board $B$ is a set of cells (``rooks'') of $B$ such that no two lie in the same row or column.  For $B \subseteq [n] \times [n]$, we denote by $\RP(B)$ the number of rook placements of $n$ rooks \emph{avoiding} $B$, i.e., the number of placements of $n$ rooks on $([n]\times [n])\smallsetminus B$.

The {\bf (strong) Bruhat order} $\preceq$ of $\Sn{n}$ is the partial order on the symmetric group defined by the cover relations $w \prec w \cdot t_{ij}$ if $\ell(w\cdot t_{ij})=\ell(w)+1$ and $t_{ij}$ is the transposition that switches $i$ and $j$. 

We say that a permutation $w_{\lambda}$ in the symmetric group
$\Sn{n}$ on $n$ letters is a {\bf Grassmannian permutation} if it has at most one descent;
say the position of the descent is $k$. Each
such permutation is associated to a partition $\lambda$ inside the $k
\times (n-k)$ box $\langle (n - k)^k\rangle$ (i.e., a partition with at most
$k$ parts and largest part at most $n-k$). This correspondence can be
seen from the south-east diagram $\RSE_{w_{\lambda}}$, which is the Ferrers diagram of
$\lambda$ in French notation with possibly some columns in between,
see Example~\ref{examples of diagrams and graphs}. (Equivalently, this
correspondence comes from a certain {\em wiring diagram} of
$w_{\lambda}$, see \cite[Sec.~19]{AP} and Section~\ref{sec:fillings}.) 

A {\bf filling} of a diagram $D$ is an assignment of $0$s and $1$s to the elements of $D$.  A filling of $D$ is said to be {\bf percentage-avoiding}%
\footnote{
Postnikov \cite{AP} worked with a related family of fillings he called $\sLe$-diagrams, though percentage-avoidance is also implicit in some parts of his work.  The enumerative relationship between $\sLe$-diagrams and percentage-avoiding fillings is made explicit in Spiridonov \cite[\S 4]{AS} and is treated bijectively by Josuat-Verg\`es \cite[\S4]{MJV}.  For more on $\sLe$-diagrams, see Sections~\ref{sec:fillings} and~\ref{sec:pseudofillings}.
}
\cite{reiner-shimozono} if there are no four entries in $D$ at the vertices of a (axis-aligned) rectangle with either of the following fillings: \raisebox{-5pt}{\includegraphics{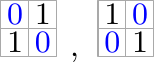}}.

\begin{example}
\label{examples of diagrams and graphs}
For the Grassmannian permutation $w=3412$, the SW diagram $\RSW_{3412}$ consists of the two elements
$(1,4), (3,2)$ and the SE diagram $\RSE_{3412}$ consists of the four
elements $(1,1)$, $(1,2)$, $(2,1)$, $(2,2)$. This $w$ has four inversions, at
positions $(1,3), (1,4), (2,3), (2,4)$, and two anti-inversions, at
positions $(1,2), (3,4)$. The inversion graph $G_{3412}$ has four
edges.  See Figure~\ref{ex:all3412}. 

For $w=3142$, the SW diagram $\RSW_{3142}$ and the SE diagram $\RSE_{3142}$ have three elements each and the inversion graph $G_{3142}$ has three edges.  This $w$ is not Grassmannian, and its SE diagram is not the diagram of a partition in French notation. See Figure~\ref{ex:all3142}.

\begin{figure}
\begin{center}
\subfigure[]{
\includegraphics{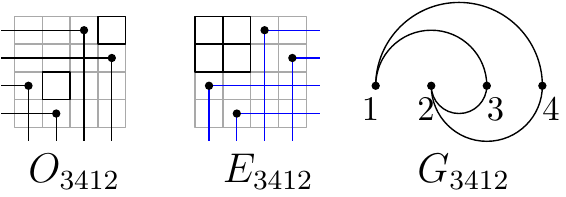}
\label{ex:all3412}
}
\quad
\subfigure[]{
\includegraphics{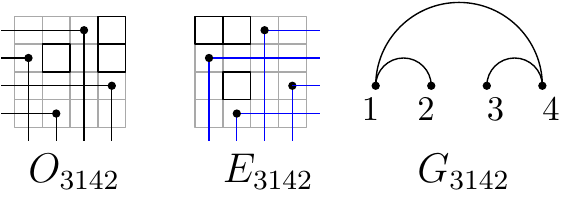}
\label{ex:all3142}
}
\caption{ The SW diagram, the SE diagram, and the
  inversion graph of the permutations (a) $3412$ and (b) $3142$.  For the Grassmannian permutation $3412$ (associated to the shape $\lambda = \langle2, 2\rangle$) we have
 $\AO(G_{3412})=\RP(O_{3412})=\#[1234, 3412]=14$.
}
\label{fig:exgrass}
\end{center}
\end{figure}
\end{example}

\subsection{Previous work}

The first result in the direction of Problem~\ref{prob:main} was the paper \cite{HLSS} of
Hultman--Linusson--Shareshian--Sj\"ostrand settling a conjecture of Postnikov \cite[Rem.~24.4]{AP}.
Their result explains the relation between the
number $\AO(G_w)$ of acyclic orientations of the inversion graph of $w$ and the
size of the Bruhat interval $[\id,w]$
.

\begin{theorem}[{\cite[Thm.~4.1, Cor.~5.7]{HLSS}}] 
\label{thm:HLSS}
For every permutation $w$ in $\Sn{n}$, we have $\AO(G_w) \leq \#[\id,w]$.  Furthermore, equality holds 
if and only if $w$ avoids the permutation patterns $4231$, $35142$, $42513$, and $351624$.
\end{theorem}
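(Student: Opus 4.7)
The plan is to prove the inequality by constructing an injection $\phi \colon \AO(G_w) \hookrightarrow [\id, w]$, and then to characterize when $\phi$ is bijective via pattern avoidance.

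\medskip

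\noindent\textbf{Constructing the injection.} Fix a reduced expression $R = s_{i_1}\cdots s_{i_\ell}$ for $w$, where $\ell = \ell(w) = |E(G_w)|$. The letters of $R$ are in natural bijection with the edges of $G_w$: the $k$-th letter $s_{i_k}$ corresponds via the wiring diagram of $w$ to the unique inversion of $w$ created at the $k$-th crossing. Given an acyclic orientation $\theta$ of $G_w$, let $S_\theta$ denote the subset of $R$-positions whose corresponding edge is oriented ``backward'' (i.e.,~$j \to i$ with $i<j$), and set $\phi(\theta) := \prod_{k \in S_\theta} s_{i_k}$, taken in $R$-order. The subword property of Bruhat order gives $\phi(\theta) \preceq w$. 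Proving injectivity of $\phi$ is the technical heart of the inequality: one shows that distinct acyclic orientations of $G_w$ yield distinct subword products, using the acyclicity of $\theta$ to put the subword into a canonical reduced form and ruling out the braid-relation collisions that could otherwise occur. This yields $\AO(G_w) \le \#[\id, w]$.

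\medskip

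\noindent\textbf{Pattern characterization.} Equality is reduced to surjectivity of $\phi$.

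\emph{Forward direction:} if $w$ contains an occurrence of one of the patterns $4231$, $35142$, $42513$, or $351624$, exhibit a $v \preceq w$ outside the image of $\phi$. For each of the four patterns, the witness positions give a pattern-specific construction of such a $v$, typically described by flipping certain entries of $w$ in a way that is Bruhat-compatible but not reachable by any backward-edge-set of an acyclic orientation.

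\emph{Reverse direction:} if $w$ avoids all four patterns, show $\phi$ is onto by constructing, for each $v \preceq w$, an acyclic orientation $\theta_v$ with $\phi(\theta_v) = v$. One embeds a reduced expression of $v$ as a subword of $R$ and verifies that the corresponding backward-edge set is acyclic in $G_w$; pattern avoidance is precisely what excludes the cyclic configurations that would otherwise arise.

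\medskip

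\noindent\textbf{Main obstacle.} The deepest part of the proof is the reverse direction of the pattern characterization: showing that avoidance of the four patterns is \emph{exactly} what is needed for surjectivity of $\phi$. Concretely, one must verify that the obstructions to surjectivity --- permutations $v \preceq w$ not arising as $\phi(\theta)$ for any acyclic $\theta$ --- correspond bijectively to occurrences of the four listed patterns in $w$. This demands a delicate case analysis on the reduced expressions and wiring diagram of $w$, matching each potential obstruction to a specific forbidden pattern, and is the source of the seemingly ad hoc list of four patterns in the statement.
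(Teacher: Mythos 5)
The paper does not prove this statement; it is quoted from Hultman--Linusson--Shareshian--Sj\"ostrand \cite{HLSS} (their Theorem~4.1 and Corollary~5.7) and used as an external input. So the comparison has to be between your sketch and the HLSS argument, and here there is a genuine problem: the injectivity claim at the heart of your construction is false. Take $w=321$ with reduced word $R=s_1s_2s_1$; the three crossings correspond, as position--pair inversions, to the edges $\{2,3\}$, $\{1,3\}$, $\{1,2\}$ of $G_w=K_3$. The acyclic orientation coming from the linear order $1<3<2$ has backward-edge set $\{\{2,3\}\}$, hence $S_\theta=\{1\}$ and $\phi(\theta)=s_1$; the orientation coming from $2<1<3$ has backward-edge set $\{\{1,2\}\}$, hence $S_\theta=\{3\}$ and $\phi(\theta)=s_1$ again. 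These are two distinct acyclic orientations of $K_3$ with the same image, so $\phi$ is not injective. (Indeed, in this example $\phi$ also misses $s_2$, so it is neither injective nor surjective, while $\AO(G_w)=\#[\id,w]=6$.) The collision is not a braid-relation subtlety that a canonical reduced form can dodge: it is simply that a one-letter subword such as $s_1$ can be picked out at several positions of $R$, and nothing about acyclicity forces different orientations to pick out subwords with different products. A case analysis on braid moves will not repair this.

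The actual HLSS argument runs in the opposite direction. They work with the inversion hyperplane arrangement $\mathcal{A}_w$, whose regions biject with acyclic orientations of $G_w$, and they consider the natural map from permutations to regions given by placing the permutation's point into a region. They then prove, using Sj\"ostrand's ``right-hull'' description of Bruhat intervals \cite{Sj}, that every region of $\mathcal{A}_w$ contains the point of at least one permutation in $[\id,w]$; this surjection from $[\id,w]$ onto regions gives $\#[\id,w]\geq\AO(G_w)$. Equality is then equivalent to each region containing a unique element of $[\id,w]$, and the delicate part of \cite{HLSS} is showing that failure of uniqueness is detected exactly by one of the four patterns $4231$, $35142$, $42513$, $351624$. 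If you want to salvage a subword-style approach you would need to associate to each $u\preceq w$ a \emph{canonical} reduced subword of $R$ (lexicographically extremal, as in Postnikov's pipe-dream argument sketched in Section~\ref{sec:fillings}), and then relate the canonical subwords to orientations, which is a substantially different plan from what you wrote.
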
 
The permutations on which equality is achieved are very special, and
will appear in the sequel.  We call them {\bf Gasharov--Reiner
  permutations} after their first appearance \cite{GasharovReiner} in
the literature.  (These permutations were recently enumerated by
Albert and Brignall \cite{AlbertBrignall}.)

Various authors have also explored $q$-analogues of the objects
defined above.  For example, \cite[Thm.~8.1]{HLSS} gives a $q$-analogue
of Theorem~\ref{thm:HLSS} involving the chromatic polynomial. Also, Oh--Postnikov--Yoo established the following result linking a $q$-analogue $A_w(q)$ (whose definition we omit) of $\AO(G_w)$ to the {\bf Poincar\'e polynomial} $P_{w}(q)=\sum_{u\preceq w}
q^{\ell(u)}$ (here the sum is over the permutations $u$ in the interval $[\id, w]$ of the Bruhat order).

\begin{theorem}[Oh--Postnikov--Yoo {\cite[Thm.\ 7]{OPY}}] 
\label{thm:OPY}
For a permutation $w$ in $\Sn{n}$, $A_w(q) = P_w(q)$ if and only if
$w$ avoids $3412$ and $4231$. 
\end{theorem}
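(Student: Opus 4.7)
The plan is to prove both implications through the common framework of smooth Schubert varieties. Recall that the Lakshmibai--Sandhya criterion identifies avoidance of $3412$ and $4231$ with (rational) smoothness of the Schubert variety $X_w$, and that the Carrell--Peterson theorem then gives a product factorization $P_w(q) = \prod_{i=1}^{n-1}[e_i + 1]_q$, where the exponents $e_i = e_i(w)$ can be read off directly from the inversion graph $G_w$ (equivalently, from a reduced word of $w$ via the Billey--Postnikov recipe).

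For the sufficient direction, I would use that for $w$ avoiding $3412$ and $4231$ the graph $G_w$ admits a perfect elimination ordering compatible with the exponents $e_i$ (this is the content of the Gasharov side of the story, where the relevant inversion graphs turn out to be chordal). Since $A_w(q)$ specializes to $\AO(G_w)$ at $q=1$ and is built out of a statistic on acyclic orientations, I would establish a deletion--contraction-type recursion for $A_w(q)$ along a simplicial vertex of $G_w$. Iterating this recursion along the perfect elimination ordering yields a parallel factorization $A_w(q) = \prod_{i=1}^{n-1}[e_i + 1]_q$ with the \emph{same} exponents, and matching term-by-term gives $A_w(q) = P_w(q)$.

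For the necessary direction, I would first specialize at $q = 1$. Since $A_w(1) = \AO(G_w)$ and $P_w(1) = \#[\id, w]$, the polynomial identity forces equality of these integers, so by Theorem~\ref{thm:HLSS} the permutation $w$ must be Gasharov--Reiner, i.e., avoid $4231$, $35142$, $42513$, $351624$. It remains to rule out the Gasharov--Reiner permutations that contain $3412$. The minimal such obstruction is $w = 3412$ itself, where a direct calculation produces explicit $q$-coefficients that disagree (both polynomials specialize to $14$ at $q = 1$ but their distributions by length differ). For a general Gasharov--Reiner $w$ containing $3412$, I would localize the discrepancy: since both $A_w(q)$ and $P_w(q)$ behave multiplicatively under direct sum / skew sum of permutations (which preserves both the avoidance class and the factorization on either side), one can peel off irreducible blocks and reduce to a minimal non-smooth Gasharov--Reiner block containing $3412$, where the computation at $3412$ itself forces the inequality to persist.

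The main obstacle is the matching of factorizations in the sufficient direction. Producing the factorization of $A_w(q)$ requires carefully identifying the correct simplicial vertex to peel from $G_w$ and verifying that the $q$-statistic underlying $A_w(q)$ splits cleanly under that removal; then one must identify the factor $[e_i+1]_q$ produced at each step with the exponent appearing in the Billey--Postnikov factorization of $P_w(q)$. This bookkeeping -- aligning a graph-theoretic elimination ordering on the combinatorial side with the reduced-word/bracket data on the geometric side -- is the technical heart of the argument.
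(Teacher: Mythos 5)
The paper does not actually prove this theorem: it is imported wholesale from Oh--Postnikov--Yoo \cite[Thm.~7]{OPY}, so there is no in-paper argument to compare yours against. That said, your outline diverges from the published proof in a way that introduces a genuine gap in the ``only if'' direction.

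Your ``only if'' step first specializes at $q=1$, invokes Theorem~\ref{thm:HLSS} to conclude that $w$ is Gasharov--Reiner, and then tries to exclude Gasharov--Reiner permutations containing $3412$ by ``peeling off'' direct and skew summands until a $3412$ block is isolated. This localization does not go through: a Gasharov--Reiner permutation containing $3412$ need not be a direct or skew sum at all (plenty of simple, i.e.\ interval-free, permutations contain $3412$), and even when it is decomposable the pattern $3412$ can occur across the block structure of an inflation rather than inside a single summand, so there is no reason the reduction terminates at $w=3412$. You would need a separate structural theorem asserting that every non-smooth Gasharov--Reiner permutation can be reduced, by operations under which both $A$ and $P$ are multiplicative, to the single permutation $3412$ --- and that is not established. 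The standard and much shorter route, which is the one used in \cite{OPY}, bypasses $q=1$ entirely: the polynomial $A_w(q)$ is \emph{always} palindromic of degree $\inv(w)$, because the inversion hyperplane arrangement is centrally symmetric (the antipodal map $v\mapsto -v$ on regions sends a region at distance $d$ from the base region to one at distance $\inv(w)-d$). Hence $A_w(q)=P_w(q)$ forces $P_w(q)$ to be palindromic, and Lemma~\ref{lemma:CP} (Lakshmibai--Sandhya / Carrell--Peterson) then says $w$ is smooth, i.e.\ avoids $3412$ and $4231$, with no appeal to Theorem~\ref{thm:HLSS} and no pattern-by-pattern casework. Your ``if'' direction (matching the factorization $\prod_i[e_i+1]_q$ of $P_w(q)$ with a factorization of $A_w(q)$ obtained from a perfect elimination ordering of the chordal graph $G_w$) is the right idea and is essentially what \cite{OPY} does, though the bookkeeping of exponents you flag as the technical heart does need to be carried out.
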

\noindent Note that the classes of permutations of Theorem~\ref{thm:HLSS} and
Theorem~\ref{thm:OPY} differ, thus the $q=1$ version of the latter
does not imply the former. The class of permutations that appear in
Theorem~\ref{thm:OPY} are known as {\bf smooth permutations}.  They
have many very interesting properties (see \cite[\S 4]{AbeBilley}), of which we mention two here.

\begin{lemma}[Lakshmibai--Sandya \cite{LakshmibaiSandhya}, Carrell--Peterson \cite{CarrellPeterson}] \label{lemma:CP}
The following are equivalent for $w$ in $\Sn{n}$:
\begin{compactitem}
\item[(a)] The permutation $w$ is smooth.
\item[(b)] The Schubert variety $X_w$ associated to $w$ is smooth. 
\item[(c)] The Poincar\'e polynomial $P_w(q)$ is palindromic, that is, $P_w(q) = q^{\ell(w)}P_w(q^{-1})$.
\end{compactitem}
\end{lemma}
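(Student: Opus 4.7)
The plan is to establish the three-way equivalence cyclically: (b) $\Rightarrow$ (c) $\Rightarrow$ (a) $\Rightarrow$ (b), combining algebraic geometry for the topological content with combinatorics for the pattern condition.

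The easiest implication is (b) $\Rightarrow$ (c). The Schubert variety $X_w$ has a cell decomposition by Schubert cells $X_u^{\circ} \cong \mathbb{A}^{\ell(u)}$ indexed by $u \preceq w$, so its integral cohomology is free, concentrated in even degrees, with total Poincar\'e polynomial $P_w(q^2)$. If $X_w$ is smooth, then as a closed complex manifold of real dimension $2\ell(w)$ it satisfies classical Poincar\'e duality, which forces the palindromy $P_w(q) = q^{\ell(w)}P_w(q^{-1})$.

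For (c) $\Rightarrow$ (a), I would argue contrapositively. First, I would enumerate the Bruhat intervals $[\id, 3412]$ and $[\id, 4231]$ directly and verify that neither $P_{3412}(q)$ nor $P_{4231}(q)$ is palindromic. Then, using a parabolic factorization of $P_w(q)$ of Billey--Postnikov type along the positions carrying a containing pattern, I would argue that whenever $w$ contains $3412$ or $4231$ the non-palindromic factor persists, so $P_w(q)$ itself fails palindromy.

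The hard step is (a) $\Rightarrow$ (b), the Lakshmibai--Sandhya direction, and this is where the real obstacle lies. The natural tool is the Kumar/Polo tangent-space criterion: $X_w$ is smooth at the $T$-fixed point $e_u$ iff the number of reflections $t$ with $u \prec ut \preceq w$ equals $\ell(w)$. One then translates the pattern-avoidance hypothesis into a uniform combinatorial bound on these covering transpositions for every $u \preceq w$. The delicate case is $4231$, where one must rule out "extra" tangent directions coming from long transpositions $(i,\ell)$ whose action on $w$ inserts an unexpected element into $[\id, w]$; this requires an inductive analysis on the one-line notation of $w$. There is no purely elementary route bridging pattern avoidance and tangent-space dimensions, so in practice, since the lemma is only invoked here, I would simply cite Lakshmibai--Sandhya and Carrell--Peterson rather than reconstruct the full argument.
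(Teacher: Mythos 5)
The paper does not prove Lemma~\ref{lemma:CP}; it is imported as a classical fact, with the citations to Lakshmibai--Sandhya and Carrell--Peterson standing in for a proof. Since you ultimately conclude that you ``would simply cite Lakshmibai--Sandhya and Carrell--Peterson rather than reconstruct the full argument,'' you land exactly where the paper does, and at the level of what the paper commits to you are in agreement.

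A few comments on the sketch itself, since you offer one. Your (b)~$\Rightarrow$~(c) via the Schubert cell decomposition (even cohomology, so Poincar\'e duality for a smooth compact complex variety gives palindromy) is standard and correct. Your cyclic scheme (b)~$\Rightarrow$~(c)~$\Rightarrow$~(a)~$\Rightarrow$~(b), however, relocates the hard content in a nonstandard way: classically (a)~$\Leftrightarrow$~(b) is Lakshmibai--Sandhya, and (b)~$\Leftrightarrow$~(c) is Carrell--Peterson, whose hard direction is ``palindromic $\Rightarrow$ (rationally) smooth,'' combined in type~$A$ with the coincidence of smoothness and rational smoothness. The step you label (c)~$\Rightarrow$~(a) is where your outline is shakiest: a Billey--Postnikov-type factorization $P_w = P_{w^J}\,P_{w_J}$ need not exist aligned with the positions of an arbitrary embedded $3412$ or $4231$ pattern, and even when it does, each factor is palindromic exactly when the product is, so ``the non-palindromic factor persists'' is not automatic and would require a real argument. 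None of this affects the paper, which uses the lemma as a black box, but if you intend your sketch to be more than a pointer you should replace the (c)~$\Rightarrow$~(a) paragraph with a reference to (or reconstruction of) Carrell--Peterson's actual argument via the tangent-space / Bruhat-graph criterion.
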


\subsection{New results}
In Section~\ref{sec:AOvsRP2}, we continue the study of the relationships
between the objects in Theorem~\ref{thm:APGrass} when $w$ is allowed to be an arbitrary
permutation. Our first result is a three-way equality involving a
suitable generalization of percentage-avoiding fillings. 
\begin{mainthm}
Given any permutation $w$ in $\Sn{n}$, the following are equal: the number $\AO(G_w)$
of acyclic orientations of the inversion graph $G_w$,  
 the number $\RP(\RSW_w)$ of placements of $n$ non-attacking rooks on the
  complement of the SW diagram $\RSW_w$, and the number of ``pseudo-percentage-avoiding fillings'' of the SE diagram $\RSE_w$ of $w$.
\end{mainthm}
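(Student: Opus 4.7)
The plan is to pivot on the set of rook placements and establish bijections on both sides.  A unifying observation is that cells of $\RSE_w$ correspond bijectively to inversions of $w$, hence to edges of $G_w$: the cell $(i, w_j)\in\RSE_w$ is the inversion $(i,j)$, which is the edge $\{i,j\}$ of $G_w$.  Thus a $\{0,1\}$-filling of $\RSE_w$ is already the data of an orientation of $G_w$, and the three objects of the theorem correspond to orientations satisfying a local avoidance (the fillings), acyclic orientations, and certain permutations.

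For the equality of pseudo-percentage-avoiding fillings with $\AO(G_w)$, I would define the avoidance condition so that the forbidden local patterns of a filling are exactly those whose corresponding orientation of $G_w$ contains a directed cycle.  In the Grassmannian case, the Reiner--Shimozono $2\times 2$ patterns achieve this for Ferrers shapes; the pseudo-version should extend the idea to the irregularly shaped $\RSE_w$, so that by design the pseudo-percentage-avoiding fillings are in bijection with the acyclic orientations.

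For the equality $\AO(G_w)=\RP(\RSW_w)$, I would first try a direct map: send $\sigma$ (a rook placement avoiding $\RSW_w$) to the orientation of $G_w$ with $i\to j$ whenever $\sigma^{-1}(w_i)<\sigma^{-1}(w_j)$ on each edge $\{i,j\}$; acyclicity is forced by the strict inequality.  This map is many-to-one on all of $\Sn{n}$ but should become a bijection when restricted to $\RSW_w$-avoiders, with the inverse produced by a greedy ``smallest source first'' algorithm tuned to the inversion structure of $w$ so that the output permutation provably avoids $\RSW_w$.  An alternative that may be cleaner is to match both $\AO(G_w) = (-1)^n\chi_{G_w}(-1)$ (Stanley's reciprocity) and $\RP(\RSW_w) = \sum_k (-1)^k r_k(\RSW_w)(n-k)!$ (rook sieve) to a common expression and verify term-by-term agreement.

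The main obstacle is that $\RSE_w$ is not a Ferrers shape for non-Grassmannian $w$, so the row/column monotonicity underlying Postnikov's proof of Theorem~\ref{thm:APGrass} fails.  The correct definition of ``pseudo-percentage-avoiding'' must be expressive enough to encode global acyclicity of the induced orientation through local cell-based rules on a jagged diagram, and the greedy inverse in the $\AO$--$\RP$ bijection must be shown always to land in $\RSW_w$-avoiding placements by an invariant argument leveraging the fact that edges of $G_w$ are inversions of $w$.  Establishing these two facts is the heart of the proof.
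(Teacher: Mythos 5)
Your framework for the fillings--orientations correspondence is exactly right, and your second alternative for $\AO(G_w)=\RP(\RSW_w)$ (match Stanley's $\AO = (-1)^n\chi(-1)$ and the Kaplansky--Riordan rook sieve to a common expression) is indeed the route the paper takes in the appendix. But as a blind attempt, both legs of your plan leave out the hard parts.

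For the fillings side: your claim that the pseudo-percentage-avoiding condition makes the bijection with acyclic orientations hold ``by design'' is not correct, and this is the substantive gap. The forbidden patterns are strictly local (a $2\times 2$ rectangle of cells, or three cells of $\RSE_w$ completed by a permutation dot). Under the cell-to-edge dictionary, avoiding them rules out precisely \emph{alternating $4$-cycles} and \emph{$3$-cycles} in the induced orientation of $G_w$, but a priori says nothing about longer directed cycles. The real theorem one must prove is: an orientation of an \emph{inversion} graph is acyclic if and only if it has no $3$-cycle and no alternating $4$-cycle. This is false for general graphs, so it requires exploiting the structure of inversion graphs, namely that $(i)$ if $\{i,j\}$ and $\{j,k\}$ are edges with $i<j<k$ then so is $\{i,k\}$, and $(ii)$ if $\{i,k\}$ is an edge and $i<j<k$ then at least one of $\{i,j\},\{j,k\}$ is an edge (Remark~\ref{remark:edgesinvg}). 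One then shows any directed cycle of length $\ge 4$ that is not an alternating $4$-cycle must contain a chord, and hence a strictly shorter directed cycle; the alternating case of length $\ge 6$ needs a specific case analysis on the leftmost vertex. Your proposal correctly flags that ``local cell-based rules'' must encode ``global acyclicity,'' but does not supply the lemma that makes this true.

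For $\AO(G_w)=\RP(\RSW_w)$: your ``common expression'' is the chromatic polynomial, and the identity to prove is $\chi_{G_w}(t) = \sum_{i=0}^n r_{n-i}(\RSW_w)\, t(t-1)\cdots(t-i+1)$, after which setting $t=-1$ finishes. The verification is not a mechanical term-by-term check; it requires a bijection between $k$-rook placements on $\RSW_w$ and partitions of $[n]$ into $n-k$ independent sets of $G_w$. The paper gets this by observing that rooks on $\RSW_w$ are exactly noninversions of $w$ (edges of $\overline{G_w}$), that a non-attacking placement corresponds to an edge set in which no vertex has two smaller or two larger neighbours (an $n$-spine), and that transitive closure of such a spine gives the independent-set partition, using closure of $\overline{G_w}$ under $i<j<k$ transitivity. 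Your first proposed route (a direct many-to-one map from rook placements to orientations, inverted by a greedy ``smallest source first'' scheme) is genuinely different from what the paper does; the paper explicitly leaves the existence of a nonrecursive explicit bijection as an open question (Section~\ref{sec:bijective proofs}), so this would be a real contribution if carried out, but as written there is no argument that the greedy inverse lands in $\RSW_w$-avoiding placements.
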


One equality is proved in Section 2 and the other equality is proved
by Axel Hultman
in Appendix~\ref{sec:AOvsRP}. An immediate consequence of Theorem~\ref{thm:HLSS} and
Theorem~\ref{theorem:AO-RA} is that the number $\RP(\RSW_w)$ of rook placements has the same relation with
$\#[\id,w]$ as the number $\AO(G_w)$ of acyclic orientations (see
Corollary~\ref{corollary:RP-Bru}). 

In Section~\ref{sec:poinmat}, we study relations among
$q$-analogues of the objects described above. The first is the Poincar\'e polynomial $P_w(q)$, defined in the previous section, which is the natural $q$-analogue of the size $\#[\id, w]$ of the Bruhat interval below $w$.  
The other is a natural $q$-analogue of the rook placements avoiding the SW diagram of $w$.
\begin{definition}
Let $\F_q$ be the
  finite field with $q$ elements. Define $\mat_w(q)$ to be the number of
  $n\times n$ invertible matrices over $\F_q$  whose nonzero entries are in
  $\overline{\RSW_w}$. 
\end{definition}

It was shown in \cite[Prop.~5.1]{LLMPSZ} that $\m_w(q) := \mat_w(q)/(q-1)^n$ is an
enumerative $q$-analogue of $\RP(\RSW_w)$, in the sense that
\begin{equation} \label{eq:modq}
\m_w(q) \equiv \RP(\RSW_w) \; (=\AO(G_w)) \pmod{q-1}.
\end{equation}

Remarkably, the equality condition between $\m_w(q)$ and (an
appropriately rescaled version of) $P_w(q)$ is precisely the same as
between their values at $q = 1$ (as in Theorem~\ref{thm:HLSS}).  This settles part of a conjecture  \cite[Conj.\ 6.6]{KLM} of Klein and the present authors.

\begin{mainthmmat}
Let $w$ be a permutation in $\Sn{n}$. Then $\m_w(q) = q^{\binom{n}{2}+\inv(w)}P_{w}(q^{-1})$ if and only if $w$ avoids the
patterns $4231$, $35142$, $42513$, and $351624$.
\end{mainthmmat}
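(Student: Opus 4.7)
The forward direction follows immediately by specializing at $q=1$: the left-hand side becomes $\m_w(1)=\RP(O_w)=\AO(G_w)$ via \eqref{eq:modq} and Theorem~\ref{theorem:AO-RA}, and the right-hand side becomes $P_w(1)=\#[\id,w]$, so the identity at $q=1$ reduces to $\AO(G_w)=\#[\id,w]$, which by Theorem~\ref{thm:HLSS} is exactly the condition that $w$ avoids $4231$, $35142$, $42513$, and $351624$.

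For the reverse direction, suppose $w$ is Gasharov--Reiner. The claim is equivalent to the polynomial identity
\[
\mat_w(q) \;=\; (q-1)^n \sum_{u\preceq w} q^{\binom{n}{2}+\inv(w)-\ell(u)},
\]
which I plan to prove by induction on $\ell(w)$. The base case $w=\id$ is immediate: $\overline{O_{\id}}$ is the lower triangle including the diagonal, so $\mat_{\id}(q)=(q-1)^n q^{\binom{n}{2}}$, matching the right-hand side. For the inductive step, I would locate a cover relation $ws\prec w$ with $s=s_k$ a simple reflection preserving the Gasharov--Reiner property, and show that both sides obey the same recursion under this move: on the Bruhat side, the interval $[\id,w]$ decomposes as $[\id,ws]\sqcup([\id,ws]\cdot s)$, giving a clean recursion for $P_w(q)$; on the matrix side, the enlargement of $\overline{O_w}$ caused by removing the simple reflection $s$ should multiply $\mat_{ws}(q)$ by a matching factor.

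The main obstacle is verifying this matching recursion on the matrix side, since $\overline{O_w}$ is generally not a Ferrers or other classically tractable shape, and $\mat_w(q)$ need not factor in obvious ways. I expect the Gasharov--Reiner hypothesis to enforce enough local structure near the removed reflection that the new cells of $\overline{O_w}$ and the old support decouple cleanly; the four forbidden patterns would each correspond to a configuration in which this decoupling fails, inflating a term by an extra free parameter and breaking the identity. An alternative route is direct: use Gasharov--Reiner's structural theorem \cite{GasharovReiner} to obtain a closed product formula for $P_w(q)$, and match it termwise with an analogous factorization of $\mat_w(q)$ obtained by Gaussian elimination on matrices supported on $\overline{O_w}$. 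In either approach the forward direction already excludes non-Gasharov--Reiner $w$, so only the positive verification inside the Gasharov--Reiner class remains.
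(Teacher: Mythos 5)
Your argument for the ``only if'' direction is the same as the paper's (specialize at $q=1$ via \eqref{eq:modq} and invoke Corollary~\ref{corollary:RP-Bru}), and it is fine. But the plan for the ``if'' direction contains a genuine error at its core. The decomposition $[\id,w] = [\id,ws]\sqcup\bigl([\id,ws]\cdot s\bigr)$ for a right cover $ws \prec w$ is false: the lifting property only gives $[\id,w] = [\id,ws]\cup\bigl([\id,ws]\cdot s\bigr)$, and the overlap is nonempty and has no simple description. If the disjoint decomposition held, one would get $P_w(q) = (1+q)\,P_{ws}(q)$ (up to a power of $q$), hence by induction a product of terms $(1+q^{a_i})$; in particular $P_w$ would be palindromic, forcing $w$ to be smooth by Lemma~\ref{lemma:CP}. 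That already fails for $w=3412$, which is Gasharov--Reiner but not smooth, with $P_{3412}(q)=1+3q+5q^2+4q^3+q^4$ not palindromic. The same obstruction undermines the ``alternative route'' via a product formula for $P_w$: no such closed product exists for general Gasharov--Reiner $w$, only for smooth $w$.

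The paper's actual proof circumvents this by not inducting on $\ell(w)$ within $\Sn{n}$ alone. Instead it uses the Hultman--Linusson--Shareshian--Sj\"ostrand notion of \emph{light} and \emph{heavy reduction pairs} (Definition~\ref{definition of reduction pairs}, Proposition~\ref{prop:alwaysredpair}), which always exist at the first descent of $w$ or $w^{-1}$ when $w\in\GR{n}$. These yield recursions that mix a left multiplication $w\mapsto s_iw$ in $\Sn{n}$ with deletions $w\mapsto w-x$, $w\mapsto w-y$ that drop to $\Sn{n-1}$ (Propositions~\ref{prop:HRP} and~\ref{prop:LRP}); the Poincar\'e-polynomial side uses Sj\"ostrand's right-hull theorem to keep track of length shifts, and the matrix side matches via careful Gaussian elimination (Proposition~\ref{prop:qelim} and Lemma~\ref{lem:qdiffmat}). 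To salvage your plan you would need to replace the clean-but-false Bruhat decomposition with this more delicate two-parameter recursion and verify the matching recursion for $\m_w(q)$ in each case.
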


In Section~\ref{further remarks}, we give a large number of open
questions and other remarks.  Notably, in Sections~\ref{sec:fillings}
and~\ref{sec:pseudofillings}, we study additional relatives of
Postnikov's ``$\sLe$-diagrams''  and their relations with Bruhat intervals
(see Conjecture~\ref{conj:pseudofillpoin}) and acyclic orientations.  Our results include the following.
\begin{corLe}
If $w$ avoids $321$ then the number of $\sGamma$-diagrams on $\RSE_w$ is equal to the number of acyclic
orientations of the inversion graph of $w$.
\end{corLe}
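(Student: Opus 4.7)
The plan is to invoke Theorem~\ref{theorem:AO-RA} to reduce the claim to an enumerative comparison between two kinds of fillings of $\RSE_w$. That theorem identifies $\AO(G_w)$ with the number of pseudo-percentage-avoiding fillings of $\RSE_w$, so it suffices to show that when $w$ avoids $321$ the $\sGamma$-diagrams on $\RSE_w$ are equinumerous with the pseudo-percentage-avoiding fillings of $\RSE_w$.

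The first step is a structural observation about $\RSE_w$ for $321$-avoiders. If $w$ contains a $321$ pattern at $i<j<k$ with $w_i>w_j>w_k$, then the three cells $(i,w_k)$, $(i,w_j)$, $(j,w_k)$ all lie in $\RSE_w$ and form an axis-aligned $\Gamma$-shape whose missing corner $(j,w_j)$, being a one-position of the permutation matrix, is not in $\RSE_w$. A direct check establishes the converse, so $w$ avoids $321$ if and only if $\RSE_w$ is closed under the operation that completes any such $\Gamma$-triple to a rectangle. The second step is to exploit this closure to match the two filling conditions: the pseudo-percentage-avoiding condition forbids two $2\times 2$ labelings supported on a rectangle whose four corners lie in $\RSE_w$, while the $\sGamma$-condition forbids certain $\Gamma$-shaped three-cell labelings in $\RSE_w$. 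By the closure property every $\sGamma$-shape in $\RSE_w$ extends to a rectangle whose fourth corner is also in $\RSE_w$, and conversely every such rectangle contains a $\sGamma$-shape, so one may pair each forbidden three-cell pattern with the corresponding four-cell pattern. A cell-by-cell comparison of the allowed $0/1$-labelings then furnishes the desired bijection.

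The main obstacle is this last comparison: it is essential that the specific $0/1$-labelings outlawed by the $\sGamma$- and pseudo-percentage conditions actually align, not merely that their supporting configurations coincide. For general $w$ the two conditions are inequivalent, so $321$-avoidance must enter in an essential way through the closure property. Should the direct pattern analysis prove awkward, a cleaner fallback is to use the closure property to reindex the rows and columns of $\RSE_w$ into a (skew) Young-diagram shape and then invoke the known bijection of Spiridonov and Josuat-Verg\`es between $\sLe$-/$\sGamma$-diagrams and percentage-avoiding fillings of Young diagrams, combined with the identification of pseudo-percentage-avoiding fillings with percentage-avoiding fillings in the Young case.
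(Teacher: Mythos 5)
Your fallback route is exactly the paper's proof: Proposition~\ref{proposition:AO-PAF} gives $\AO(G_w) = \#\{\text{pseudo-percentage-avoiding fillings of } \RSE_w\}$; when $w$ avoids $321$, Proposition~\ref{prop:Ew321} and the discussion preceding it show that $\RSE_w$ has the SE property (so the ``pseudo'' patterns involving a permutation entry can never occur and pseudo-percentage-avoidance reduces to ordinary percentage-avoidance) and that $\RSE_w$ is a skew Young shape; then Spiridonov's and Josuat-Verg\`es's enumerative equivalence between percentage-avoiding fillings and $\sGamma$-fillings of skew shapes finishes the argument. So that branch of your proposal is correct and takes essentially the same route as the paper.

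Your ``main'' route, however, has a genuine gap that is worse than the ``awkwardness'' you flag. You suggest that once the SE property lets you complete each $\sGamma$-triple to a rectangle, ``a cell-by-cell comparison of the allowed $0/1$-labelings then furnishes the desired bijection.'' This cannot work as stated: for a skew Young shape, the set of $\sGamma$-fillings and the set of percentage-avoiding fillings are genuinely different subsets of $\{0,1\}^{\RSE_w}$ — the $\sGamma$-condition constrains triples of cells while percentage-avoidance constrains rectangles, and one can exhibit fillings satisfying one condition but not the other already on small Young diagrams. The two families are merely \emph{equinumerous}, and establishing that equinumeration is the content of the Spiridonov/Josuat-Verg\`es result; it is a nontrivial bijection that alters the filling, not an identification. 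In other words, the $321$-avoidance hypothesis buys you the reduction from ``pseudo'' to ordinary pattern conditions and the skew-shape structure of $\RSE_w$, but the passage from percentage-avoiding fillings to $\sGamma$-fillings still requires citing (or reproving) the bijection; no direct pattern-by-pattern matching supplies it.
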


Supplementary data and code for {\tt Sage} and {\tt Maple} are available
at the website\\
 \url{http://sites.google.com/site/matrixfinitefields/}.

\section{Acyclic orientations, rook placements, and fillings}
\label{sec:AOvsRP2}

The main result of this section is the following:

\begin{theorem}
\label{thm:AO-RP-PAF}
\label{theorem:AO-RA}
Given a permutation $w$ in $\Sn{n}$, the following are equal:
\begin{compactenum}[(i)]
\item the number $\AO(G_w)$ of acyclic orientations of the inversion graph of $w$,
\item the number $\RP(\RSW_w)$ of placements of $n$ non-attacking rooks on the
  complement of the SW diagram $\RSW_w$ of $w$,
\item the number of ``pseudo-percentage-avoiding fillings'' of the SE diagram $\RSE_w$ of $w$.
\end{compactenum}
\end{theorem}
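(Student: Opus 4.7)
The plan is to prove the three-way equality by decoupling it into two separate statements that can be attacked by independent bijections: first $\AO(G_w) = \RP(\RSW_w)$, and then either $\RP(\RSW_w)$ or $\AO(G_w)$ equal to the count of pseudo-percentage-avoiding fillings of $\RSE_w$. Since Theorem~\ref{thm:APGrass} already establishes the equalities in the Grassmannian case, both bijections should specialize to, or at least reprove, the Grassmannian constructions.

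For the equality $\AO(G_w) = \RP(\RSW_w)$, my first attempt would be a direct bijection. A natural reformulation: a rook placement $\sigma$ avoiding $\RSW_w$ corresponds, via $f = w^{-1}\sigma$, to a permutation $f \in \Sn{n}$ with the property that for every $i$, either $f(i) \leq i$ or $\{i, f(i)\}$ is an edge of $G_w$. I would try to turn such data into an acyclic orientation by sweeping $i = 1, \dots, n$ and reading off the orientations of the edges of $G_w$ incident to $i$ from the cycle structure and descents of $f$, checking that the result is acyclic because the ``new'' edges at each step point consistently relative to the sweep. A backup strategy, if no clean bijection emerges, is to verify that both counts obey the same deletion-contraction style recursion: $\AO(G_w)$ via the identity $\AO(G) = (-1)^n \chi_G(-1)$ and deletion-contraction on an edge, and $\RP(\RSW_w)$ via the analogous inclusion-exclusion on a cell of $\RSW_w$, ultimately reducing to a smaller permutation.

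For the equality with pseudo-percentage-avoiding fillings of $\RSE_w$, I would mimic Postnikov's bijection for Grassmannian $w$, which transforms a rook placement on the Ferrers diagram $\RSE_w$ into a filling by a local row/column rule. The challenge for non-Grassmannian $w$ is that $\RSE_w$ is not a Ferrers shape but is broken up by the cells of the permutation matrix. The ``pseudo-percentage-avoiding'' condition is, I expect, designed exactly to compensate for these gaps: one still forbids the two standard $2 \times 2$ patterns on genuine axis-aligned sub-rectangles contained in $\RSE_w$, while auxiliary rules govern broken rectangles whose would-be corners pass through the permutation matrix. Once the definition is pinned down, the bijection should proceed by the same local encoding as in the Grassmannian case, with each cell's value determined by whether the associated row and column rooks sit in prescribed relative positions.

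The main obstacle is crafting the pseudo-percentage-avoiding condition correctly. The naive extension --- forbidding the standard patterns on every $2 \times 2$ axis-aligned rectangle with all four vertices in $\RSE_w$ --- would miss interactions mediated by the permutation matrix cells and give the wrong count for non-Grassmannian $w$. The technical heart of the proof is formulating the condition so that (a) for Grassmannian $w$ it recovers Postnikov's original notion, and (b) for general $w$ it precisely matches the count of rook placements, equivalently the count of acyclic orientations guaranteed by the first bijection. After that, verifying the bijection is a local check at each maximal rectangular subregion of $\RSE_w$.
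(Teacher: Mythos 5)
Your overall decomposition into two sub-equalities matches the paper's structure, and your backup plan for $\AO(G_w) = \RP(\RSW_w)$ — matching deletion--contraction style recursions — is one of the paper's two routes (the extended abstract does exactly this; the appendix instead invokes the Goldman--Joichi--White identity expressing $\chi_{G_w}(t)$ in terms of rook numbers $r_k(\RSW_w)$, after showing that $k$-rook placements on $\RSW_w$ biject with $n$-spine subgraphs of $\overline{G_w}$ and hence with partitions of $[n]$ into $n-k$ independent sets). However, your plan for the pseudo-percentage-avoiding-fillings (PPAF) part has a genuine gap: you propose to build a local rule sending a rook placement on $\overline{\RSW_w}$ to a filling of $\RSE_w$, but this is the wrong pair of objects to try to match and there is no natural local rule of the type you describe.

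The key fact you miss is that the PPAF bijection should go between fillings of $\RSE_w$ and \emph{acyclic orientations of $G_w$}, not rook placements, and that at the set level this bijection is essentially trivial: the cells of $\RSE_w$ are the pairs $(i, w_j)$ with $i < j$ and $w_j < w_i$, which are in canonical bijection with the inversions $(i,j)$ of $w$, which are exactly the edges of $G_w$. So a $\{0,1\}$-filling of $\RSE_w$ \emph{is} an orientation of $G_w$ (say $1$ means ``point right,'' $0$ means ``point left''). All the content is then in characterizing acyclicity: one checks that a percentage pattern in the filling corresponds to an alternating $4$-cycle in the orientation, that the extra ``pseudo'' pattern (using a permutation-matrix entry as a fourth corner) corresponds to a directed $3$-cycle, and — this is the real lemma — that an orientation of an inversion graph contains a directed cycle if and only if it contains a $3$-cycle or an alternating $4$-cycle. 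That lemma uses the two structural properties of inversion graphs (if $\{i,j\}$ and $\{j,k\}$ are edges with $i<j<k$ then so is $\{i,k\}$; and conversely if $\{i,k\}$ is an edge then so is at least one of $\{i,j\},\{j,k\}$) to repeatedly find chords and reduce any cycle to a short one. Without the filling-equals-orientation identification, you have no way to even get started on defining the PPAF condition correctly, which is exactly where your proposal bogs down (``The main obstacle is crafting the pseudo-percentage-avoiding condition correctly'').
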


As a corollary of Theorem~\ref{thm:AO-RP-PAF} and Theorem~\ref{thm:HLSS}, we have the following result.

\begin{cor}
\label{corollary:RP-Bru}
The number of placements of $n$ non-attacking rooks on $\overline{\RSW_w}$ equals
the number of permutations in the Bruhat interval $[\id, w]$ if and only if w avoids $4231$, $35142$, $42513$, and $351624$.
\end{cor}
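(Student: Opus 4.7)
The plan is essentially to chain together the two main results already available in this section. Theorem~\ref{thm:AO-RP-PAF} gives, for every permutation $w \in \Sn{n}$, the equality
\[
\RP(\RSW_w) = \AO(G_w),
\]
with no hypothesis on $w$ at all. Meanwhile, Theorem~\ref{thm:HLSS} of Hultman--Linusson--Shareshian--Sj\"ostrand provides exactly the characterization of when $\AO(G_w)$ matches the size of the Bruhat interval: $\AO(G_w) = \#[\id,w]$ if and only if $w$ avoids $4231$, $35142$, $42513$, and $351624$.

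Substituting the first equality into the second, the statement $\RP(\RSW_w) = \#[\id,w]$ is equivalent to $\AO(G_w) = \#[\id,w]$, which in turn is equivalent to the pattern-avoidance condition. This gives both directions of the ``if and only if'' with no additional work.

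There is no real obstacle here, since the substantive content has been isolated into Theorem~\ref{thm:AO-RP-PAF} (proved in this section and Appendix~\ref{sec:AOvsRP}) and Theorem~\ref{thm:HLSS} (which we are citing). The corollary is therefore just a one-line deduction, and I would write the proof as a single sentence noting the two references and the transitivity of the equality.
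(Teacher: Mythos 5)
Your proposal is correct and matches the paper's reasoning exactly: the paper presents this as an immediate corollary of Theorem~\ref{thm:AO-RP-PAF} (which gives $\RP(\RSW_w) = \AO(G_w)$ unconditionally) and Theorem~\ref{thm:HLSS} (which characterizes when $\AO(G_w) = \#[\id,w]$), with no further argument. Nothing more is needed.
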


The proof of the equality of $\AO(G_w)$ and $\RP(\RSW_w)$ is deferred
to Appendix~\ref{sec:AOvsRP}, where Axel Hultman gives an elegant
proof using some classic results from rook theory.  (An alternative,
longer proof may be found in the extended abstract \cite{LMFPSAC}; see
also Section~\ref{sec:bijective proofs}.) Then in Section~\ref{sec:AOvsPAF} we define the pseudo-percentage-avoiding fillings and complete the proof of Theorem~\ref{thm:AO-RP-PAF}.

\subsection{Bijection between acyclic orientations and pseudo-percentage-avoiding fillings} \label{sec:AOvsPAF}

Recall that $\RSE_w$ is the subset of $[n] \times [n]$ consisting of all pairs $(i, w_j)$ such that $i< j$ and $w_j < w_i$ (see Figure~\ref{fig:exgrass}, center panels).  In this section, we complete the proof of Theorem~\ref{thm:AO-RP-PAF} by establishing the equality of the number $\AO(G_w)$ of acyclic orientations of the inversion graph of $w$ with the number of \emph{pseudo-percentage-avoiding fillings} of the SE diagram $\RSE_w$, which we define now.

\begin{definition}
\label{PPAF definition}
Given a permutation $w$, we say that a filling $A$ of $\RSE_w$ with $0$s and $1$s is a \textbf{pseudo-percentage-avoiding filling} if it satisfies the following conditions:
\begin{compactenum}[(i)]
\item $A$ is percentage-avoiding, i.e., if squares $(i, j)$, $(i', j)$, $(i, j')$ and $(i', j')$ are elements of $\RSE_w$ then we do not have $A_{i,j} = A_{i',j'} = 1$ and $A_{i', j} = A_{i, j'} = 0$, nor do we have $A_{i,j} = A_{i',j'} = 0$ and $A_{i', j} = A_{i, j'} = 1$;
\item if squares $(i, j)$, $(i', j)$ and $(i, j')$ are elements of $\RSE_w$ and square $(i', j')$ is an entry of $w$ (that is, $j' = w_{i'}$) then we do not have $A_{i,j} = 1$ and $A_{i', j} = A_{i, j'} = 0$, nor do we have $A_{i,j} = 0$ and $A_{i', j} = A_{i, j'} = 1$.
\end{compactenum}
These forbidden patterns can be represented
by the images \raisebox{-7pt}{\includegraphics{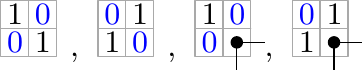}} where the solid dot indicates an
entry of the permutation.
\end{definition}

The main result of this section is the following:
\begin{prop} \label{proposition:AO-PAF}
Given any permutation $w$, the number of pseudo-percentage-avoiding fillings of $\RSE_w$ is equal to the number of acyclic orientations of $G_w$.
\end{prop}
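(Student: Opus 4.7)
The plan is to exhibit a natural bijection $\Phi$ between fillings of $\RSE_w$ by $0$'s and $1$'s and orientations of the inversion graph $G_w$, and then show that $\Phi$ restricts to the desired bijection between pseudo-percentage-avoiding fillings and acyclic orientations. First I would observe that cells of $\RSE_w$ are in natural bijection with inversions of $w$ (and hence with edges of $G_w$) via $(i, w_j) \leftrightarrow \{i,j\}$ for $i<j$ with $w_i > w_j$. Under this identification, $\Phi$ sends a filling $A$ to the orientation in which edge $\{i,j\}$ (with $i<j$) is directed $i \to j$ if $A_{(i, w_j)}=1$ and $j \to i$ if $A_{(i, w_j)}=0$; this is visibly a bijection.

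Next I would decode the two families of forbidden configurations. A type-(ii) configuration, with $j = w_k$ and $j' = w_{i'}$, forces $i < i' < k$ and $w_i > w_{i'} > w_k$, so the positions $\{i, i', k\}$ form a $321$ pattern in $w$; the three cells correspond to the three edges of the triangle on $\{i,i',k\}$ in $G_w$, and I would verify directly that the two forbidden filling patterns translate under $\Phi$ into precisely the two cyclic (non-acyclic) orientations of this triangle. A type-(i) configuration, with $j = w_k$ and $j' = w_{k'}$, produces four vertices $\{i, i', k, k'\}$ spanning a $K_{2,2}$ subgraph of $G_w$; the four cells correspond to its four edges, and the two forbidden fillings translate into the two cyclic orientations of this $4$-cycle. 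The ``only if'' direction then falls out immediately: an acyclic orientation has no directed $3$- or $4$-cycle, so $\Phi(A)$ acyclic implies $A$ is pseudo-percentage-avoiding.

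For the converse, suppose $\Phi(A)$ contains a directed cycle and take a shortest such cycle $C$. Any chord of $C$ in $G_w$ would yield a shorter directed cycle, so $C$ must be an induced cycle in $G_w$. The key structural lemma I must establish is that $G_w$ admits no induced cycle of length $\geq 5$ (equivalently, that permutation graphs are weakly chordal); the idea is that in any such hypothetical cycle the vertex $u$ of smallest position-index would have its two cycle-neighbors at $w$-value below $w_u$ while all remaining cycle-vertices lie at $w$-value above $w_u$, and then chasing these inequalities along the cycle via the inversion criterion forces a $w$-value to be both above and below $w_u$. Granted this, $|C| \in \{3, 4\}$: a triangle in $G_w$ is automatically a $321$ pattern (so a type-(ii) violation), while a short case check on the three possible labelings of an induced $4$-cycle on positions $u_1 < u_2 < u_3 < u_4$ shows that only the labeling $u_1 - u_3 - u_2 - u_4 - u_1$ is compatible with the inversion constraints, and this is precisely a $3412$ pattern (so a type-(i) violation).

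The principal obstacle is the structural lemma on induced cycles in $G_w$ together with the matching of the two cyclic orientations of a triangle (respectively $4$-cycle) with the two forbidden fillings in Definition~\ref{PPAF definition}; both are routine but require careful bookkeeping of position versus value conventions.
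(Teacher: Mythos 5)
Your proposal is correct and its skeleton matches the paper's argument exactly: the same bijection between fillings and orientations, the same decoding of the two forbidden configurations as directed $3$-cycles and directed ($K_{2,2}$-type, equivalently ``alternating'') $4$-cycles, and the same reduction of the hard direction to showing that a shortest directed cycle is chordless of length at most $4$. Where you diverge is only in how that last fact is established: the paper splits into alternating/non-alternating cases and finds chords directly via the two properties in Remark~\ref{remark:edgesinvg} (transitivity gives a chord whenever two consecutive cycle-edges point the same way; betweenness gives a chord in any alternating cycle of length $\ge 6$), whereas you invoke the structural fact that $G_w$ has no induced cycle of length $\ge 5$ and then do a short case analysis on the three labelings of an induced $4$-cycle. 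Both routes prove the same lemma; the paper's is more self-contained, yours states the reusable graph-theoretic fact more cleanly. One small imprecision in your sketch: the contradiction one actually derives in ruling out an induced cycle $c_1\text{--}\cdots\text{--}c_k$ (with $c_1$ of smallest position) is a cyclic chain of \emph{position} inequalities, e.g.\ $c_k < c_3 < c_2 < c_{k-1} < c_k$ coming from the edges $\{c_2,c_3\},\{c_{k-1},c_k\}$ and the non-edges $\{c_2,c_{k-1}\},\{c_3,c_k\}$, not a clash of $w$-values as you describe; the argument still works, but the details need to be written in terms of positions.
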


We will need the following property of inversion graphs, whose (easy) proof is
left to the reader.

\begin{remark}\label{remark:edgesinvg}
Given a permutation $w$ in $\Sn{n}$ with inversion graph $G_w$ and vertices
$1\leq i <j <k \leq n$, we have that
\begin{compactenum}[(i)]
\item if $\{i,j\}$ and $\{j,k\}$ are edges of $G_w$ then
  $\{i,k\}$ is an edge of $G_w$, and
\item if $\{i,k\}$ is an edge of $G_w$ then at least one of $\{i,j\}$ and
  $\{j,k\}$ is an edge of $G_w$.
\end{compactenum}
\end{remark}

\begin{proof}[Proof of Proposition~\ref{proposition:AO-PAF}]
Call a cycle in an orientation of an inversion graph \emph{alternating} if its edges alternate between being directed to the right and to the left.  (In particular, only cycles of even length may be alternating.)

Consider any filling $f$ of $\RSE_w$. Recall that the elements $(i,w_j)$ of $\RSE_w$
are in correspondence with the inversions $(i,j)$ of $w$ and in turn with
the edges $\{i,j\}$
of $G_w$. In the inversion graph $G_w$, direct edges
corresponding to entries filled
with $1$ to the right and edges corresponding to entries filled with $0$ to the left.
One has immediately that $f$ contains a percentage pattern if and only if the corresponding orientation of $G_w$ contains an alternating $4$-cycle, and $f$ contains a pseudo-percentage pattern (extended using an entry of
$w$) if and only if the orientation contains a (directed) $3$-cycle; see \autoref{fig:patts2cycles}. Thus, it suffices to show that an orientation of an inversion graph is acyclic if and only if it contains no $3$-cycles and no alternating $4$-cycles.  One implication is obvious.

\begin{figure}
\begin{center}
\includegraphics{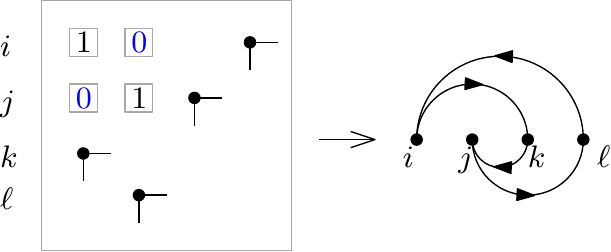}
\qquad  \qquad
\includegraphics{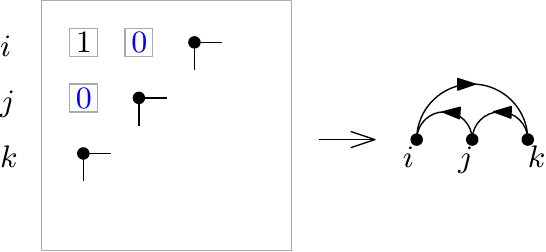}
\caption{Given a filling $f$ of $\RSE_w$ and a cell $(i, w_j) \in \RSE_w$, the edge of $\{i,j\}$ of $G_w$ is oriented to the right if $f(i,w_j) = 1$ and to the left if $f(i, w_j) = 0$. Left: a percentage pattern corresponds to an alternating $4$-cycle.  Right: a pseudo-percentage
  pattern corresponds to a $3$-cycle.}
\label{fig:patts2cycles}
\end{center}
\end{figure}

For the other direction, we wish to show that in every
orientation of $G_w$ with a directed cycle, there is a $3$-cycle or alternating $4$-cycle. 
Choose an orientation of $G_w$ that contains a directed cycle $C$.  We show that if $C$ is not a $3$-cycle or an alternating $4$-cycle then there is a cycle whose length is strictly less than that of $C$; this finishes the proof. 

Suppose that $C$ contains a chord, i.e., there is an edge of $G_w$
joining two vertices in $C$ that is not an edge of $C$.  In this case,
no matter which way one orients the chord, one produces a directed
cycle of strictly shorter length than $C$, as desired. 

Observe that if $C$ is not alternating then it necessarily contains a chord: if there are edges $a\to b$ and $b \to c$ of $C$ with $a < b < c$ or $a > b > c$ then by
Remark~\ref{remark:edgesinvg}(i) the inversion graph contains the edge $\{a, c\}$, a chord of $C$.  So we may suppose that $C$ is alternating and of length at least $6$.

Let $i_0$ be the leftmost vertex of $C$, and write $C = i_0 \to i_1 \to \ldots \to i_{2m - 1} \to i_{2m} = i_0$.  Choose $k > 0$ minimal so that $i_{2k + 2} < i_{2k}$.  From the choice of $k$ and the fact that $C$ is alternating it follows that $i_{2k - 2} < i_{2k} < i_{2k - 1}$ and $i_{2k + 2} < i_{2k} < i_{2k + 1}$.  We have two possibilities: first, if $i_{2k + 2}$ lies between $i_{2k - 2}$ and $i_{2k}$ then it lies between the endpoints of the edge $i_{2k - 2} \to i_{2k - 1}$ and so by Remark~\ref{remark:edgesinvg}(ii) there must be a chord joining $i_{2k + 2}$ to one of $i_{2k - 2}, i_{2k - 1}$.  Alternatively, if $i_{2k + 2} < i_{2k - 2}$ then $i_{2k - 2}$ lies between the endpoints of the edge $i_{2k + 1} \to i_{2k + 2}$ and so there must be a chord joining $i_{2k - 2}$ to one of $i_{2k + 1},  i_{2k + 2}$.
\end{proof}


\section{$q$-analogues of rook placements and Bruhat intervals}

\label{sec:poinmat}

Theorems~\ref{theorem:AO-RA} and~\ref{thm:HLSS} show that the size of
the Bruhat interval $[\id, w]$ is equal to the number $\RP(\RSW_w)$ of
rook placements avoiding the south-west diagram $\RSW_w$ if and only
if $w$ avoids the permutation patterns $4231$, $35142$, $42513$,
$351624$.  In this section, we study a natural $q$-analogue of this result, using a recursive analysis based on that of \cite{HLSS}.

The analogue of $\#[\id,w]$ that we consider is the {\bf Poincar\'e polynomial} 
\[
P_w(t):=\sum_{u\preceq w} t^{\ell(u)},
\] 
where the order relation in the sum is the strong Bruhat order.  The
analogues $\m_w(q)$ of the number $\RP(\RSW_w)$ of rook placements that we
consider are the {\bf matrix counting function} and the {\bf
  normalized matrix-counting function} defined by
\[
\mat_w(q) := \#\M(n,\RSW_w) \quad \text{and} \quad \m_w(q) := \frac{1}{(q - 1)^n} \cdot \mat_w(q),
\]
where 
\[
\M(n, \RSW_w) := \{ n \times n \textrm{ invertible matrices over } \F_q \text{ with nonzero entries restricted to } \overline{\RSW_w}\}
\]
and $\F_q$ is the finite field with $q$ elements.\footnote{One could alternatively view $\m_w(q)$ as counting orbits $T\backslash \M(n, \RSW_w)$ of matrices under the action of the (split maximal) torus $T$ of diagonal matrices in $\GL_n(\F_q)$, and indeed all of our proofs could be rephrased in this context.}  Equation
\eqref{eq:modq} shows that these are indeed $q$-analogues of $\RP(\RSW_w)$.

The main result of this section, answering part of the conjecture \cite[Conj.~6.6]{KLM}, is the following.

\begin{theorem}
\label{thm:matvspoin}
Let $w$ be a permutation in $\Sn{n}$. Then 
\begin{equation}
\label{desired q-equation}
\m_w(q) = q^{\binom{n}{2}+\inv(w)}P_{w}(q^{-1})
\end{equation}
if and only if $w$ avoids the
patterns $4231$, $35142$, $42513$, and $351624$.
\end{theorem}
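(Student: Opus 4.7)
The proof splits into two implications, and the easier one is almost immediate. For the backward direction (necessity of pattern-avoidance), I would evaluate \eqref{desired q-equation} at $q = 1$: by equation~\eqref{eq:modq} and Theorem~\ref{theorem:AO-RA} the left side becomes $\m_w(1) = \RP(\RSW_w) = \AO(G_w)$, while the right side becomes $P_w(1) = \#[\id, w]$. So the polynomial identity forces $\AO(G_w) = \#[\id, w]$, and by Theorem~\ref{thm:HLSS} this equality holds only when $w$ avoids $4231$, $35142$, $42513$, and $351624$.

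For the forward direction (sufficiency of pattern-avoidance) I would induct on $\ell(w)$. The base case $w = \id$ is direct: $\overline{\RSW_{\id}}$ is the weak lower-triangular region $\{(i,j) : i \geq j\}$, so the invertible matrices supported there are exactly the invertible lower-triangular matrices, giving $\mat_{\id}(q) = (q-1)^n q^{\binom{n}{2}}$ and hence $\m_{\id}(q) = q^{\binom{n}{2}} = q^{\binom{n}{2}} P_{\id}(q^{-1})$. For the inductive step I would adapt the recursive analysis of \cite{HLSS}: every non-identity Gasharov--Reiner permutation $w$ admits a simple reflection $s$ with $sw \prec w$ such that $sw$ is still Gasharov--Reiner. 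The Bruhat lifting property, pairing each $u \preceq w$ with $su$, produces the standard ``descent'' recursion expressing $P_w(q)$ in terms of $P_{sw}(q)$ and a correction indexed by certain $u \in [\id, sw]$; under the pattern-avoidance hypothesis this correction, following \cite{HLSS}, reduces to a Poincar\'e polynomial of a strictly smaller Bruhat interval. I would then establish an analogous recursion for $\mat_w(q)$ by conditioning on the entries of the matrix in the cells of $\overline{\RSW_w} \triangle \overline{\RSW_{sw}}$ (the two columns where the diagrams differ), performing an elementary row/column operation in $\GL_n(\F_q)$, and re-expressing the conditional counts as values of $\mat_{sw}(q)$ and of a smaller $\mat_{w''}(q)$. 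Invoking the inductive hypothesis and keeping track of the $(q-1)^n$ normalization and the $q^{\binom{n}{2}+\inv(w)}$ shift, the two recursions would agree and yield \eqref{desired q-equation} for $w$.

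The main obstacle is making the matrix-side recursion close with the correct polynomial coefficients. The Bruhat recursion is formal and works for any descent; pattern-avoidance only enters to control the correction term. On the matrix side, however, the support $\overline{\RSW_w}$ must decompose in a way compatible with both the invertibility constraint and the action of elementary matrices, and this is precisely where the structural features of Gasharov--Reiner permutations (such as the chordal / perfect-elimination-ordering properties of their inversion graphs that underlie the \cite{HLSS} argument) are needed. Matching the $q$-exponents step by step, in particular accounting for $\inv(w) - \inv(sw) = 1$ and verifying that this shift appears symmetrically on both sides of the inductive identity, is the most delicate bookkeeping and is where I expect the bulk of the work to lie.
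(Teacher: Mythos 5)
Your ``only if'' argument agrees with the paper's: under the hypothesis $\m_w(q)$ is a polynomial, so the congruence \eqref{eq:modq} gives $\m_w(1)=\RP(\RSW_w)=\AO(G_w)$ (using Theorem~\ref{theorem:AO-RA}), the right side of \eqref{desired q-equation} gives $\#[\id,w]$ at $q=1$, and Theorem~\ref{thm:HLSS} (via Corollary~\ref{corollary:RP-Bru}) forces pattern-avoidance.

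For the ``if'' direction you are pointing in the right direction --- induction following the recursive structure of \cite{HLSS} --- but the proposal glosses over exactly the steps that carry the content, and as written there is a genuine gap. The structural input is not merely ``a descent $s$ with $sw\prec w$ and $sw$ still Gasharov--Reiner'': it is \cite[Prop.~5.6]{HLSS}, asserting that the first descent of $w$ or of $w^{-1}$ is a \emph{reduction pair}, and these come in two distinct flavors (light and heavy) that yield genuinely different recursions. Your description of the Bruhat correction as ``a Poincar\'e polynomial of a strictly smaller interval'' only fits the light case \eqref{LRP prop equation for Poincare}; the heavy case \eqref{eq:Poincare HRP recursion} is a four-term inclusion--exclusion involving $w-x$, $w-y$, and $w-x-y$. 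On the matrix side your sketch is too loose: for a light pair the diagrams $\RSW_w$ and $\RSW_{s_iw}$ differ in a \emph{single} cell $(i,w_i)$, not two columns, and the recursion \eqref{LRP prop equation for matrices} rests on the cancellation Lemma~\ref{lem:qdiffmat} together with a nontrivial count $N(B)$ of rank-$(n-1)$ border extensions; for a heavy pair the recursion \eqref{eq:matrix HRP recursion} produces a \emph{new} permutation $v(w)$ (not a subpermutation of $w$) via Gaussian elimination at the cell $(i+1,w_i)$, and when $i>j$ one must recurse once more through the light case applied to $v(w)$ (Proposition~\ref{prop:properties of v}). You also need to induct on both $n$ and $\inv(w)$ simultaneously --- the reductions $w\mapsto w-y$, $w\mapsto v(w)$ decrease $n$ --- and occasionally pass to $w^{-1}$ using $P_w=P_{w^{-1}}$ and $\m_w=\m_{w^{-1}}$. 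Until the light/heavy split and the two matrix-side recursions are made precise, the inductive step does not close.
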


 The proof of the ``only if'' part of Theorem~\ref{thm:matvspoin} is
as follows: by Equation~\eqref{eq:modq} we have that $\m_w(1) \equiv
\RP(\RSW_w) \pmod{q-1}$,
while by the definition of $P_w(q)$ we have that $q^{\binom{n}{2}+\inv(w)}P_{w}(q^{-1}) \mid_{q=1} =
\#[\id,w]$. If $w$ contains one of the patterns $4231$, $35142$,
$42513$, or $351624$ then $\RP(\RSW_w) \neq \#[\id,w]$ by Corollary~\ref{corollary:RP-Bru}.  Therefore for such $w$ the expressions $\m_w(q)$ and $q^{\binom{n}{2}+\inv(w)}P_{w}(q^{-1})$ cannot be equal for sufficiently large $q$.

The ``if'' part of the proof of Theorem~\ref{thm:matvspoin} is shown by induction,
and the rest of this section is devoted to its proof. Let $\GR{n}$ be the set of permutations $w$ in $\Sn{n}$ avoiding the
patterns $4231$, $35142$, $42513$, and $351624$, i.e., the
Gasharov--Reiner permutations. In \cite[\S 5]{HLSS}, the authors define two special kinds of descents called \emph{heavy} and \emph{light reduction pairs}.  We recall their definition here.

\begin{definition}
\label{definition of reduction pairs}
Suppose $w$ is a permutation with a descent formed by the entries $y=(i,w_i)$ and $x=(i+1,w_{i+1})$.
We call this descent a
{\bf light reduction pair} if 
\begin{compactitem}
\item there is no entry $(j,w_j)$ with $j<i$ and $w_j>w_{i}$, and
\item there is no entry $(j,w_j)$ with $j>i+1$ and $w_{i+1} < w_j < w_{i}$.
\end{compactitem}
This is illustrated in Figure~\ref{light}.
We call this descent a
{\bf heavy reduction pair} if
\begin{compactitem}
\item there is no entry $(j,w_j)$ with $j>i+1$ and $w_j<w_{i+1}$,
\item there is no entry $(j,w_j)$ with $j<i$ and $w_j>w_{i}$, and
\item there is an index $k$ with $w_i\leq k \leq w_{i+1}$ such
    that there is no entry $(j,w_j)$ with $j<i$ and $w_{i+1}<w_j\leq k$ or with
  $j>i+1$ and $k<w_j<w_{i}$. 
\end{compactitem}
This is illustrated in Figure~\ref{heavy}.  
\end{definition}

\begin{figure}
\begin{center}
\subfigure[]{
\includegraphics{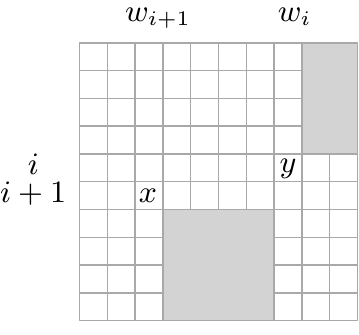}
\label{light}
}
\quad
\subfigure[]{
\includegraphics{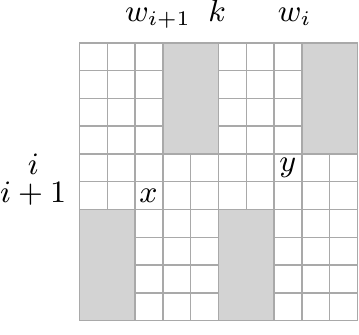}
\label{heavy}
}
\caption{(a) A light reduction pair, and (b) a heavy reduction pair. The
  gray areas have no entries $(j,w_j)$ of $w$.}
\label{redpairs}
\end{center}
\end{figure}

In \cite[Prop.\ 5.6]{HLSS}, it was shown that one can always find a reduction
pair in a permutation $w$ in $\GR{n}$.  
\begin{prop}[{\cite[Prop.\ 5.6]{HLSS}}] 
\label{prop:alwaysredpair}
Let $w\neq \iota$ be in $\GR{n}$. Then either the first descent of $w$ or the first descent of $w^{-1}$ is a reduction pair.

\end{prop}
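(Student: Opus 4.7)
I argue by contradiction. Suppose $w \in \GR{n}$ satisfies $w \neq \iota$ and that neither the first descent of $w$ nor the first descent of $w^{-1}$ is a reduction pair. Note that $\GR{n}$ is closed under inversion, since each of the four forbidden patterns is an involution; thus $w^{-1} \in \GR{n}$ as well, and to reach a contradiction it suffices to exhibit one of the patterns $4231$, $35142$, $42513$, $351624$ in either $w$ or $w^{-1}$.

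Let $i$ be the position of the first descent of $w$, so $w_1 < \cdots < w_i$ and $w_i > w_{i+1}$. Let $a$ be the smallest positive integer such that $a+1$ lies to the left of $a$ in $w$; equivalently, $a$ is the position of the first descent of $w^{-1}$, and the positions of $1, 2, \ldots, a$ in $w$ are strictly increasing while the position of $a+1$ is less than that of $a$. The ``upper-left'' clause of the light/heavy reduction pair definitions (``no $j < i$ with $w_j > w_i$'') holds automatically for the first descent of $w$, because $w_1, \ldots, w_{i-1}$ are all smaller than $w_i$; the same reasoning applied to $w^{-1}$ shows the analogous clause is automatic for the first descent of $w^{-1}$. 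Consequently, failure of the light condition at the first descent of $w$ reduces to \textbf{(A)}: there exists $j > i+1$ with $w_{i+1} < w_j < w_i$; and failure of the light condition at the first descent of $w^{-1}$ reduces to \textbf{(B)}: there exists a value $b > a+1$ whose position in $w$ lies strictly between the positions of $a+1$ and $a$.

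Configuration (A) is a $312$-subpattern on the indices $i, i+1, j$, while (B) is a $231$-subpattern on the positions of $a+1, b, a$. I would next unpack the failure of the heavy condition at the first descent of $w$, which, together with (A), provides either \textbf{(i)} an entry of value less than $w_{i+1}$ at a position greater than $i+1$, or \textbf{(ii)} for every $k \in [w_{i+1}, w_i]$ an obstructing entry blocking $k$ from being a separating value. The natural choice $k = w_j$ in (ii) forces either an index $j' < i$ with $w_{i+1} < w_{j'} \le w_j$ or an index $j'' > i+1$ with $w_j < w_{j''} < w_i$; varying $k$ yields further obstructions. A symmetric analysis handles the heavy failure at the first descent of $w^{-1}$. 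The plan is then to combine the entries provided by (A), (B), and these heavy obstructions---using the increasing initial segment $w_1 < \cdots < w_i$ and the transitivity observations of Remark~\ref{remark:edgesinvg}---to realize an explicit copy of $4231$, $35142$, $42513$, or $351624$ in $w$ or in $w^{-1}$.

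The principal obstacle is the bookkeeping of the case analysis. The heavy condition contains an existential quantifier over $k \in [w_{i+1}, w_i]$, so its negation yields different families of obstructing entries depending on how (A), (B), and the heavy constraints interact. I would organize the proof by first splitting on the type of heavy obstruction ((i) or (ii) with a well-chosen $k$) on each of the two sides, and then verifying in each subcase that the accumulated entries form a copy of one of the four forbidden patterns; the inversion symmetry roughly halves the number of essentially distinct subcases, and the automatic initial clauses keep the remaining casework manageable.
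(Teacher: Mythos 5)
The paper does not prove this statement itself: it is quoted verbatim from Hultman--Linusson--Shareshian--Sj\"ostrand (their Proposition~5.6), and the present paper simply cites that result. So the only question is whether your attempt is a complete proof on its own terms.

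Your setup is sound: $\GR{n}$ is closed under inversion (each of the four patterns is its own inverse), the ``no $j<i$ with $w_j>w_i$'' clause of both reduction-pair definitions is automatic at the \emph{first} descent since $w_1<\cdots<w_i$, and you correctly unpack the failure of the light condition at the first descent of $w$ (your condition (A), a $312$-instance) and at the first descent of $w^{-1}$ (your (B), a $231$-instance). This matches the contrapositive strategy that HLSS actually use.

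However, what you have written is a plan, not a proof. The third and fourth paragraphs are declarations of intent (``I would next unpack\ldots,'' ``The plan is then to combine\ldots,'' ``I would organize the proof by\ldots'') rather than arguments. The entire substance of the proposition is the case analysis: negating the heavy condition gives you, for every $k\in[w_{i+1},w_i]$, an obstructing entry on one side or the other, and one must actually show how the entries supplied by (A), (B), the heavy obstructions on the $w$ side, and the heavy obstructions on the $w^{-1}$ side assemble into a copy of $4231$, $35142$, $42513$, or $351624$. That assembly is never carried out; you assert that it works ``in each subcase'' without exhibiting the subcases or the pattern occurrences. In particular there is no verification that the obstructions you choose are simultaneously realizable (the existential over $k$ in the heavy condition interacts nontrivially with the choice of witnesses in (A) and (B)), nor that the resulting configuration really is order-isomorphic to one of the four patterns rather than to some other pattern. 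Until that combinatorial work is done, this is a gap: the correct high-level strategy has been identified, but the proof itself is missing.
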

Further, Hultman \emph{et al.} gave recursions for the size of the Bruhat
interval below Gasharov--Reiner permutations using the structure
imposed by the reduction pairs.  In the following sections, we extend
this work by giving recursions for the Poincar\'e polynomial and matrix
counting function of Gasharov--Reiner permutations.  
Thus, we will establish by induction that the Poincar\'e
polynomials and matrix counts are essentially equal in this case.

\subsection{Recursions for permutations with heavy reduction pairs}

In this section, we consider the case that the first descent of $w$ is
a heavy reduction pair.  In order to introduce our result, we must introduce some notation.
Given a permutation $w = w_1 \cdots w_n$ whose first descent is a
heavy reduction pair in position $i$, let $j$ be minimal such that
$w_j > w_{i + 1}$ and define $v = v(w)$ to be the permutation in
$\Sn{n - 1}$ that satisfies the order-isomorphism 
\begin{equation}
\label{definition of v}
v(w)\,\,\cong\,\, w_1 \, w_2 \, \cdots \, w_{j - 1} 
\quad
w_{j + 1} \, w_{j + 2} \, \cdots \, w_{i}
\quad w_j \quad   
w_{i + 2} \, w_{i + 3} \, \cdots \, w_{n}.
\end{equation} 
\begin{figure}
\begin{center}
\includegraphics[scale=1.25]{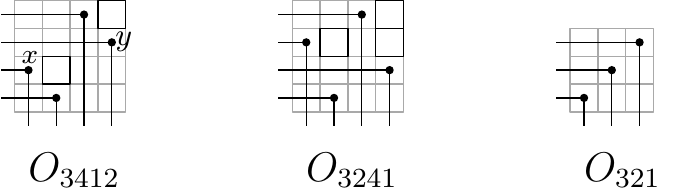}
\caption{South-west diagrams of 
$w=3412$, $s_2w=3142$, $v(w)=321$.}
\label{fig:O_125634}
\end{center}
\end{figure}
(The crucial properties of $v$ for our discussion are proved in
Propositions~\ref{prop:v and w/e have the same diagram}
and~\ref{prop:properties of v} below.)  In addition, we will make repeated use of the following operation on permutations.
\begin{definition}[Deletion in permutations and diagrams]
\label{def:deletion}
Suppose that $w = w_1 \cdots w_n \in \Sn{n}$ is a permutation and $y = (i, w_i)$ is an entry of $w$.  Then the result of \textbf{deleting} $y$ from $w$ is the permutation $w - y$ in $\Sn{n - 1}$ order-isomorphic to $w_1 \cdots w_{i - 1} w_{i + 1} \cdots w_n$.  

Similarly, for a diagram $D \subset [n] \times [n]$ and a pair $(i, j) \in [n] \times [n]$, {\bf deleting} $(i, j)$ from $D$ yields the diagram $D - (i, j) \subset [n - 1] \times [n - 1]$ that results from removing the $i$th row and $j$th column from $D$, and reindexing rows and columns as necessary.  (The two definitions can be easily seen to agree in the case that $D$ is a diagram with one entry in each row and column, $w$ is the associated permutation, and $y$ is an element of $D$.)
\end{definition}

The main result of this section is the following:

\begin{prop} \label{prop:HRP}
Let $w$ be in $\GR{n}$.  If the first descent of $w$, involving the entries $y = (i, w_i)$ and $x=(i+1,w_{i + 1})$, is a heavy reduction pair then 
\begin{equation}
\label{eq:Poincare HRP recursion}
P_w(t) = P_{s_i w}(t) + t^{\inv(w) - \inv(w - x)} P_{w - x}(t) + t^{\inv(w) - \inv(w - y)}P_{w - y}(t) - t^{\inv(w) - \inv(w - x - y)}P_{w - x - y}(t)
\end{equation}
and
\begin{equation}\label{eq:matrix HRP recursion}
\m_w(q) = \m_{s_iw}(q)+q^{n}\m_{v}(q),
\end{equation}
where $v$ is as in \eqref{definition of v}.
\end{prop}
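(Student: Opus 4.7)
The plan is to prove the two recursions \eqref{eq:Poincare HRP recursion} and \eqref{eq:matrix HRP recursion} independently, each leveraging the structural features of heavy reduction pairs established in \cite{HLSS}.

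For the Poincar\'e polynomial, the starting point is the standard Bruhat decomposition via the descent at $i$: since $s_iw \prec w$, we may write $P_w(t) = P_{s_iw}(t) + \Delta_w(t)$, where $\Delta_w(t)$ is the generating function of those $u \preceq w$ with $u \not\preceq s_iw$. It remains to identify $\Delta_w(t)$ with the final three terms of \eqref{eq:Poincare HRP recursion}. The heavy reduction pair hypothesis (Definition~\ref{definition of reduction pairs} and Figure~\ref{redpairs}) rigidly constrains such $u$: the empty gray regions force the entries of $u$ in rows $i,i+1$ and columns $w_i,w_{i+1}$ to behave in a restricted fashion. I would then apply inclusion--exclusion on whether $u$ ``preserves'' the entry $x$ or the entry $y$ in its natural sense, writing $\Delta_w(t)$ as a signed sum of generating functions over $[\id, w-x]$, $[\id, w-y]$, and $[\id, w-x-y]$. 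The shift exponents $\inv(w) - \inv(w')$ appear because each inversion of $w$ involving a deleted entry contributes to $\ell(u)$ but not to the length statistic on the truncated interval, so multiplication by $t^{\inv(w) - \inv(w')}$ realigns the length grading.

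For the matrix recursion, I would partition $\M(n, \RSW_w)$ according to whether the entry $M_{i+1, w_i}$, which lies in $\overline{\RSW_w}$, vanishes. When $M_{i+1, w_i} = 0$, the support of $M$ is contained in $\overline{\RSW_w \cup \{(i+1, w_i)\}}$; a direct comparison of SW diagrams shows this set equals $s_i \cdot \overline{\RSW_{s_iw}}$, so swapping rows $i$ and $i+1$ gives a bijection with $\M(n, \RSW_{s_iw})$, contributing $\m_{s_iw}(q)$ after dividing by $(q-1)^n$. When $M_{i+1, w_i} \neq 0$, I would use this entry as a pivot to clear the rest of column $w_i$ and then delete row $i+1$ and column $w_i$, producing an $(n-1)\times(n-1)$ invertible matrix whose support lies in $\overline{\RSW_{v(w)}}$; the factor $q^n$ would then arise from a careful count of free parameters in the reduction (the nonzero pivot together with the cleared entries), after absorption into the torus normalization.

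The principal obstacle in both parts is the diagram-level bookkeeping. For \eqref{eq:Poincare HRP recursion}, one must pin down exactly which elements of $[\id, w]$ are covered by each of the three correction terms, so that the inclusion--exclusion sign pattern is precise and the length shifts come out correctly; in particular, one must use the ``no stray entries'' clauses of the heavy reduction pair definition to rule out the Bruhat elements that would spoil the sign cancellations. For \eqref{eq:matrix HRP recursion}, one must verify that $\RSW_w - (i+1, w_i)$ agrees with $\RSW_{v(w)}$ up to row and column permutation, ensuring the matrix reduction lands in $\M(n-1, \RSW_{v(w)})$, and must also track how the row-clearing interacts with the $\RSW_w$ support constraints. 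The heavy reduction pair axioms should make these comparisons tractable, but the asymmetry between $x$ and $y$ --- in particular that $\inv(w) - \inv(w-x) \neq \inv(w) - \inv(w-y)$ in general --- will demand careful case analysis.
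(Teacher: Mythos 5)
Your proposal follows the same two-pronged strategy as the paper: for \eqref{eq:Poincare HRP recursion}, decompose $[\id,w]$ into $[\id,s_iw]$ and the sets of $u$ having an entry at $y$ or at $x$, and apply inclusion--exclusion along with the bijections $u\mapsto u-x$, $u\mapsto u-y$, $u\mapsto u-x-y$ onto the corresponding lower-rank Bruhat intervals (this is precisely the HLSS decomposition the paper invokes); for \eqref{eq:matrix HRP recursion}, split $\M(n,\RSW_w)$ on whether the $(i+1,w_i)$ entry vanishes, identify the zero case with $\M(n,\RSW_{s_iw})$ via a row swap, and reduce the nonzero case by Gaussian elimination at that pivot, then compare $\RSW_w-(i+1,w_i)$ with $\RSW_{v(w)}$. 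Your one-sided column-clearing variant works for the same structural reasons the paper's two-sided clearing does (the $R\cup C$ containment of $\RSW_w$ keeps the zeros intact), and the free-parameter count of $q^n$ comes out the same.

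The one genuine gap is in your justification of the exponents $\inv(w)-\inv(w-y)$ and $\inv(w)-\inv(w-x)$. You write that ``each inversion of $w$ involving a deleted entry contributes to $\ell(u)$,'' but this is not how the shift arises: what is needed is that for \emph{every} $u$ in the relevant set (say $S_y$, those with an entry at $y$), the number of inversions of $u$ involving position $i$ is the same constant $\inv(w)-\inv(w-y)=w_i-i$. This is false for arbitrary $u$ with $u_i=w_i$; it holds here only because every $u\preceq w$ satisfies $u_j<w_i$ for $j<i$ (and $u_j>w_{i+1}$ for $j>i+1$), which is a nontrivial structural fact about Bruhat intervals. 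The paper obtains it from Sj\"ostrand's hull theorem \cite[Thm.~4]{Sj}. Without this input, the bijection $u\mapsto u-y$ does not visibly respect the length grading, and the signed sum you wrote down cannot be collected into the claimed monomial multiples of $P_{w-x}$, $P_{w-y}$, $P_{w-x-y}$. You gesture at ``bookkeeping'' but this particular point is a theorem, not bookkeeping, and it should be cited or reproved.
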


\begin{example}
Let $w=3412 \in \Sn{4}$, whose first descent (at position $i=2$,
involving the entries $y = (2, w_2) = (2, 4)$ and $x=(3,w_3)=(3,1)$)
is a heavy reduction pair.  Then $s_2w=3142\in\Sn{4}$ and $w-x=231\in\Sn{3}$, $w-y=312 \in \Sn{3}$ and
$w-x-y=21 \in \Sn{2}$. One can compute the Poincar\'e polynomials
\begin{align*}
P_{3412}(t) &=t^4+4t^3+5t^2+3t+1,\\
P_{3142}(t) &=t^3+3t^2+3t+1,\\
P_{231}(t) = P_{312}(t) &=t^2+2t+1,\\
P_{21}(t) &= t+1
\end{align*} 
and verify that they satisfy the relation
\[
P_{3412}(t) = P_{3142}(t) + t^{4 - 2}\cdot P_{312}(t) + t^{4 - 2} \cdot P_{231}(t) - t^{4 - 1}\cdot P_{21}(t).
\]
For the matrix counts we have that $v=321 \in
\Sn{3}$ and one can compute
\begin{align*}
\m_{3412}(q) &= q^6(q^4+3q^3+5q^2+4q+1),\\
\m_{3142}(q) &= q^6(q^3+3q^2+3q+1),\\
\m_{321}(q)  &= q^3(q^3+2q^2+2q+1)
\end{align*} 
and verify that they satisfy the relation
\[
\m_{3412}(q) = \m_{3142}(q) + q^4 \m_{321}(q). 
\]
\end{example}

\subsubsection{Proof of Equation \eqref{eq:Poincare HRP recursion}}
\label{proof HRP for Poincare}

Given a Gasharov--Reiner permutation $w$ whose first descent, involving the entries $y = (i, w_i)$ and $x = (i + 1, w_{i + 1})$, is a heavy reduction pair, the argument of Hultman \emph{et al.} leading up to \cite[Eq.~(3)]{HLSS} establishes that the Bruhat interval $[\id, w]$ decomposes as the union of the following sets:
\begin{compactitem}
\item the Bruhat interval $[\id, s_i w]$, 
\item the set $S_x= \{u \in [\id, w] \mid u_{i + 1} = i + 1\}$ whose
  elements have an entry at $x$, and
\item the set $S_y = \{u \in [\id, w] \mid u_{i} = i\}$ 
  whose elements have an entry at $y$.
\end{compactitem}
Moreover, we may rephrase several of their observations as follows: they establish that $[\id, s_i w]$ is disjoint from $S_x$ and
$S_y$; that the maps $u \mapsto u - x$ and $u \mapsto u - y$ are
bijections respectively between $S_x$ and the Bruhat interval $[\id, w
- x]$ in $\Sn{n - 1}$ \cite[Lem.~2.1]{Billey} and between $S_y$ and $[\id, w - y]$; and
similarly that the map $u \mapsto u - x - y$ is a bijection between
$S_x \cap S_y$ and $[\id, w - x - y]$.  
Moreover, it follows from Sj\"ostrand's
result \cite[Thm.~4]{Sj} that every permutation $u \in [\id, w]$
satisfies $u_j < w_i$ for $j < i$ and $u_j > w_{i + 1}$ for $j > i + 1$.  
Consequently, among the permutations 
$u \in S_y$, the $i$th entry is always involved in
exactly the same number of inversions\footnote{As it happens, this number
$\inv(w) - \inv(w - y)$ is equal to $w_i - i$: there are $w_i - 1$
entries of $u$ smaller than $u_i = w_i$, of which $i - 1$ occur in $u$
before the $i$th position, leaving $w_i - i$ to occur after the $i$th
position; and none of the entries in $u$ before the $i$th position are
larger than $w_i$. One can make a similar computation with $y$
replaced by $x$.}, and similarly for those permutations with an
entry at position $x$. Putting everything together, we have
\begin{align*}
P_w(t) & = \sum_{u \preceq w} t^{\inv(u)} \\
& = \sum_{u \preceq s_iw} t^{\inv(u)} + \sum_{u \in S_x} t^{\inv(u)} +
\sum_{u \in S_y} t^{\inv(u)} - \sum_{u \in S_x\cap S_y} t^{\inv(u)}\\
& = P_{s_i w}(t) + \sum_{u \preceq w - x} t^{\inv(u) + \inv(w) -
  \inv(w - x)} + \sum_{u \preceq w - y} t^{\inv(u) + \inv(w) - \inv(w
  - y)} - \sum_{u \preceq w - x - y} t^{\inv(u) + \inv(w) - \inv(w - x
  -y)} \\
& = P_{s_i w}(t) +t^{\inv(w) - \inv(w - x)} P_{w - x}(t) +t^{\inv(w) -
  \inv(w - y)} P_{w - y}(t) - t^{\inv(w) - \inv(w - x - y)} P_{w - x - y}(t),
\end{align*}
as desired.

\subsubsection{Proof of Equation \eqref{eq:matrix HRP recursion}}

We get the desired recursion for $\m_w(q)$ by careful applications
of Gaussian elimination using the entry $(i + 1, w_i)$. Throughout the
proof it will be helpful to refer to Figure~\ref{fig:hrp}.  We begin by noting some properties of heavy reduction pairs that follow immediately from Definition~\ref{definition of reduction pairs}.
\begin{remark}
\label{hrp properties remark}
If the first descent of $w = w_1\cdots w_n$ is a heavy reduction pair in position $i$, and if $j$ is minimal so that $w_{i + 1} < w_j$, then
\begin{compactenum}[(i)]
\item $(w_1, \ldots, w_{j - 1}) = (1, \ldots, j - 1)$,
\item $(w_j, \ldots, w_i) = (w_i -i + j, w_i - i + j + 1, \ldots, w_i)$, and
\item $w_{i + 1} = j$.
\end{compactenum}
\end{remark}

\begin{figure}
\begin{center}
\includegraphics[height=5cm]{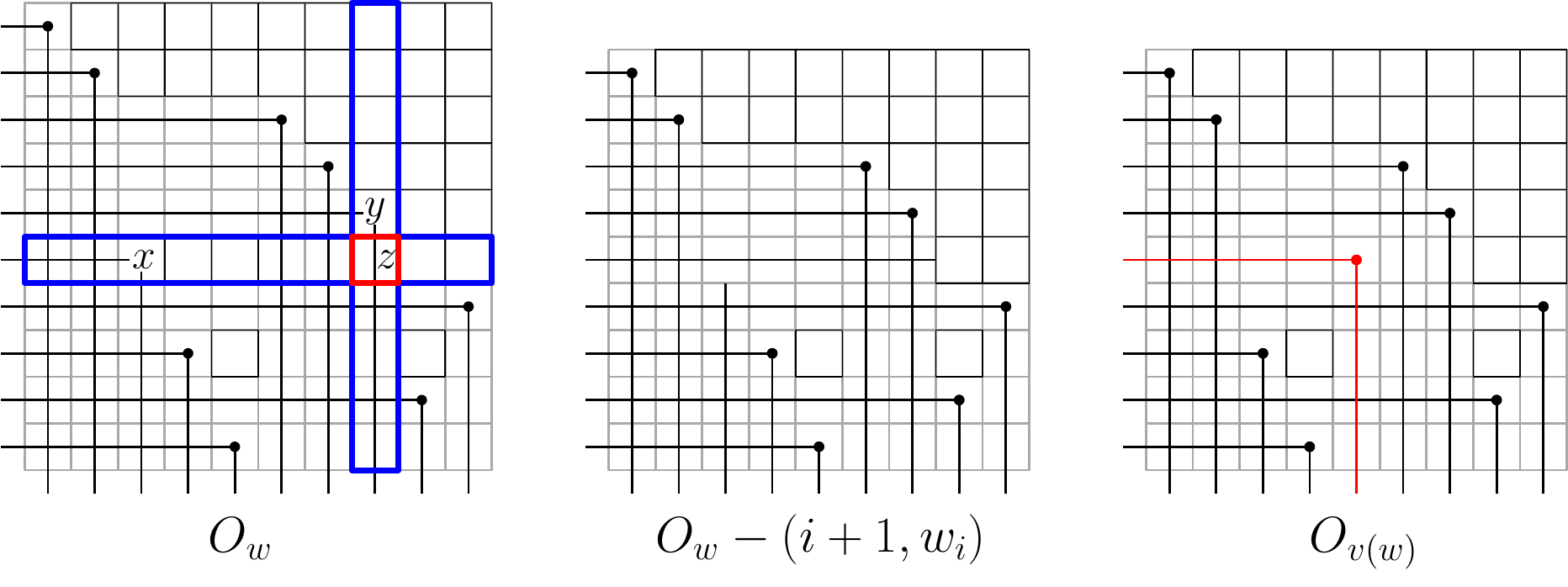}
\caption{Example of $\RSW_w$ for the permutation
  $w=1\,2\,6\,7\,8\,3\,10\,4\,9\,5$ whose first descent at position $i=5$ is a heavy
  reduction pair. Left: we perform Gaussian
  elimination on matrices in $\M(10,\RSW_w)$ with respect to the entry
  $z=(i+1,w_i)=(6,8)$. Center: the diagram after elimination. Right: the
  diagram $\RSW_{v(w)}$ for the permutation $v(w)$.}
\label{fig:hrp}
\end{center}
\end{figure}

\begin{prop} \label{prop:qelim}
Let $w$ be in $\Sn{n}$.  If the first descent of $w$, involving the entries $y = (i, w_i)$ and $x=(i+1,w_{i + 1})$, is a heavy reduction pair then 
\[
\m_w(q) = \m_{s_i w}(q) + q^n \cdot \#\M(n,\RSW_{w} - (i + 1, w_i))/(q - 1)^n,
\] 
where the deleted diagram $\RSW_{w} - (i + 1, w_i)$ is as in Definition~\ref{def:deletion}.
\end{prop}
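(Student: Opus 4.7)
The plan is to partition $\M(n,\RSW_w)$ according to whether the entry at $z:=(i+1,w_i)$ is zero or nonzero, and then apply Gaussian elimination in the nonzero case. First observe that $z\in\overline{\RSW_w}$, since $z$ lies directly south of the permutation entry $y=(i,w_i)$.

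For matrices $M$ with $M_z=0$, we have $M\in\M(n,\RSW_w\cup\{z\})$. A direct check using the anti-inversion description of SW diagrams from Section~1 shows that $\RSW_w\cup\{z\}$ coincides, after swapping rows $i$ and $i+1$, with $\RSW_{s_iw}$: the pair $(i,i+1)$ flips between being an inversion and an anti-inversion of $w$, and nothing else changes. Since row swaps preserve invertibility, this class contributes exactly $\mat_{s_iw}(q)$ matrices, giving $\m_{s_iw}(q)$ after dividing by $(q-1)^n$.

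For matrices $M$ with $M_z\neq 0$, I would first clear column $w_i$ by row operations $R_r\mapsto R_r-(M_{r,w_i}/M_z)R_{i+1}$ for $r\neq i+1$, and then clear row $i+1$ by column operations $C_c\mapsto C_c-(M_{i+1,c}/M_z)C_{w_i}$ for $c\neq w_i$. The column step is automatic, since after the row step column $w_i$ is supported only at the pivot. The essential support-preservation claim for the row step is: if $(i+1,c)\in\overline{\RSW_w}$ and $(r,w_i)\in\overline{\RSW_w}$ with $r\neq i+1$, then $(r,c)\in\overline{\RSW_w}$. I would prove this contrapositively by case analysis on the position of $r$ relative to $i$ and $i+1$, using the HRP axioms in Definition~\ref{definition of reduction pairs} (no $w_j$ with $j<i$ exceeds $w_i$, and no $w_j$ with $j>i+1$ is less than $w_{i+1}$) together with Remark~\ref{hrp properties remark}. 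Verifying this support-preservation is the main technical obstacle.

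Once the elimination is justified, each such matrix factors uniquely into: a nonzero pivot value ($q-1$ choices), free entries at the remaining allowed cells of row $i+1$ ($q^R$ choices), free entries at the remaining allowed cells of column $w_i$ ($q^C$ choices), and an invertible $(n-1)\times(n-1)$ submatrix supported in $\overline{\RSW_w-(i+1,w_i)}$. From Remark~\ref{hrp properties remark} the allowed columns in row $i+1$ are $\{w_1,\dots,w_{i+1}\}$ and the allowed rows in column $w_i$ are $\{i,i+1,\dots,n\}$, so $R=i$, $C=n-i$, and $R+C=n$. Hence this class contributes $(q-1)\,q^n\cdot\#\M(n-1,\RSW_w-(i+1,w_i))$; dividing the total by $(q-1)^n$ and combining with the $M_z=0$ contribution yields the displayed identity.
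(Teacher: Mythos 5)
Your proposal is correct and follows essentially the same route as the paper: split on whether the entry at $z=(i+1,w_i)$ vanishes, identify the $M_z=0$ stratum with $\M(n,\RSW_{s_iw})$ via a row swap, and in the $M_z\neq 0$ case perform Gaussian elimination with pivot $z$ and count the $(q-1)q^n$ ways to reverse it. The support-preservation fact you flag (if $(i+1,c)$ and $(r,w_i)$ lie in $\overline{\RSW_w}$ then so does $(r,c)$) is exactly what the paper establishes by observing that $\RSW_w\subseteq R\cup C$ for its chosen row-block $R$ and column-block $C$, and it does indeed follow from Remark~\ref{hrp properties remark}; so your sketch, once filled in, reproduces the published argument.
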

\begin{proof}
Let $z=(i+1,w_i)$.  Since $O_{s_i w}$ equals the diagram $\RSW_w \cup
\{z\}$ with rows $i$ and $i+1$ switched, the
difference $\m_w(q) - \m_{s_i w}(q)$ is (up to the factor $(q - 1)^n$) the number of invertible matrices with support contained in $\overline{\RSW_w}$ having nonzero entry in position $z$.  We now examine the entries of $\RSW_w$ in the row and column of $z$.

Let $R$ be the union of the rows indexed by $\{1, 2, \ldots, i - 1\}$ and let $C$ be the union of the columns indexed by $\{w_{i + 1} + 1, \ldots, w_j - 1\} \cup \{w_i + 1, \ldots, n\}$.  It follows from Remark~\ref{hrp properties remark} and the definition of the SW diagram $\RSW_w$ that the entries of $\RSW_w$ in row $i + 1$ are exactly those in $C$ and the entries of $\RSW_w$ in column $w_i$ are exactly those in $R$, and that $\RSW_w$ is contained in $R \cup C$.  Consequently, if we superimpose row $i + 1$ with any row in $R$, the entries in $\RSW_w$ in row $i + 1$ cover those in the other row; and, similarly, if we superimpose column $w_i$ with any column from $C$, the entries in $\RSW_w$ in column $w_i$ cover those in the other column.

Now consider the set of matrices in $\M(n, \RSW_w)$ with nonzero entry
in position $z$.  Given such a matrix $A$, perform the following
operation: use Gaussian elimination with the nonzero entry in position
$z$ to kill the other nonzero entries in its row and column, then
delete the row and column of $z$.  The resulting matrix $B$ certainly
belongs to $\GL_{n-1}(\F_q)$.  Moreover, the analysis of the preceding
paragraph guarantees that no step in the elimination procedure disturbs any of the zero entries in positions given by $\RSW_w - z$, so in fact $B$ belongs to $\M(n - 1, \RSW_w - z)$.  Finally, given a matrix $B$ in $\M(n - 1, \RSW_w - z)$, one may reverse this process in precisely $(q - 1)q^n$ ways: first, choosing a nonzero entry for position $z$, then making appropriate row operations to fill in the $n$ entries in row $i + 1$ and column $w_i$ that do not belong to $R$ or $C$.  The resulting matrix belongs to $\M(n, \RSW_w)$ by the same analysis.  The result follows.
\end{proof}

\begin{prop}
\label{prop:v and w/e have the same diagram}
Let $w$ be in $\Sn{n}$.  If the first descent of $w$, involving the entries $y = (i, w_i)$ and $x=(i+1,w_{i + 1})$, is a heavy reduction pair then $\RSW_{w} - (i + 1, w_i)$ and $\RSW_{v(w)}$ are identical up to permutations of rows and columns. 
\end{prop}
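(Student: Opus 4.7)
The plan is to compute both $\RSW_w - (i + 1, w_i)$ and $\RSW_{v(w)}$ explicitly using the rigid structure of the first $i + 1$ entries of $w$ provided by Remark~\ref{hrp properties remark}, and then exhibit specific row and column permutations carrying one diagram to the other. Throughout, I would distinguish between the original indexing and the ``new'' indexing of the deleted diagram, in which rows $[i + 2, n]$ have been shifted down by one and columns $[w_i + 1, n]$ shifted left by one.

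Using the explicit form $(w_1, \ldots, w_{i + 1}) = (1, \ldots, j - 1, \, w_j, \, w_j + 1, \, \ldots, \, w_i, \, j)$, I would first enumerate $\RSW_w$ row by row: rows $a \in [1, j - 1]$ have columns $[a + 1, n]$; rows $a \in [j, i]$ have columns $[w_a + 1, n]$; row $i + 1$ has columns $[j + 1, w_j - 1] \cup [w_i + 1, n]$; and rows $a \geq i + 2$ have columns, necessarily among $[j + 1, w_j - 1] \cup [w_i + 1, n]$, coming from anti-inversions internal to positions $[i + 2, n]$. Deleting row $i + 1$ and column $w_i$ and reindexing gives the corresponding description of $\RSW_w - (i + 1, w_i)$. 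In parallel, by unwinding the order-isomorphism in the definition of $v(w)$, I would find that the first $i$ values of $v(w)$ are $1, \ldots, j - 1, w_j, w_j + 1, \ldots, w_i - 1$, ending with $w_j - 1$ at position $i$, while the values of $v(w)$ at positions $[i + 1, n - 1]$ are just the values of $w$ at positions $[i + 2, n]$ each decreased by one (since all such values exceed $j$). An analogous row-by-row enumeration of $\RSW_{v(w)}$ follows. Under the identity row correspondence (in new indexing), rows $[1, i]$ of the two diagrams match literally as sets of column intervals, and the comparison reduces to rows $a \geq i + 1$.

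For those rows, anti-inversions coming from ``high'' values $w_c \in [w_i + 1, n]$ contribute to matching columns on both sides, but anti-inversions coming from ``low'' values $w_c \in [j + 1, w_j - 1]$ sit in column $w_c$ in the deleted diagram but in column $w_c - 1$ in $\RSW_{v(w)}$. I would rectify this uniformly by taking the column permutation $\kappa$ of $[n - 1]$ to be the cyclic rotation of the block $[j, w_j - 1]$: $\kappa(b) = b + 1$ for $b \in [j, w_j - 2]$, $\kappa(w_j - 1) = j$, and $\kappa(b) = b$ otherwise. The verification that $\kappa$ is consistent across all rows is then straightforward: on rows $a < j$ the column set $[a + 1, n - 1]$ contains the entire block $[j, w_j - 1]$, which $\kappa$ preserves setwise; on rows $a \in [j, i]$ the column set lies entirely above $w_j - 1$ and $\kappa$ acts trivially; and on rows $a \geq i + 1$, $\kappa$ implements exactly the shift required. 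The most delicate point is the assignment $\kappa(w_j - 1) = j$, which pairs the column of the exceptional value $v_i = w_j - 1$ in $\RSW_{v(w)}$ (appearing only in rows $[1, j - 1]$) with the column of the exceptional entry $w_{i + 1} = j$ of $w$ (whose column in $\RSW_w$ also has entries in precisely those rows).

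The main obstacle is the bookkeeping: three indexings are in play simultaneously (the original for $\RSW_w$, the new for the deleted diagram, and the $v$-value indexing for $\RSW_{v(w)}$), and the order-isomorphism in the definition of $v(w)$ interacts with the column deletion in a way that forces $\kappa$ to be a non-trivial cyclic permutation rather than the identity. Once the explicit row enumerations are in hand, however, the cyclic structure of $\kappa$ simply reflects the cyclic-rotation nature of the operation $w \mapsto v(w)$: the passage from $w$ to $v(w)$ can be viewed as a cyclic rotation of the values at positions $[j, i + 1]$ of $w$ followed by deletion of position $i + 1$.
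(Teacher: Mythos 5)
Your argument is correct and rests on the same foundation as the paper's proof: the explicit description of $w_1,\ldots,w_{i+1}$ furnished by Remark~\ref{hrp properties remark}, together with the observation that the only discrepancy between $\RSW_w - (i+1,w_i)$ and $\RSW_{v(w)}$ is confined to the columns indexed by $[j, w_j-1]$. The difference is purely organizational. The paper deletes one further row and column (row $i$ from both diagrams; column $w_j-1$ from $\RSW_{v}$ and column $w_{i+1}=j$ from $\RSW_w-(i+1,w_i)$), notes that the two reduced diagrams coincide on the nose (both equal $\RSW_w$ with rows $i,i+1$ and columns $w_{i+1},w_i$ removed), and then separately verifies that the stripped-off row and column agree. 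You instead enumerate both diagrams row by row and produce the column permutation directly. Unwinding the composite of the two column-reindexings implicit in the paper's argument produces exactly your $\kappa$ (identity off $[j,w_j-1]$, $b\mapsto b+1$ on $[j,w_j-2]$, $w_j-1\mapsto j$), so the proofs are really the same up to bookkeeping; making $\kappa$ explicit is a pleasant addition, at the price of somewhat more case-checking in the verification that it is consistent across all rows.

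One small caveat, which does not affect the body of the argument: the closing aside that $w\mapsto v(w)$ is ``a cyclic rotation of the values at positions $[j,i+1]$ of $w$ followed by deletion of position $i+1$'' is not accurate --- neither direction of such a rotation followed by deletion of position $i+1$ yields $v(w)$ (for instance, rotating $(w_j,\ldots,w_i,j)$ to $(w_{j+1},\ldots,w_i,j,w_j)$ and deleting position $i+1$ gives $w-y$, not $v(w)$). The correct description, which matches \eqref{definition of v}, is: first delete position $i+1$, then cyclically shift positions $[j,i]$ so that $w_j$ moves from position $j$ to position $i$.
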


\begin{proof}
Let $j$ be minimal such that $w_{i + 1} < w_j$.  By construction, the diagram $\RSW_{v - (i, v_i)} = \RSW_{v} - (i, v_i)$ is identical to the diagram that we get by removing the $i$th row and $w_{i + 1}$th column from $\RSW_w - (i + 1, w_i)$, as both are identical to the diagram that we get by removing the $i$th and $(i + 1)$st rows and $w_i$th and $w_{i + 1}$th columns from $\RSW_w$.  Thus, it suffices to check that the $i$th rows of $\RSW_{v}$ and $\RSW_w - (i + 1, w_i)$ are equal and that the $(w_j - 1)$th column of $\RSW_v$ is equal to the $w_{i + 1}$th column of $\RSW_w-(i + 1, w_i)$.

First, we consider the columns.  By Remark~\ref{hrp properties remark}, the $w_{i + 1}$th column of $\RSW_w$ contains exactly the $j - 1$ entries $(1, w_{i + 1})$, $(2, w_{i + 1})$, \ldots, $(j - 1, w_{i + 1})$, and so the $w_{i + 1}$th column of $\RSW_w-(i + 1, w_i)$ consists of these same $j - 1$ entries.  Similarly, applying Remark~\ref{hrp properties remark} and the definition of $v$, we see that the $(w_{j} - 1)$th column of $\RSW_{v}$ consists of the entries $(1, w_j - 1)$, \ldots, $(j - 1, w_j - 1)$, as needed.

Second, we consider the rows.  Since $i$ is the position of the first descent, 
$w_i$ is a left-to-right maximum of $w$.  Thus, the $i$th row of $\RSW_w$ consists of the $n-w_i$ elements $(i, w_i+1)$, $(i, w_i+2)$, \ldots, $(i, n)$, and no others.  Then the $i$th row of $\RSW_w - (i + 1, w_i)$ also consists of these $n-w_i$ boxes, each shifted one unit to the left.  In $v$, the entries $(j, w_j), \ldots, (i - 1, w_i - 1)$ do not form inversions with the entry $(i, w_j - 1)$, while the entries in columns $w_i, \ldots, n- 1$ occur in rows with indices larger than $i$.  Thus, the $i$th row of $\RSW_v$ consists of the same $n - w_i$ entries as the $i$th row of $\RSW_w - (i + 1, w_i)$.
%
\end{proof}

Finally, we may put these two propositions together to conclude the desired
recursion \eqref{eq:matrix HRP recursion} for matrix counts.

\subsection{Recursions for permutations with light reduction pairs}

In this section, we consider the case that the first descent of $w$ is
a light reduction pair. The main result of this section is the following one, which gives a pair of related recursions for Poincar\'e polynomials and matrix counts:

\begin{prop} \label{prop:LRP}
Let $w$ be in $\GR{n}$.  If the first descent of $w$, involving the entries $y = (i, w_i)$ and $x = (i + 1, w_{i + 1})$, is a light reduction pair then
\begin{equation}
\label{LRP prop equation for Poincare}
P_w(t) = P_{s_iw}(t) + t^{\inv(w) - \inv(w - y)}P_{w - y}(t)
\end{equation}
and
\begin{equation}
\label{LRP prop equation for matrices}
\m_w(q) = q\cdot \m_{s_iw}(q)+q^{n-1}\cdot \m_{w-y}(q).
\end{equation}
\end{prop}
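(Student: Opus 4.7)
The plan is to prove the two identities in parallel using techniques adapted from the heavy reduction pair case (Proposition~\ref{prop:HRP}), but simplified by the lighter structure of LRPs.

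For the Poincar\'e polynomial identity~\eqref{LRP prop equation for Poincare}, I follow closely the template of Section~\ref{proof HRP for Poincare}. The key input from the HLSS analysis is that, under the LRP hypothesis, the Bruhat interval admits the simpler disjoint decomposition
\[
[\iota, w] = [\iota, s_i w] \sqcup S_y,
\]
where $S_y = \{u \preceq w : u_i = w_i\}$, as opposed to the inclusion-exclusion in the HRP case. Billey's lemma~\cite[Lem.~2.1]{Billey} provides a bijection $S_y \to [\iota, w - y]$ via $u \mapsto u - y$, and Sj\"ostrand's pattern characterization~\cite[Thm.~4]{Sj} combined with the LRP hypothesis guarantees that for every $u \in S_y$, the entry at position $y$ contributes a fixed number of inversions, equal to $\inv(w) - \inv(w-y)$. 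Weighting each piece of the decomposition by $t^{\inv(u)}$ yields
\begin{align*}
P_w(t) &= \sum_{u \preceq s_iw} t^{\inv(u)} + \sum_{u \in S_y} t^{\inv(u)} = P_{s_iw}(t) + t^{\inv(w) - \inv(w-y)} P_{w-y}(t).
\end{align*}

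For the matrix-count identity~\eqref{LRP prop equation for matrices}, I partition $\M(n, O_w)$ according to the value of the entry at position $z = (i+1, w_i) \in \overline{O_w}$, in the spirit of Proposition~\ref{prop:qelim}. A direct check from the definitions (valid whenever $i$ is the first descent of $w$) gives the diagram identity
\[
O_{s_iw} = s_i \cdot O_w \cup \{(i, w_i)\},
\]
and the row-swap bijection then identifies $\{A \in \M(n, O_w) : A_z = 0\}$ with $\M(n, O_{s_iw})$, yielding the contribution $\mat_{s_iw}(q)$ to $\mat_w(q)$. For matrices with $A_z \neq 0$, I perform Gaussian elimination at the pivot $z$: scale row $i+1$ to normalize $A_z = 1$ (contributing a factor of $q - 1$), use row $i+1$ to clear the remaining nonzero entries of column $w_i$, and identify the resulting reduced matrix with an element of $\M(n-1, O_{w-y})$ together with $n - 1$ independent choices of freed entries in row $i+1$ (contributing the factor $q^{n-1}$). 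Combining the two cases and dividing by $(q-1)^n$ yields the recursion for $\m_w(q)$.

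The main obstacle is verifying that the Gaussian elimination strictly respects $\M(n, O_w)$. The containment $\mathrm{row}_{i+1}(\overline{O_w}) \subseteq \mathrm{row}_i(\overline{O_w})$ holds cleanly under LRP because row $i$ of $\overline{O_w}$ consists of all columns $c \leq w_i$. However, for rows $r > i+1$ with $w_r < w_{i+1}$ (permitted by LRP), the analogous containment fails directly; a secondary case analysis exploiting the LRP assumption that no $w_j$ lies in $(w_{i+1}, w_i)$ for $j > i+1$, possibly combined with auxiliary column operations, will be needed to complete the elimination. I expect this to be the most delicate part of the proof, since the HRP case enjoyed stronger row/column uniformity that is not present here.
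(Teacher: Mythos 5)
Your proof of the Poincar\'e polynomial identity~\eqref{LRP prop equation for Poincare} is correct and follows the paper's argument essentially verbatim: the disjoint HLSS decomposition $[\iota,w] = [\iota, s_i w] \sqcup S_y$, Billey's lemma for the bijection $S_y \to [\iota, w-y]$, and Sj\"ostrand's theorem to control the inversion count of the deleted entry.

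The matrix-count argument, however, has a genuine gap that cannot be repaired within your framework. Your partition of $\M(n, O_w)$ by whether $A_z = 0$ or $A_z \neq 0$ produces an identity of the form $\mat_w(q) = \mat_{s_i w}(q) + \#\{A \in \M(n,O_w) : A_z \neq 0\}$, and if the Gaussian elimination worked as you describe, the second summand would be $(q-1)q^{n-1}\mat_{w-y}(q)$, yielding
\[
\m_w(q) = \m_{s_i w}(q) + q^{n-1}\m_{w-y}(q).
\]
But the target identity~\eqref{LRP prop equation for matrices} carries an extra factor of $q$ on $\m_{s_i w}(q)$; in $\mat$-terms it reads $\mat_w(q) = q\,\mat_{s_i w}(q) + q^{n-1}(q-1)\mat_{w-y}(q)$. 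Since $\{A : A_z = 0\}$ is in bijection with $\M(n, O_{s_i w})$ (as both you and Remark~\ref{rem:propLRP} confirm), no straightforward partition of $\M(n, O_w)$ can ever produce that factor of $q$; the actual count of matrices with $A_z \neq 0$ is $(q-1)\mat_{s_i w}(q) + q^{n-1}(q-1)\mat_{w-y}(q)$, which has an extra $(q-1)\mat_{s_i w}(q)$ term that Gaussian elimination at the pivot $z$ will not see. You can verify the discrepancy already for $w=3241$: $\m_{3241}(q) = q^{10}+3q^9+4q^8+3q^7+q^6$, whereas $\m_{2341}(q) + q^3\m_{231}(q) = q^9+4q^8+5q^7+2q^6$.

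This is exactly why the paper does not mimic the HRP argument. Instead it forms the difference $\mat_w(q) - q\cdot \mat_{s_i w}(q)$ \emph{before} any structural analysis, via Lemma~\ref{lem:qdiffmat}, which interprets $q\cdot\#\M(n, D \cup \{y\})$ as counting pairs $(a,A)$ with $a \in \F_q$ and $A \in \M(n, D\cup\{y\})$ (reading $a$ as a tentative new value of the entry at $y$), and reduces the whole difference to a signed count of bordered matrices of rank $n-1$. The combinatorics of the light reduction pair (the north-east zero rectangle from Remark~\ref{rem:propLRP}(ii)) is then used to count these bordered matrices explicitly via $N(B)$. The elimination you propose, even with the auxiliary column operations you flag, cannot reach this identity because the underlying arithmetic of your split is off by a factor of $q$ on the first summand.
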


\begin{example}
  With $w=3241$, the descent of $w$ in position $i=1$ is a light
  reduction pair with $y=(1,w_1)=(1,3)$ and $x=(2,w_2)=(2,2)$. We have
  $s_1w=2341$ and $w-y=231$. See \autoref{fig:O_3241} for the
  south-west diagrams of $w$, $s_1w$, and $w-y$.
One can compute the Poincar\'e polynomials
\begin{align*}
P_{3241}(t) &=t^4 + 3t^3+4t^2 + 3t + 1,\\
P_{2341}(t) &=t^3+3t^2+3t + 1,\\ 
P_{231}(t) & = t^2 + 2t+1,
\end{align*}
and verify that they satisfy the relation
\[
P_{3241}(t)  = P_{2341}(t) + t^{4 - 2}P_{231}(t). 
\]
Similarly, one can compute the matrix counts
\begin{align*}
\m_{3241}(q) &= q^6(q^4 + 3q^3+4q^2+3q+1),\\
\m_{2341}(q) &= q^6(q^3+3q^2+3q+1),\\
\m_{231}(q) &=  q^3(q^2+2q+1),
\end{align*}
and verify that they satisfy the relation
\[
\m_{3241}(q)  = q\m_{2341}(q) + q^3\m_{231}(q).
\]
\end{example}

\begin{figure}
\begin{center}
\includegraphics[scale=1.25]{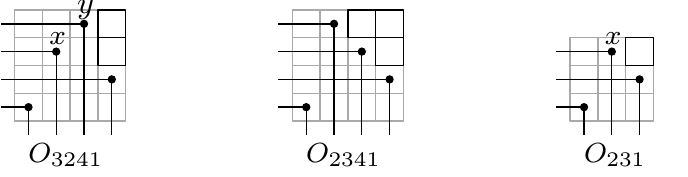}
\caption{South-west diagrams of $w=3241$, $s_1w=2341$ and $w-y=231$.}
\label{fig:O_3241}
\end{center}
\end{figure}

\subsubsection{Proof of Equation \eqref{LRP prop equation for Poincare}}
\label{proof of LRP for Poincare}

Given a Gasharov--Reiner permutation $w$ whose first descent, involving
the entries $y = (i, w_i)$ and $x = (i + 1, w_{i + 1})$, is a light
reduction pair, the argument of Hultman \emph{et al.} leading up to
\cite[Eq.~(1)]{HLSS} establishes that the Bruhat interval $[\id, w]$
decomposes as the disjoint union of the Bruhat interval $[\id, s_i w]$
and the set of permutations below $w$ that contain an entry at
position $y$.  In addition, the operation $u \mapsto u - y$ is a
bijection between the latter set and the interval $[\id, w - y]$ in the
Bruhat order on $\Sn{n - 1}$. To finish the proof of \eqref{LRP prop equation for Poincare}, it is enough to observe that, as in Section~\ref{proof HRP for Poincare}, Sj\"ostrand's result \cite[Thm.~4]{Sj} can be used to establish that these bijections respect the grading of the Bruhat order.

\subsubsection{Proof of Equation \eqref{LRP prop equation for
    matrices}}

We begin with a useful lemma for how matrix counts avoiding a diagram
behave when one adds an entry to the diagram.
For $D \subseteq [n] \times [n]$, let $\X(n,D)$ be the set of all
$n\times n$ matrices with entries in $\F_q$ and support avoiding
$D$. (Thus $\M(n,D) = \X(n,D) \cap \GL_n(\F_q)$.)

Let $D \subseteq [n] \times [n]$ and $y\in \overline{D}$. Given a matrix $B \in \X(n-1,D-y)$ and $a,b \in \F_q$, define $S_{a\to
  b}(B)$ to be the number of matrices $A \in \M(n,D)$ such that:
\begin{compactitem}
\item $A_y=a$,
\item when one removes from $A$ the row and column of $y$, the result is $B$, and
\item  the $n\times n$ matrix $A'$ defined by $A'_y=b$ and $A'_z=A_z$
  for $z\neq y$ is singular.
\end{compactitem}

We prove a general lemma, showing how to express the difference between two matrix counts in terms of these $S_{a \to b}(B)$.

\begin{lemma} \label{lem:qdiffmat}
Let $D\subseteq [n] \times [n]$ and $y \in \overline{D}$. Then 
\[
\#\M(n,D) - q\cdot \#\M(n,D \cup \{y\}) = \sum_{a \in \F_q^{\times}}\smashoperator[r]{\sum_{B \in \M(n-1, D - y)}}
\left( S_{a\to 0}(B) - S_{0\to a}(B)\right).
\]
\end{lemma}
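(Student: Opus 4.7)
The plan is to rewrite both sides of the claimed identity as sums indexed by what happens to the entry at position $y$, and then exploit the fact that the determinant is an affine function of that entry.

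First I would stratify $\#\M(n,D)$ by the value of $A_y$: set $N_a := \#\{A \in \M(n,D) : A_y = a\}$ for $a \in \F_q$, so that $\#\M(n,D) = \sum_{a\in\F_q} N_a$. Since $\M(n, D\cup\{y\})$ consists exactly of those $A \in \M(n,D)$ with $A_y = 0$, we have $\#\M(n,D\cup\{y\}) = N_0$. A short manipulation then gives
\[
\#\M(n,D) - q\cdot \#\M(n,D\cup\{y\}) = \sum_{a \in \F_q^\times}(N_a - N_0),
\]
so it suffices to show that for every $a \neq 0$,
\[
N_a - N_0 = \sum_{B\in\M(n-1,D-y)}\bigl(S_{a\to 0}(B) - S_{0\to a}(B)\bigr).
\]

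To establish this, I would pair up matrices that differ only at $y$: given $a \in \F_q^\times$, match each $A$ with $A_y = a$ to the matrix $A'$ defined by $A'_y = 0$ and $A'_z = A_z$ otherwise. Sort these pairs into four classes according to whether each of $A,A'$ is invertible. The difference $N_a - N_0$ is then exactly the number of pairs in which $A$ is invertible and $A'$ is singular, minus the number in which $A$ is singular and $A'$ is invertible.

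The key algebraic input comes from cofactor expansion along the row of $y = (i,j)$: if $B$ denotes the $(n-1)\times(n-1)$ matrix obtained by deleting the $i$th row and $j$th column, then $\det(A) = (-1)^{i+j} A_y \det(B) + C$, where the constant $C$ does not depend on $A_y$. Hence $\det(A) \neq \det(A')$ precisely when $\det(B) \neq 0$, i.e.\ when $B \in \M(n-1,D-y)$ (the support condition on $B$ is automatic since $A$ already avoids $D$). This identifies the first class of pairs with $\sum_B S_{a\to 0}(B)$ and the second with $\sum_B S_{0\to a}(B)$, as desired. The only subtle point worth double-checking is the indexing convention: that the $B$ arising from such an $A$ really lies in $\X(n-1, D - y)$ in the sense of Definition~\ref{def:deletion}, which follows directly from how rows and columns are deleted and reindexed. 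Summing over $a \in \F_q^\times$ completes the proof.
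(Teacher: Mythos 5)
Your proof is correct and takes essentially the same route as the paper's: both decompose according to the value at position $y$, use the fact that the matrices paired by changing $A_y$ between $0$ and $a$ while both staying invertible cancel out of the difference, and then invoke the cofactor expansion $\det(A) = \pm A_y\det(B) + C$ to rule out singular $B$. The paper organizes the cancellation via auxiliary counts $I_{a\to 0} = I_{0\to a}$ rather than your explicit four-class pairing, but this is a cosmetic difference only.
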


\begin{proof}
First, we give a convenient interpretation of the term $q\cdot
\#\M(n,D \cup \{y\})$. This counts pairs $(a,A)$ where $a \in \F_q$ and $A \in
\M(n,D\cup \{y\})$.  We view this as setting $A_y\mapsto a$ in the
invertible matrix $A$, which might or might not yield an invertible matrix.

Second, given a matrix $A$ in $\M(n,D \cup \{y\})$, the submatrix $B$ that results from removing the row and column of $y$ has
support in $\overline{D-y}$ and may or may not be invertible. We
show that the difference $\#\M(n,D)-q\cdot \#\M(n,D\cup \{y\})$ cancels all the
terms where $B$ is not invertible.

Let $S_{a\to b} = \sum_{B \in \X(n-1,D-y)} S_{a\to b}(B)$ be the
number of matrices $A$ in $\M(n,D)$ with $A_y=a$ such that setting
$A_y \mapsto b$ (and leaving all other entries of $A$ unchanged) yields a singular matrix. Similarly, let $I_{a\to b}$ be the
number of matrices $A$ in $\M(n,D)$ with $A_y=a$ such that setting
$A_y \mapsto b$ yields an invertible matrix. We break down $\M(n,D)$ as
\[
\#\M(n,D) = \sum_{a\in \F_q} \left(S_{a \to 0} + I_{a \to 0}\right),
\]
and we break down $q\cdot \#\M(n,D\cup \{y\})$ as
\[
q\cdot \#\M(n,D\cup \{y\}) = \sum_{a \in \F_q} \left(S_{0\to a} + I_{0\to a}\right).
\]
Note that $S_{0\to 0} =0$, and that for all $a\in \F_q$ we have $I_{a\to 0} = I_{0\to a}$. Thus
\[
\#\M(n,D)-q\cdot \#\M(n, D\cup \{y\}) = \sum_{a \in \F_q^{\times}}
\left( S_{a\to 0} - S_{0\to a}\right).
\]
 
Next, consider an $n\times n$ matrix $A$ over $\F_q$, thinking of the entry $A_y$ as variable,
and let $B$ be the matrix obtained by removing from $A$ the row and column of
$y$. Then $\det(A) = \pm \det(B) \cdot A_y+ k$ for some $k\in
\F_q$. If $\det(A)$ is nonconstant when viewed as a function of $A_y$ then the linear coefficient $\det(B)$ is nonzero. Therefore if $B \in
\X(n-1,D-y)$ is singular and $a\neq 0$ then $S_{a\to 0}(B)=S_{0\to a}(B)=0$. Thus
\begin{align*}
\#\M(n,D)-q\cdot \#\M(n, D\cup \{y\}) & = \sum_{a\in \F_q^{\times}}
\smashoperator[r]{\sum_{B\in \X(n-1,D-y)}} \left( S_{a\to 0}(B) - S_{0\to a}(B)\right)\\
&= \sum_{a\in \F_q^{\times}}
\smashoperator[r]{\sum_{B\in \M(n-1,D-y)}} \left( S_{a\to 0}(B) - S_{0\to a}(B)\right).\qedhere
\end{align*}
\end{proof}

We note a few points about the diagrams $\RSW_w$ of permutations $w$
whose first descent is a light reduction pair.  They follow immediately from Definition~\ref{definition of reduction pairs} (see Figure~\ref{fig:lrp}).

\begin{remark} \label{rem:propLRP}
Suppose the first descent of $w\in \Sn{n}$ is in position $i$ and is a light reduction pair.  Then:
\begin{compactenum}[(i)]
\item the $i$th and $(i+1)$st rows of $\RSW_w$ have entries in
  exactly the same set of columns, namely those with indices $\{w_i+1,\ldots,n\}$; and
\item all the entries in the north-east rectangle $[1,i-1]\times
  [w_{i}+1,n]$ are in $\RSW_w$.
\end{compactenum}
\end{remark}

 It follows from Remark~\ref{rem:propLRP}(i) that
$\RSW_{s_i w} = \RSW_w \cup \{(i,w_i)\}$. Then by Lemma~\ref{lem:qdiffmat}
applied to $D = \RSW_w$ and $y=(i,w_i)$ we have
\begin{equation}
\label{applying lemma to SW diagram}
\mat_w(q) - q\cdot \mat_{s_iw}(q) = \sum_{a \in
  \F_q^{\times}}\smashoperator[r]{\sum_{B \in \M(n-1, \RSW_{w-y})}}
\left( S_{a\to 0}(B) - S_{0\to a}(B)\right).
\end{equation}

Fix a matrix $B$ in $\M(n-1,\RSW_{w-y})$. From $B$ we build matrices $A=\begin{bmatrix} a& \uu \\
\vv & B \end{bmatrix}$ where $a \in \F_q$,  $\uu$ is a row
vector in $\F_q^{n-1}$ whose first $r=w_i-1$ entries are free and the
rest set to zero; and
$\vv$ is column vector in $\F_q^{n-1}$ whose last $c=n-1-i$ entries
are free
and the rest set to zero. The motivation for this construction is that these matrices are simply rearrangements of matrices with
support avoiding $\RSW_w$. In particular, for any choice of $a, \uu, \vv, B$, the resulting matrix $A$ satisfies $(1,2,\ldots,i)\cdot A \cdot
(1,2,\ldots,w_i)^{-1} \in \X(n,\RSW_w)$; see
Figure~\ref{fig:lrp}. The determinant of such a matrix is
\begin{equation} \label{eq:detA}
\det(A) = a\det(B)  - \uu B^{-1} \vv.
\end{equation}
There are $q^{r+c}$ of these matrices of the form
$\begin{bmatrix} 0& \uu \\
\vv & B \end{bmatrix}$. Each of these is invertible or has rank
$n-1$. Let $N(B)$ be number of such matrices that have rank $n-1$,
so the remaining $q^{r+c}-N(B)$ matrices are invertible.

We proceed to compute the terms $S_{a\to 0}(B)$ and $S_{0\to a}(B)$.
\begin{compactitem}
\item By \eqref{eq:detA}, a matrix $A$ is counted in $S_{a\to 0}(B)$ if and only if 
  $a\det(B)\neq 0$ and $\uu B^{-1} \vv=0$. This in turn is equivalent to
  $a\neq 0$ and $\begin{bmatrix} 0 & \uu \\ \vv & B\end{bmatrix}$
  having  rank $n-1$. Thus for each $a\in \F_q^{\times}$
    the number of such cases is $S_{a\to 0}(B)=N(B)$. 
\item  By \eqref{eq:detA}, a pair $(a,A)$ is counted in  $S_{0\to a}(B)$  if and only if $a\det(B) =
  \uu B^{-1}\vv\neq 0$. This implies that $\begin{bmatrix} 0 & \uu
    \\ \vv & B \end{bmatrix}$
has rank $n$ and thus 
\[
S_{0\to a}(B) = \begin{cases}
q^{r+c}-N(B)  & \text{ if } a = \uu B^{-1}\vv/\det(B),\\
0 & \text{ otherwise.}
\end{cases}
\]
\end{compactitem}
Substituting into Equation~\eqref{applying lemma to SW diagram} gives
\begin{align}
\mat_w(q) - q\cdot \mat_{s_iw}(q) 
& = \smashoperator[r]{\sum_{B\in
    \M(n-1,\RSW_{w-y})}} (q-1)N(B) - \smashoperator[r]{\sum_{B\in
    \M(n-1,\RSW_{w-y})}} \left(q^{r+c}-N(B)\right) \notag \\
& = \smashoperator[r]{\sum_{B\in
    \M(n-1,\RSW_{w-y})}} \left( q\cdot N(B) - q^{r+c}\right). \label{eq:LRPintermeddiate}
\end{align}

Finally, we compute $N(B)$. This is the number of choices of $\uu$ and
$\vv$ such that $\uu B^{-1} \vv = 0$ and $\uu$ and $\vv$ have support
as described in the paragraph preceding \eqref{eq:detA}.  Let $\uu' =
\uu B^{-1}$. If $\uu'$ has support in the first $n-1-c$ entries then
$\uu'\vv =0$ for all $q^c$ choices of $\vv$. By Remark~\ref{rem:propLRP}(ii), the matrix $B$ has a zero
block matrix in its north-east corner of size
$(n-1-c)\times (n-1-r)$ (see Figure~\ref{fig:lrp}), and so every
vector $\uu'$ with support in the first $n-1-c$ entries is sent by $B$
to a vector $\uu'B$ with support in the first $r$ entries. Since $B$
is invertible this implies that each of the $q^{n-1-c}$ vectors $\uu'$ with
support in the first $n-1-c$ entries is the image under $B^{-1}$ of a
vector $\uu$ with support in the first $r$ entries.

For the remaining $q^r - q^{n-1-c}$ choices of $\uu$, the matrix $\uu B^{-1}$ has support
intersecting the last $c$ entries and so there are $q^{c-1}$ choices of
$\vv$ such that $\uu B^{-1}\vv =0$. From the preceding two paragraphs
it follows that 
\begin{equation} \label{eq:N(B)}
N(B) =  q^{n-1-c}\cdot q^c +(q^r - q^{n-1-c})\cdot q^{c-1} = q^{n-1}+
q^{r+c-1}-q^{n-2}.
\end{equation}
Finally combining this with \eqref{eq:LRPintermeddiate} yields 
\begin{align*}
\mat_w(q) - q\cdot \mat_{s_iw}(q) & = \sum_{B \in \M(n-1,\RSW_{w-y})}
\left(q(q^{n-1} + q^{r+c-1} +q^{n-2}) - q^{r+c}\right)\\
&= q^{n-1}(q-1) \mat_{w-y}(q).
\end{align*}
Dividing by $(q-1)^n$ gives \eqref{LRP prop equation for matrices}, as desired.

\begin{figure}
\begin{center}
\includegraphics[height=5cm]{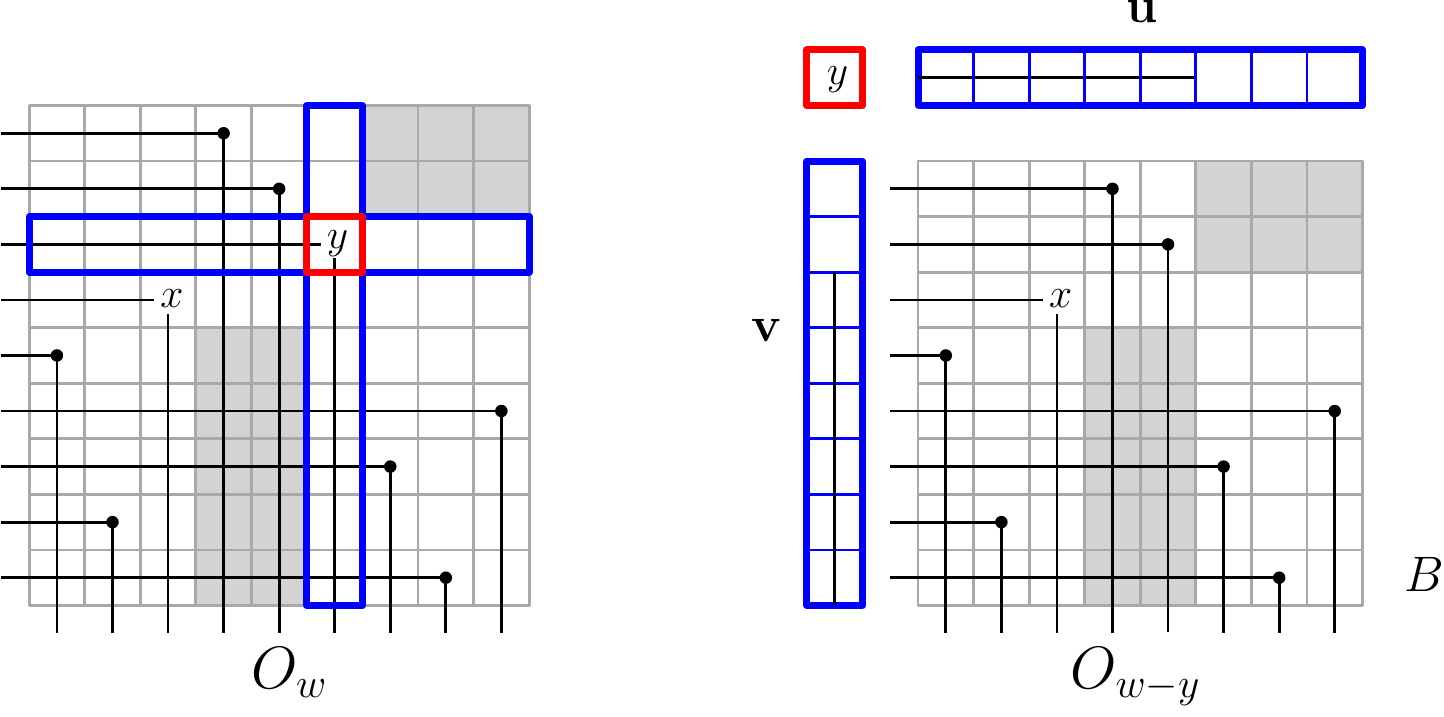}
\caption{Left: the diagram $\RSW_w$ for 
  $w=456319728$, whose first descent 
  is a light reduction pair. The key features of $\RSW_w$
  are that the two rows involved in the descent have 
  entries in the same columns and there is
  north-east rectangle (in gray) in $\RSW_w$. Right: rearrangement of
  the rows and columns of $\RSW_{w}$ yields the
  subdiagram $\RSW_{w-y}$ where $w-y=45318627$.}
\label{fig:lrp}
\end{center}
\end{figure}

\subsection{End of proof of Theorem~\ref{thm:matvspoin}}

Finally, in this section we put the preceding results together in order to
finish the inductive proof of the ``if'' part of
Theorem~\ref{thm:matvspoin}.  We induct simultaneously on the size (i.e., number of entries) and the length (i.e., number of inversions) of the permutation $w$, starting with the base case of identity permutations.

\begin{prop}
For the identity permutation $\id$, we have 
\[
\m_{\id}(q) = q^{\binom{n}{2}+\inv(\id)}P_{\id}(q^{-1})=q^{\binom{n}{2}}.
\]
\end{prop}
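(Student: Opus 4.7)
The plan is straightforward: compute both sides of the claimed equality directly from the definitions, since for the identity permutation everything is very explicit.

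First, I would evaluate the right-hand side. Since $\id$ has no inversions, $\inv(\id) = 0$. The only permutation satisfying $u \preceq \id$ in the Bruhat order is $\id$ itself (of length $0$), so $P_{\id}(t) = 1$. Therefore
\[
q^{\binom{n}{2}+\inv(\id)}P_{\id}(q^{-1}) = q^{\binom{n}{2}} \cdot 1 = q^{\binom{n}{2}}.
\]

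Next, I would compute $\m_{\id}(q)$ directly. From the definition of the SW diagram, $\RSW_{\id}$ consists of the pairs $(i, w_j)=(i,j)$ with $i<j$ (and $w_i<w_j$ automatically), so $\RSW_{\id}$ is exactly the strictly upper-triangular part of $[n]\times[n]$. Hence $\overline{\RSW_{\id}}$ is the (weakly) lower-triangular part, and $\M(n, \RSW_{\id})$ is the set of invertible lower-triangular $n \times n$ matrices over $\F_q$. Such a matrix is invertible if and only if all $n$ diagonal entries are nonzero, giving $(q-1)^n$ choices on the diagonal and $q^{\binom{n}{2}}$ arbitrary choices strictly below it. Thus $\mat_{\id}(q) = (q-1)^n q^{\binom{n}{2}}$, and dividing by $(q-1)^n$ yields
\[
\m_{\id}(q) = q^{\binom{n}{2}},
\]
matching the right-hand side. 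This completes the proof, and supplies the base case for the induction that proves Theorem~\ref{thm:matvspoin}. There is no real obstacle here; the only thing to check carefully is the shape of $\RSW_{\id}$, which follows immediately from the definition in Section~1.
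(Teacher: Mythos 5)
Your proof is correct and takes essentially the same approach as the paper: identify $\RSW_{\id}$ as the strictly upper-triangular part so that $\M(n,\RSW_{\id})$ is the set of invertible lower-triangular matrices, count them as $(q-1)^n q^{\binom{n}{2}}$, and observe $P_{\id}(t)=1$; your write-up is simply more detailed than the paper's one-line remark.
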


\begin{proof}
The result is trivial:
$(q - 1)^n \cdot \m_{\id}(q)=(q-1)^n q^{\binom{n}{2}}$ is the number of invertible $n\times n$
lower triangular matrices and $P_w(t)=1$.
\end{proof}

Now suppose that $w$ is a  Gasharov--Reiner permutation.  We have by
Proposition~\ref{prop:alwaysredpair} that either the first descent of $w$
or the first descent of $w^{-1}$ is a reduction pair.  (Note that
$w^{-1}$ is also Gasharov--Reiner.) For any permutation $w$ it is well-known that
$P_w(t)=P_{w^{-1}}(t)$, and by
\cite[Prop.\ 5.2(ii)]{KLM} the diagrams $\RSW_w$ and $\RSW_{w^{-1}}$ are
  rearrangements of each other and so  $\m_w(q)=\m_{w^{-1}}(q)$. Thus without loss of generality we may assume that the first descent of $w$ is a reduction pair.

\subsubsection{The case of a light reduction pair}

If the first descent of $w$, involving the entries $y = (i, w_i)$ and $x = (i + 1, w_{i + 1})$, is a light reduction pair then by
\eqref{LRP prop equation for matrices}
and induction we have
\begin{align*}
\m_w(q) & = q \cdot \m_{s_iw}(q) + q^{n - 1}\m_{w - y}(q) \\
& = q \cdot q^{\binom{n}{2} + \inv(s_iw)} P_{s_iw}(q^{-1}) + q^{n - 1}\cdot q^{\binom{n - 1}{2} + \inv(w - y)} P_{w - y}(q^{-1}).
\end{align*}
Then from \eqref{LRP prop equation for Poincare} it follows that
\begin{align*}
\m_w(q) & = q^{\binom{n}{2} + \ell(w)} \left(P_{s_iw}(q^{-1}) + q^{-(\inv(w) - \inv(w - y))} P_{w - y}(q^{-1})\right) \\
& = q^{\binom{n}{2} + \ell(w)} P_{w}(q^{-1}),
\end{align*}
as desired.

\subsubsection{The case of a heavy reduction pair}

Suppose the first descent of $w$ is a heavy reduction pair.  We first consider the case $i = j$, i.e., that $w_1 < \ldots < w_{i - 1} < w_{i + 1} < w_i$.  In this case, the definition of heavy reduction pairs implies that
\[
w = 1 \cdots (i - 1) \; w_i \; i \; \tau
\] 
for some permutation $\tau$ of $[n]\smallsetminus \{1,\ldots, i, w_i\}$.  Thus, we have that $s_iw = 1 \cdots i \, w_i \, \tau$ and that $w - x$ is order-isomorphic to $1 \cdots (i - 1) \, w_i\, \tau$, so $P_{s_iw}(t) = P_{w - x}(t)$.  Similarly, we have that $w - y$ is order-isomorphic to $1 \cdots i \, \tau$ and that $w - x - y$ is order-isomorphic to $1 \cdots (i - 1)\, \tau$, so $P_{w - x - y}(t) = P_{w - y}(t)$.  Then Equation~\eqref{eq:Poincare HRP recursion} reduces to 
\[
P_w(t) = (1 + t)\cdot P_{w - x}(t).
\]
Moreover, when $i = j$ we have $v(w) = w - x$, so Equation~\ref{eq:matrix HRP recursion} reduces to
\[
\m_w(q) = \m_{s_iw}(q) + q^n \cdot \m_{w - x}(q).
\]
Thus, by induction we have
\begin{align*}
\m_w(q) & = \m_{s_iw}(q) + q^n \cdot \m_{w - x}(q) \\
& = q^{\binom{n}{2} + \ell(s_iw)}P_{s_iw}(q^{-1}) + q^n \cdot q^{\binom{n - 1}{2} + \ell(w - x)}P_{w - x}(q^{-1}) \\
& = \left(q^{\binom{n}{2} + \ell(w) - 1} + q^{n + \binom{n - 1}{2} + \ell(w) - 1}\right)P_{w - x}(q^{-1}) \\
& = q^{\binom{n}{2} + \ell(w)}(q^{-1} + 1)P_{w - x}(q^{-1}) \\
& = q^{\binom{n}{2} + \ell(w)} P_w(q^{-1}),
\end{align*}
as desired.

Finally, we are left with the case that $i > j$.  In this case, the three permutations $w$, $s_iw$ and $w - y$ have first descents in positions $i$, $i -1$ and $i - 1$, respectively, and satisfy the hypotheses of Proposition~\ref{prop:HRP}.  Thus, applying Proposition~\ref{prop:HRP} to each of these permutations and rearranging gives
\begin{equation}
\label{eq:equations for vs}
\begin{aligned} 
q^{n}  \m_{v(w)}(q)     & = \m_w(q)      - \m_{s_iw}(q), \\
q^{n}  \m_{v(s_i w)}(q)  & = \m_{s_iw}(q) - \m_{s_{i-1}s_i w}(q),  \quad \textrm{ and }\\
q^{n-1} \m_{v(w - y)}(q)  &= \m_{w-y}(q)   - \m_{s_{i - 1}(w-y)}(q).
\end{aligned}
\end{equation} 
To make use of these equations, we need some basic properties of the permutation $v(w)$.

\begin{prop}
\label{prop:properties of v}
Suppose that $w \in \Sn{n}$ is a Gasharov--Reiner permutation whose first descent, involving the entries $(i, w_i)$ and $(i + 1, w_{i + 1})$, is a heavy reduction pair.  If $i > j$ then, with $v = v(w)$ as in \eqref{definition of v}, we have that $v \in \GR{n - 1}$ and the first descent of $v$ is in position $i - 1$ and is a light reduction pair.
\end{prop}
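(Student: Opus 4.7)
My plan is to verify the three claims in sequence, using the explicit form of $w$ given by Remark~\ref{hrp properties remark}: positions $1, \ldots, j-1$ hold $1, \ldots, j-1$; positions $j, \ldots, i$ hold the consecutive increasing run $w_i-i+j, w_i-i+j+1, \ldots, w_i$; and $w_{i+1} = j$. Substituting into definition~\eqref{definition of v} and renormalizing (subtracting $1$ from every value greater than $j$), I find that in $v$: positions $1, \ldots, j-1$ hold $1, \ldots, j-1$; positions $j, \ldots, i-1$ hold the increasing run $w_i-i+j, \ldots, w_i-1$; position $i$ holds $V_\ast := w_i-i+j-1$; and positions $i+1, \ldots, n-1$ hold the renormalization of $w_{i+2}, \ldots, w_n$. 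Because positions $1, \ldots, i-1$ of $v$ are strictly increasing with $v_{i-1} = w_i-1 > V_\ast = v_i$ (using $i > j$), the first descent of $v$ sits at position $i-1$. The two conditions of Definition~\ref{definition of reduction pairs} for a light reduction pair then follow by inspection: (a) since $v$ is increasing up to position $i-1$, and (b) since each value strictly between $V_\ast$ and $v_{i-1}$ lies in $\{w_i-i+j, \ldots, w_i-2\} = \{v_j, \ldots, v_{i-2}\}$ and hence cannot appear at any position past $i$ in $v$.

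To show $v \in \GR{n-1}$, I argue by contradiction: suppose $v$ contains one of the forbidden patterns at positions $q_1 < \cdots < q_k$. If none of the $q_s$ equals $i$, then the images $q_s \mapsto q_s$ (when $q_s < j$) or $q_s \mapsto q_s+1$ (otherwise) preserve the relative order of positions and (up to the renormalization shift) the relative order of values, so $w$ contains the same forbidden pattern, contradicting $w \in \GR{n}$. If some $q_{s^\ast} = i$, I use the rank constraints distilled from the explicit form: values at positions $< i$ of $v$ lie in $\{1, \ldots, j-1\} \cup \{w_i-i+j, \ldots, w_i-1\}$; $v_i = V_\ast$; and values at positions $> i$ of $v$ lie in $\{j, \ldots, V_\ast-1\} \cup \{w_i, \ldots, n-1\}$. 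Together with the rank order imposed by the forbidden pattern, these constraints will either rule out the configuration or force the analogous subsequence of $w$ (now using position $j$ with value $w_j = V_\ast+1$ in place of position $i$ of $v$) to itself realize a forbidden pattern in $w$.

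The main obstacle is carrying out this case analysis explicitly for each of the four forbidden patterns and each choice of $s^\ast$. For the pattern $4231$ it plays out as follows. The case $s^\ast = 1$ lifts directly to a $4231$ in $w$ at positions $j < q_2+1 < q_3+1 < q_4+1$. The case $s^\ast = 2$ forces $v_{q_1} \leq w_i-1 < w_i \leq v_{q_3}$, contradicting $v_{q_3} < v_{q_1}$ required by the pattern. The case $s^\ast = 3$ forces $v_{q_2} \leq j-1 < j \leq v_{q_4}$, contradicting $v_{q_4} < v_{q_2}$. The case $s^\ast = 4$ places $q_1 < q_2 < q_3$ all in $[1, i-1]$ with values $> V_\ast$, hence within the strictly increasing block $v_j, \ldots, v_{i-1}$, contradicting $v_{q_3} < v_{q_1}$. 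The longer patterns $35142$, $42513$, and $351624$ yield to analogous (but more extensive) rank-based case analyses.
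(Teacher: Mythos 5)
Your argument is correct, but it takes a noticeably more computational route than the paper's for the claim $v \in \GR{n-1}$.  The paper argues structurally: after showing that a hypothetical forbidden pattern $\mu$ in $w'$ cannot use any of the small increasing prefix $w_1, \ldots, w_{j-1}$, it shows (by deleting $w_j$ and, separately, all of $w_{j+1}, \ldots, w_i$ from $w'$ and observing that what remains is a subsequence of $w$) that $\mu$ must contain $w_j$ together with at least one of $w_{j+1}, \ldots, w_i$.  The first descent of $\mu$ then lands at the boundary between these two blocks, and the values of the prefix of $\mu$ through the bottom of that descent are precisely those lying in the consecutive interval $[w_j, w_i]$, while later values lie outside it.  In the pattern permutation this prefix therefore forms an interval of integers — but the shortest prefix through the bottom of the first descent of each of $4231$, $35142$, $42513$, $351624$ (namely $42$, $351$, $42$, $351$, respectively) is never an interval.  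One structural observation thus disposes of all four patterns uniformly.  Your approach — lifting any pattern not using position $i$ directly to $w$, and otherwise bounding ranks from the explicit form of $v$ separately for each pattern and each placement of position $i$ — is sound, and your $4231$ analysis is correct, but it entails on the order of $4+5+5+6$ cases, of which you carry out only the four for $4231$ and assert the rest are ``analogous.''  (I checked: they do go through.)  Both treatments agree on, and your write-up usefully spells out, the remaining claims — that the first descent of $v$ sits at position $i-1$ and is a light reduction pair — which the paper dismisses as easy.  In short: same conclusion, genuinely different mechanism; the paper's interval-prefix criterion buys uniformity across patterns, while your rank bounds are more elementary but pattern-by-pattern.
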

\begin{proof}
Certainly $v \in \Sn{n - 1}$.  To show that $v$ avoids the four bad patterns, it suffices to show that the sequence
\[
w' = w_1 \, w_2 \, \cdots \, w_{j - 1} 
\quad
w_{j + 1} \, w_{j + 2} \, \cdots \, w_{i}
\quad w_j \quad   
w_{i + 2} \, w_{i + 3} \, \cdots \, w_{n}
\]
(to which $v$ is order-isomorphic) avoids them.

Suppose that $w'$ contains one of the four forbidden patterns, and let $\mu$ be a subsequence of $w'$ order-isomorphic to one of these patterns.  We will derive a contradiction.

By Remark~\ref{hrp properties remark}(i), the entries $w_1, \ldots, w_{j - 1}$ are all smaller than all other entries of $w'$ and occur in increasing order, but none of the four forbidden patterns begins with its smallest element.  Thus, $\mu$ does not contain any of these entries.

Removing the entry $w_j$ from $w'$ leaves a sequence order-isomorphic to a subsequence of $w$.  Since $w$ avoids the four patterns in question, it follows that $\mu$ must contain the entry $w_j$.  The same is true if one removes (simultaneously) the entries $w_{j + 1}, w_{j + 2}, \ldots, w_i$ from $w'$, so $\mu$ must contain at least one of these entries.

Thus, the first descent of $\mu$ occurs between one of the values $w_{j + 1}, \ldots, w_i$ and $w_j$.  Therefore, by Remark~\ref{hrp properties remark}(ii), in the permutation order-isomorphic to $\mu$, the entries of the shortest prefix including the bottom of the first descent form an interval.  However, none of the four forbidden patterns have this property.  This is a contradiction.  Thus $v$ is Gasharov--Reiner.

Finally, it is easy to see that the first descent of $v$ is in position $i - 1$ and is a light reduction pair.
\end{proof}

Continuing with the notation of the preceding proof, we have that the first descent of $v(w)$ is between the entries $y' = (i - 1, w_i - 1)$ and $x' = (i, w_j - 1)$.  Since this descent is a light reduction pair, we may apply \eqref{LRP prop equation for matrices} to conclude that
\begin{equation}
\label{matrix LRP recursion on v}
\m_{v(w)}(q) = q \m_{s_{i - 1}v(w)}(q) + q^{n - 2}\m_{v(w) - y'}(q).
\end{equation}
It is easy to check that
\[
v(s_i w)  
= s_{i-1} \cdot v(w) , \qquad
v(w - y) 
= v(w) - y',        
\quad \text{ and } \quad
s_{i - 1}(w-y) 
=  s_iw - y'.
\]
Thus, we may multiply
\eqref{matrix LRP recursion on v} through by $q^n$ and substitute from~\eqref{eq:equations for vs} to conclude that
\begin{equation} 
\label{final HRP matrix recursion}
\m_w(q) - \m_{s_iw}(q) = q(\m_{s_iw}(q) - \m_{s_{i-1}s_iw}(q)) + q^{n-1}(\m_{w-y}(q) - \m_{s_iw - y'}(q)).
\end{equation}

Now we derive the same recursion for Poincar\'e polynomials.  The first descent of $s_iw$ is a heavy reduction pair involving the entries $y' = (i - 1, w_i - 1)$ and $x'' = (i, w_{i + 1})$, so by \eqref{eq:Poincare HRP recursion} we have
\begin{multline}
\label{eq:HRPsw}
P_{s_iw}(t) = P_{s_{i - 1}s_i w}(t) + t^{\inv(s_iw) - \inv(s_iw - x'')} P_{s_iw - x''}(t) \\+ t^{\inv(s_iw) - \inv(s_iw - y')}P_{s_iw - y'}(t) - t^{\inv(s_i w) - \inv(s_iw - x'' - y')}P_{s_iw - x'' - y'}(t).
\end{multline}
It is easy to check that 
\[
s_iw - x''  = w-x \qquad \text{ and } \qquad s_iw-x''-y' = w-x-y,
\]
and so subtracting $t$ times \eqref{eq:HRPsw} from \eqref{eq:Poincare HRP recursion} (keeping in mind that $\inv(s_iw) = \inv(w) - 1$) yields
\[
P_w(t) - tP_{s_iw}(t) = \left(P_{s_iw}(t) - t P_{s_{i-1}s_iw}(t)\right) 
+ t^{\inv(w)} \cdot (t^{- \inv(w- y)}P_{w-y}(t) - t^{- \inv(s_iw - y')}P_{s_iw - y'}(t)).
\]
Finally, we make the substitution $t = q^{-1}$ and multiply through by $q^{\binom{n}{2} + \inv(w)}$ to conclude
\begin{multline*}
q^{\binom{n}{2} + \inv(w)} P_w(q^{-1}) - q^{\binom{n}{2} + \inv(s_iw)} P_{s_iw}(q^{-1})
= \\ q\left(q^{\binom{n}{2} + \inv(s_i w)}P_{s_iw}(q^{-1}) -q^{\binom{n}{2} + \inv(s_{i - 1}s_i w)}  P_{s_{i-1}s_iw}(q^{-1})\right) \\
+ q^{n - 1}\left(q^{\binom{n - 1}{2}+ \inv(w- y)}P_{w-y}(q^{-1}) - q^{\binom{n - 1}{2} + \inv(s_iw - y')}P_{s_iw - y'}(q^{-1})\right).
\end{multline*}
Comparing with \eqref{final HRP matrix recursion} and applying the inductive hypothesis gives the desired result.

\section{Further remarks and questions}
\label{further remarks}

\subsection{Bijective proofs}
\label{sec:bijective proofs}

One can give an alternate proof of the first part of Theorem~\ref{theorem:AO-RA} via a recursive argument: given $w \in \Sn{n}$, one produces permutations $w' \in \Sn{n}$ and $w'' \in \Sn{n - 1}$ such that the inversion graphs $G_{w'}$ and $G_{w''}$ are isomorphic respectively to the graphs that we get by deleting or contracting a particular edge in $G_w$, and also such that rook placements on the south-west diagrams $\RSW_{w'}$ and $\RSW_{w''}$ correspond naturally to rook placements on $\RSW_{w}$ where a particular cell respectively does not or does contain a rook.  (See Figure~\ref{figexthm1}, and \cite{LMFPSAC} for more details.)  Then the result follows from the deletion-contraction recursion for acyclic orientations.  In principle, this can be unravelled (noncanonically) to give a
bijection. Is it possible instead to give a single, explicit (i.e.,
nonrecursive) bijection between rook placements avoiding $\RSW_w$ and
acyclic orientations of $G_w$? C.f.~Appendix~\ref{sec:AOvsRP}.

\begin{figure}
\begin{center}
\subfigure[]{
\includegraphics{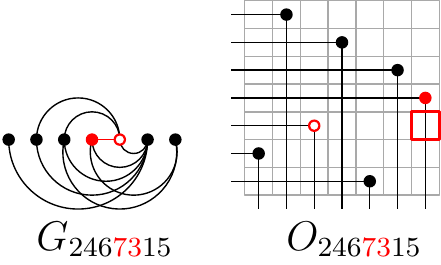}
\label{orig}
}
\quad
\subfigure[]{
\includegraphics{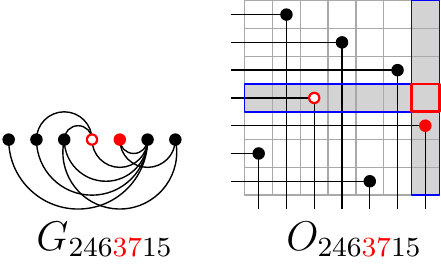}
\label{del}
}
\quad
\subfigure[]{
\includegraphics{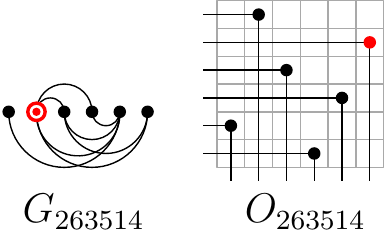}
\label{contract}
}
\caption{The inversion graph and SW diagram for (a) the permutation $2467315$, (b) the result of deletion, and (c) the result of contraction.}
\label{figexthm1}
\end{center}
\end{figure}



\subsection{Other types}

The acyclic orientations of the inversion graph of a permutation $w$ are in
correspondence with the regions of the hyperplane
arrangement $\mathcal{A}_w$ consisting of the hyperplanes $x_i - x_j =
0$ in $\mathbb{R}^{n}$ for each inversion $(i,j)$ of $w$. This arrangement has a
natural analogue when the symmetric group $\Sn{n}$ is replaced by any
Weyl group $W$. In this setting, Hultman \cite{H} has proved an analogue of 
Theorem~\ref{thm:HLSS} of Hultman--Linusson--Shareshian--Sj\"ostrand. Is
there an analogue of rook placements avoiding a
diagram associated to an element $w$ in $W$ that allows one to extend
Theorem~\ref{theorem:AO-RA} or Theorem~\ref{thm:matvspoin} to this
context? Barrese and Sagan (personal communication)  have made some
initial progress on this direction.

\subsection{A nicer recurrence for Poincar\'e polynomials}
As a consequence of Theorem~\ref{thm:matvspoin} and
Equation~\eqref{eq:matrix HRP recursion}, the Poincar\'e polynomial
for a Gasharov--Reiner permutation $w$ with a heavy reduction pair in
its first descent satisfies 
\[
P_w(t) = t\cdot P_{s_i w}(t) + P_{v(w)}(t).
\]
This latter recursion is arguably simpler than \eqref{eq:Poincare HRP
  recursion}. Is it possible to prove such a result directly, without
going through the painful contortions following the proof of
Proposition~\ref{prop:properties of v}? For example, can one exhibit a
bijection between the relevant Bruhat intervals that shifts lengths appropriately?

\subsection{Connection with Schubert varieties}

Theorem~\ref{thm:matvspoin} gives a relationship between a function
counting invertible matrices and a Poincar\'e polynomial.  The
Poincar\'e polynomial $P_w(t)$ has a geometric, as well as
combinatorial, meaning: it gives the decomposition of the Schubert
variety $X_w$ over $\CC$ into Schubert cells, or equivalently it
counts $\F_q$ points in $X_w$. In addition, the Gasharov--Reiner
permutations $w$ characterize the Schubert varieties $X_w$ {\em defined by
inclusions} \cite{GasharovReiner}.  (For an overview of connections between Schubert varieties and combinatorics, see \cite{AbeBilley}.)  Thus, it seems natural to suppose that there should be an explanation for Theorem~\ref{thm:matvspoin} involving the associated Schubert varieties.  At present, we have no such explanation for Gasharov--Reiner
permutations.  

However, it is possible to give a simple proof in the
special case of permutations avoiding the pattern $312$.  In this
case, the diagram $\RSW_w$ is a (reflection of a) Young diagram.  Thus
its complement $\overline{\RSW_w}$ is also a partition shape.  The
Schubert variety $X_w$ is one of Ding's \emph{partition varieties},
and Ding showed \cite[Thm.~33]{Ding} that the Poincar\'{e} polynomial
of this variety is equal to the Garsia--Remmel \emph{rook polynomial}
\cite{GarsiaRemmel}.  Next, work of Haglund \cite[Thm.~1]{Hag} shows
that for a partition shape, the rook polynomial and matrix count
$\m_w(q)$ are equal up to powers of $q$.  Finally, $312$-avoiding
permutations avoid the patterns $3412$ and $4231$, so by the work of
Lakshmibai--Sandya \cite{LakshmibaiSandhya} and Carrell--Peterson
\cite{CarrellPeterson} (Lemma~\ref{lemma:CP}) we may replace $P_w(q)$ with $q^{\inv(w)}P_w(q^{-1})$ to complete the proof.

The following result for smooth permutations follows easily from
Theorem~\ref{thm:matvspoin}; it was independently proven by
Linusson--Shareshian (personal
communication). Recall that $w \in \Sn{n}$ is smooth if $w$ avoids the
patterns $3412$ and $4231$.
\begin{cor} \label{cor:poinmatsmooth}
Let $w$ be a permutation in $\Sn{n}$.  We have
\[
\m_w(q) = q^{\binom{n}{2}}P_w(q)
\]
if and only if $w$ is smooth.
\end{cor}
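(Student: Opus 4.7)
The plan is to derive Corollary~\ref{cor:poinmatsmooth} as a short consequence of Theorem~\ref{thm:matvspoin}, Corollary~\ref{corollary:RP-Bru} (equivalently, Theorem~\ref{thm:HLSS}), and Lemma~\ref{lemma:CP}. No new recursive machinery should be needed; the work is simply to combine these three results in the right order.

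For the forward direction, I would first verify that every smooth permutation is a Gasharov--Reiner permutation. Smoothness means avoidance of $3412$ and $4231$, so I need only check that each of the patterns $35142$, $42513$, $351624$ contains one of these as a subpattern. In each case the subsequence supported on positions $1,2,3,5$ (or $1,3,4,5$ in the case of $42513$) realizes the pattern $3412$; for example, in $35142$ the entries at positions $1,2,3,5$ are $3,5,1,2$, which is order-isomorphic to $3412$. Thus $\{3412,4231\}$-avoidance implies $\{4231,35142,42513,351624\}$-avoidance. Given smoothness, Theorem~\ref{thm:matvspoin} then yields $\m_w(q) = q^{\binom{n}{2}+\inv(w)}P_w(q^{-1})$, and Lemma~\ref{lemma:CP}(c) says that $P_w$ is palindromic, so $P_w(q^{-1}) = q^{-\inv(w)}P_w(q)$. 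Substituting gives $\m_w(q) = q^{\binom{n}{2}}P_w(q)$, as required.

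For the converse, suppose $\m_w(q) = q^{\binom{n}{2}}P_w(q)$. Evaluating at $q=1$ and using \eqref{eq:modq} together with Theorem~\ref{theorem:AO-RA} gives $\m_w(1) = \RP(\RSW_w) = \AO(G_w)$, while $P_w(1) = \#[\id,w]$. Hence $\AO(G_w) = \#[\id,w]$, so by Theorem~\ref{thm:HLSS} (or Corollary~\ref{corollary:RP-Bru}) the permutation $w$ is Gasharov--Reiner. Then Theorem~\ref{thm:matvspoin} applies and gives $\m_w(q) = q^{\binom{n}{2}+\inv(w)}P_w(q^{-1})$; comparing with the hypothesis produces
\[
P_w(q) = q^{\inv(w)}P_w(q^{-1}),
\]
so $P_w$ is palindromic. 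Lemma~\ref{lemma:CP} now yields that $w$ is smooth.

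The only step that requires any thought at all is the pattern-containment check establishing that smoothness implies the Gasharov--Reiner condition; everything else is a purely formal assembly of results already proved in the paper. There is no serious obstacle here.
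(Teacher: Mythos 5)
Your proof is correct and follows essentially the same route as the paper's: show smooth implies Gasharov--Reiner, apply Theorem~\ref{thm:matvspoin} and Lemma~\ref{lemma:CP} for the forward direction, and for the converse deduce Gasharov--Reiner from the $q=1$ specialization (via Corollary~\ref{corollary:RP-Bru}) and then read off palindromicity. You spell out the pattern-containment check (that $35142$, $42513$, $351624$ each contain $3412$) which the paper leaves implicit, but the argument is otherwise identical.
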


\begin{proof}
To prove the ``if'' direction, first compare the definitions to see that if $w$ is smooth then $w$ is also Gasharov--Reiner. Thus it follows by Theorem~\ref{thm:matvspoin} that $\m_w(q)  = q^{\binom{n}{2}+\ell(w)} P_w(q^{-1})$. Then by Lemma~\ref{lemma:CP} we have that $q^{\binom{n}{2}+\ell(w)} P_w(q^{-1})= q^{\binom{n}{2}} P_w(q)$.

Next we prove the ``only if'' direction.  The argument in Section~\ref{sec:poinmat} following the statement of Theorem~\ref{thm:matvspoin} establishes that $w$ must be Gasharov--Reiner.  So, again using
Theorem~\ref{thm:matvspoin}, we have that $\m_w(q) =
q^{\binom{n}{2}+\ell(w)}P_w(q^{-1})$. This fact and the hypothesis
imply that $P_w(q)$ is palindromic, and by Lemma~\ref{lemma:CP} it
follows that $w$ is smooth.
\end{proof}

\subsection{Positivity in matrix-counting for other permutations}
Computational evidence suggests \cite[Conj.~5.1]{KLM} that
$\m_w(q)$ is a polynomial in $\mathbb{N}[q]$ for all permutations $w$,
not just for Gasharov--Reiner permutations.  (In general, the number of
invertible matrices over $\F_q$ with restricted support is not
necessarily a polynomial in $q$ \cite[\S  8.1]{Stem}.) It would be very interesting if one could explain this fact geometrically, e.g., via some sort of cellular decomposition of the set of matrices counted by $\m_w(q)$.  A more naive approach is to look for a 
recursion along the lines of Equations~\eqref{eq:matrix HRP recursion} and~\eqref{LRP
  prop equation for matrices} that is valid for all permutations. The next example gives some discouraging evidence for the latter approach.

\begin{example}
For $w=4312$, we have that 
\[
\m_{4312}(q) - \m_{3412}(q) = q^{11}+2q^{10}+2q^9-q^7,
\]
which has a negative coefficient.  Curiously, for all $w$ in $\Sn{n}$ for
$n\leq 7$ we have that $\m_w(q)-q\cdot \m_{s_iw}(q) \in \mathbb{N}[q]$.  However,
for $w=3412$, the difference 
\[
\m_{3412}(q) - q\cdot \m_{3142}(q) = 2q^8+3q^7+q^6
\]
is not of the form $q^a \cdot \m_{u}(q)$ for any integer $a$ and permutation $u$.
\end{example}

\begin{remark}
Note that if $w$ is \emph{not} Gasharov--Reiner then $\#[\id,w] > \AO(G_w)$ by
Theorem~\ref{thm:HLSS}.  The $q$-analogue of this fact is the following conjecture in \cite{KLM}:
for all $w$ the difference
$q^{\binom{n}{2}+\inv(w)}P_{w}(q^{-1})-\m_w(q)$ belongs to
$\mathbb{N}[q]$. On the other hand, there is no inequality of coefficients between $\m_w(q)$ and $q^{\binom{n}{2}}P_w(q)$ for the
non-smooth permutations. For example, $\m_{3412}(q)=q^{10}+3q^9+5q^8+4q^7+q^6$ and
$q^6P_{3412}(q)=q^{10}+4q^9+5q^8+3q^7+q^6$ are not comparable coefficientwise.
\end{remark}

\begin{remark}
For $n \leq 7$, computations show that the polynomials $\m_w(q)$ are \emph{unimodal} for all $w \in \Sn{n}$, i.e., their coefficients first increase, then decrease.  However, they are not generally \emph{log-concave}: when $w=5673412$ we have that the sequence of coefficients of $\m_w$ is $(1, 4, 17, 52, 116, 203, 289, 346, 355, 316, 246, 167, 98, 49, 20, 6, 1)$, and $4^2 < 1\cdot 17$ is a violation of log-concavity.
\end{remark}

\subsection{Matrices of lower rank}
By \cite[Prop.~5.1]{LLMPSZ}, the counting function for matrices of rank
$r$ with support avoiding a permutation diagram $\RSW_w$ is a
$q$-analogue of placements of $r$ non-attacking rooks avoiding
$\RSW_w$.  We conjecture \cite[Conj.~5.1]{KLM} that this function belongs to
$(q-1)^r\mathbb{N}[q]$; what are its coefficients counting?  Are there
corresponding ``lower rank'' analogues of any other members of
Postnikov's ``zoo'', either for all permutations or for some nice subclass (e.g., Grassmannian permutations, smooth permutations, Gasharov--Reiner permutations)?

\subsection{Counting and $q$-counting fillings of permutation diagrams} \label{sec:fillings}

Above, we have studied percentage-avoiding fillings of the SE diagram $\RSE_w$ for a permutation $w$.  When $w = w_\lambda$ is Grassmannian, $\RSE_w$ is the Young diagram of $\lambda$ in French notation.  For such shapes, percentage-avoiding fillings are in bijection with a large family of similarly restricted fillings (see \cite{AS, MJV}), including the \emph{$\sammag$-fillings}%
\footnote{
In \cite{AP}, Postnikov used English notation for partitions, while we
use French notation; thus, his ``$\sLe$ diagrams'' are equivalent to our
$\sammag$-fillings and what would be his ``$\sEl$ diagrams'' 
correspond to our $\Gamma$-fillings. } studied by Postnikov.  Here, we mention some additional results and conjectures relating to these other restricted fillings of the diagram $\RSE_w$ when $w$ is not necessarily Grassmannian.

Given a binary filling $f$ of a diagram $D$, we say that $f$ is a $\sLe$-filling if it avoids the patterns \raisebox{-5pt}{\includegraphics{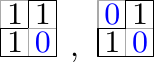}}.  Similarly, we say that $f$ is
\begin{compactitem}
\item a $\sGamma$-filling if it avoids the patterns \raisebox{-5pt}{\includegraphics{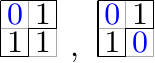}},
\item a $\sEl$-filling if it avoids the patterns \raisebox{-5pt}{\includegraphics{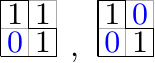}}, and
\item a $\sammag$-filling if it avoids the patterns 
  \raisebox{-5pt}{\includegraphics{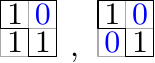}}.
\end{compactitem}

In this section, we focus on the case that the diagram $\RSE_w$ has a particular nice structure.  We say that a diagram $D$ has the {\bf south-east (SE) property} if whenever
$(i,j)$, $(i',j)$ and $(i,j')$ are in $D$ with $i'>i$ and $j'>j$ then
$(i',j')$ is also in $D$.  For such diagrams, condition~(ii) in Definition~\ref{PPAF definition} is never relevant, and so pseudo-percentage avoidance reduces to percentage avoidance in this case.

It is easy to see that the SE diagram $\RSE_{321}$ fails to have the SE property, and consequently that $\RSE_w$ also fails to have the SE property for any permutation $w$ containing $321$ as a pattern.  The converse of this statement is also true:

\begin{proposition}[{\cite[Prop.~2.2.13]{LM}}]\label{prop:Ew321}
If $w$ avoids $321$ then $\RSE_w$ is, up to removing rows and columns that do not intersect
$\RSE_w$, a skew Young shape in French notation. In this case $\RSE_w$
has the SE property.
\end{proposition}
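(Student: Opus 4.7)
My plan is to exploit the classical structure theorem for $321$-avoiding permutations, which says that $w$ avoids $321$ if and only if both the subsequence of left-to-right maxima of $w$ and the subsequence of non-left-to-right maxima of $w$ are increasing. This structural fact lets me decompose $\RSE_w$ cleanly in terms of these two subsequences and directly read off the shape.

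First I would identify the nonempty rows and columns of $\RSE_w$. Row $i$ is nonempty precisely when there exists $j>i$ with $w_j<w_i$, and if $w$ avoids $321$ this forces $i$ to be a left-to-right maximum position: any $k<i$ with $w_k>w_i$ would combine with such a $w_j$ to produce a forbidden $321$ pattern at positions $k<i<j$. Dually, column $c=w_j$ is nonempty iff $w_j$ is not a left-to-right maximum.

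Next, I would parameterize. Let $r_1<\cdots<r_s$ be the nonempty row positions (all left-to-right maxima, so automatically $w_{r_1}<\cdots<w_{r_s}$) and let $q_1<\cdots<q_m$ be the non-left-to-right-maximum positions (with $w_{q_1}<\cdots<w_{q_m}$ by the nontrivial half of the structure theorem). After removing empty rows and columns, the reindexed cell in row $a$, column $b$ corresponds to the original cell $(r_a,w_{q_b})$, which lies in $\RSE_w$ iff $r_a<q_b$ and $w_{q_b}<w_{r_a}$. Because $q_b$ is strictly increasing in $b$, the condition $r_a<q_b$ becomes a lower threshold $b>\mu_a:=|\{b:q_b\le r_a\}|$; because $w_{q_b}$ is strictly increasing in $b$, the condition $w_{q_b}<w_{r_a}$ becomes an upper threshold $b\le\lambda_a:=|\{b:w_{q_b}<w_{r_a}\}|$. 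Hence row $a$ is exactly the contiguous interval $[\mu_a+1,\lambda_a]$. Monotonicity $\mu_a\le\mu_{a+1}$ follows from $r_a<r_{a+1}$, and $\lambda_a\le\lambda_{a+1}$ follows from $w_{r_a}<w_{r_{a+1}}$, so both $\mu$ and $\lambda$ are weakly increasing in $a$: this is precisely a skew Young shape in French notation.

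The SE property is then essentially automatic on such a shape: if cells $(i,j),(i',j),(i,j')$ all lie in the reindexed diagram with $i'>i$ and $j'>j$, then $\mu_{i'}<j<j'$ (from the cell $(i',j)$) and $j'\le\lambda_i\le\lambda_{i'}$ (from the cell $(i,j')$ together with the monotonicity of $\lambda$), so $(i',j')$ lies in the diagram as well. Since reindexing preserves this property, $\RSE_w$ itself has the SE property. The main obstacle is the nontrivial direction of the $321$-avoider structure theorem used in step 2 — that the non-left-to-right maxima of a $321$-avoider form an increasing sequence — which itself follows from a short $321$-pattern argument; once that is in hand, the parameterization and the monotonicity checks go through routinely.
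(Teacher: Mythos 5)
Your proof is correct and complete. The paper does not actually prove this statement; it is cited to Manivel \cite[Prop.~2.2.13]{LM}. Your argument supplies a self-contained elementary proof along the standard lines: the classical decomposition of a $321$-avoider into its increasing subsequence of left-to-right maxima and increasing subsequence of non-maxima, the observation that the nonempty rows of $\RSE_w$ sit at (a subset of the) LR-maximum positions while the nonempty columns sit at the non-LR-maximum values, and the resulting contiguous-interval description $[\mu_a+1,\lambda_a]$ of each reindexed row with both endpoints weakly increasing. The monotonicity checks ($\mu_a\le\mu_{a+1}$ from $r_a<r_{a+1}$, $\lambda_a\le\lambda_{a+1}$ from $w_{r_a}<w_{r_{a+1}}$) are correct, and your two-line verification of the SE property from these inequalities is exactly right. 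One cosmetic remark: you implicitly use that each nonempty row is indeed nonempty to conclude $\mu_a<\lambda_a$, so the intervals are genuine; this is fine since you restricted to nonempty rows from the outset.
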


We start by giving a corollary of
Proposition~\ref{proposition:AO-PAF}.

\begin{cor}
\label{corollary:AO-Le}
If $w$ avoids $321$ then the number of $\sGamma$-fillings of $\RSE_w$ is equal to the number of acyclic
orientations of the inversion graph of $w$.
\end{cor}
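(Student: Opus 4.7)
The plan is to chain together three identifications already available in the excerpt. By Proposition~\ref{proposition:AO-PAF}, for any permutation $w$ the quantity $\AO(G_w)$ equals the number of pseudo-percentage-avoiding fillings of $\RSE_w$. So it suffices to produce, when $w$ avoids $321$, a bijection between pseudo-percentage-avoiding fillings and $\sGamma$-fillings of $\RSE_w$.

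The first step is a reduction from pseudo-percentage-avoidance to ordinary percentage-avoidance. By Proposition~\ref{prop:Ew321}, when $w$ avoids $321$ the diagram $\RSE_w$, after removing empty rows and columns, is a skew Young shape in French notation, and in particular it enjoys the SE property. As noted in the paragraph immediately following Proposition~\ref{prop:Ew321}, the SE property makes condition (ii) of Definition~\ref{PPAF definition} vacuous: whenever $(i,j),(i',j),(i,j')\in\RSE_w$ with $i'>i$ and $j'>j$, the SE property forces $(i',j')\in\RSE_w$, so the ``fourth corner'' cannot be a permutation entry of $w$. Hence the pseudo-percentage-avoiding fillings of $\RSE_w$ are exactly the percentage-avoiding fillings.

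The second step is to pass from percentage-avoiding fillings to $\sGamma$-fillings. For Young diagrams this equivalence is part of the web of bijections established by Spiridonov \cite[\S 4]{AS} and Josuat-Verg\`es \cite[\S 4]{MJV} relating the various families of restricted fillings ($\sLe$-, $\sGamma$-, $\sEl$-, $\sammag$-, and percentage-avoiding). Since each forbidden pattern in each of these families is purely local---involving only the four corners of an axis-aligned rectangle contained in the diagram---the bijections extend verbatim to any skew Young shape, and in particular to $\RSE_w$ when $w$ avoids $321$. Composing this bijection with the identification from the first step yields the desired equality.

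The only substantive point to verify is that the Spiridonov/Josuat-Verg\`es bijection between percentage-avoiding and $\sGamma$-fillings, which is usually stated for straight Young diagrams, indeed carries over to skew shapes; I expect this to be the main (and essentially only) technical obstacle. The locality of all four-cell patterns involved makes the extension routine, but it should be checked that the algorithms in \cite{AS,MJV} never reference cells outside the shape or rely on the presence of the removed inner corner.
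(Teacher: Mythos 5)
Your proof is correct and follows exactly the same route as the paper's: apply Proposition~\ref{proposition:AO-PAF}, use Proposition~\ref{prop:Ew321} together with the SE property to collapse pseudo-percentage-avoidance to ordinary percentage-avoidance, and then invoke the Spiridonov/Josuat-Verg\`es equivalence between percentage-avoiding and $\sGamma$-fillings on skew shapes. Your closing caveat about whether the Spiridonov/Josuat-Verg\`es correspondence applies to skew shapes is prudent but unnecessary here: Spiridonov's thesis works at the level of general ``grid shapes,'' which subsume skew Young diagrams, and the paper already cites it at that level of generality.
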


\begin{proof}
By Proposition~\ref{proposition:AO-PAF}, for all $w$ the number of acyclic
orientations of the inversion graph of $w$ equals the number of
pseudo-percentage-avoiding fillings of $\RSE_w$.  
By Proposition~\ref{prop:Ew321} and the paragraphs that precede it, if $w$ avoids $321$ then a filling of $\RSE_w$ is pseudo-percentage-avoiding if and
only if it is percentage-avoiding.  Moreover, since $\RSE_w$ is a skew Young
shape, we have by work of Spiridonov \cite{AS}
(see also Josuat-Verg\`es \cite[\S 4]{MJV}) that the number of
percentage-avoiding fillings and the number of $\sGamma$-fillings of
$\RSE_w$ are equal.
\end{proof}

\begin{remark} 
\label{rem:diff321}
\label{ex:diffLeEl}
Although the number of $\sEl$-fillings and
$\sGamma$-fillings of $\RSE_w$ coincide when $w$ is Grassmannian, these numbers
can differ for other $321$-avoiding permutations.  For example,
the permutation $351624$ avoids $321$, and one can check
that there are $98$ $\sGamma$-fillings and $100$ $\sEl$-fillings of
$\RSE_{351624}$. By Theorem~\ref{prop:BruhatSkew} below, the latter
are in bijection with the elements in the interval $[\id,351624]$.
\end{remark}

\begin{figure}
\begin{center}
\subfigure[]{
\includegraphics{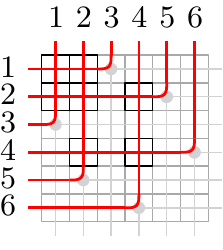}
\label{wd}
}
\quad
\subfigure[]{
\includegraphics[height=2.5cm]{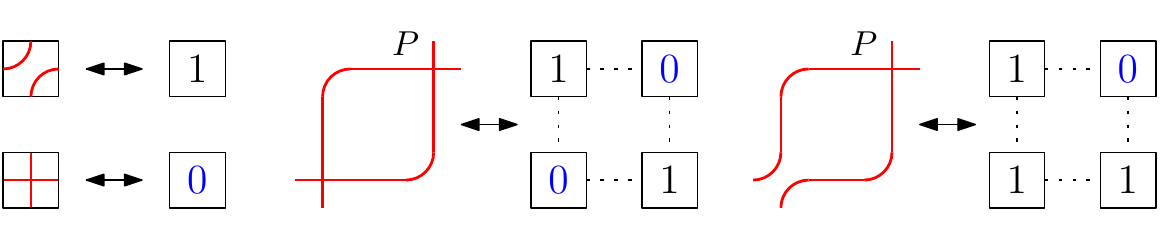}
\label{wdel}
}
\caption{(a)  The hook wiring diagram of $w=351624$ which has
  crossings in exactly the elements of $E_{351624}$, (b)
  correspondence between (un)crossings in a pipe dream and
 binary fillings; the forbidden wires for lexicographic-maximal
  pipe dreams and the corresponding $\sammag$-patterns.}
\label{wiringdiagram:ammag}
\end{center}
\end{figure}

We now focus on $\sEl$-fillings and $\sammag$-fillings.
Given a binary filling $f$ of a diagram, the size $|f|$
is the number of $1$s in the filling. 
For $w$ in $\Sn{n}$ and $\pi \in \{\sEl,\sammag\}$, let
$F_w^{\pi}(q)$ be the generating function $\sum_{f} q^{|f|}$ where
the sum is over $\pi$-fillings of $\RSE_w$. By abuse of notation,
$F^{\pi}_{\lambda}(q) = F^{\pi}_{w_\lambda}(q)$ is the generating function of
$\pi$-fillings of the Young diagram of the partition $\lambda$.  The following is a corollary of \cite[Thm.~19.1]{AP}.

\begin{theorem}[Postnikov] \label{thm:postElGrass}
For a Grassmannian permutation $w_{\lambda}$ in $\Sn{n}$ associated to a
partition $\lambda \subseteq k \times (n-k)$, we have that
$F^{\sEl}_{\lambda}(q) = F^{\scrammag}_{\lambda}(q)=q^{\ell(w_{\lambda})}P_{w_{\lambda}}(q^{-1})$. 
\end{theorem}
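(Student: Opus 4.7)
The plan is to establish the two claimed equalities separately, both as translations of existing results. First, I would prove
\[
F^{\scrammag}_{\lambda}(q) = q^{\ell(w_\lambda)} P_{w_\lambda}(q^{-1})
\]
as a direct translation of Postnikov's Theorem~19.1. As noted in the footnote from the earlier discussion of $\sLe$-diagrams, Postnikov works in English notation, and his $\sLe$-diagrams of shape $\lambda$ correspond precisely to $\sammag$-fillings in the French convention used here. Because $w_\lambda$ is Grassmannian, its SE diagram $\RSE_{w_\lambda}$ is, up to inserting empty rows and columns that contain no cells and therefore contribute nothing to any filling, the French Young diagram of $\lambda$. Postnikov's theorem records the generating function of $\sLe$-diagrams of $\lambda$ in terms of a $q$-analog of $\#[\id, w_\lambda]$; unwinding conventions (English vs.\ French, counting $0$s vs.\ $1$s) produces the factor $q^{\ell(w_\lambda)}$ and the substitution $q \mapsto q^{-1}$.

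Second, I would prove $F^{\sEl}_{\lambda}(q) = F^{\scrammag}_{\lambda}(q)$ by exhibiting a weight-preserving bijection between $\sEl$- and $\sammag$-fillings of the Young diagram of $\lambda$. The cleanest route is to pass through percentage-avoiding fillings as an intermediate: by the results of Spiridonov~\cite{AS} and the bijective treatment of Josuat-Verg\`es~\cite{MJV} cited earlier in the paper, percentage-avoiding fillings of a Young shape are in size-preserving bijection with $\sammag$-fillings; an analogous application of the same techniques (obtained either directly or by applying a symmetry that carries the $\sammag$-pattern to the $\sEl$-pattern) yields a size-preserving bijection between percentage-avoiding fillings and $\sEl$-fillings. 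Composing the two gives the desired equality of generating functions.

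The main obstacle is the first step: carefully tracking normalizations between the two conventions. Specifically, one must confirm that Postnikov's $q$-statistic agrees (after accounting for the substitution $q \mapsto q^{-1}$ and the global factor $q^{\ell(w_\lambda)} = q^{|\lambda|}$) with $|f|$ as defined in this section, so that his $q$-analog of $\#[\id, w_\lambda]$ precisely becomes $q^{\ell(w_\lambda)} P_{w_\lambda}(q^{-1})$. Once that match is established, the second equality follows essentially automatically from the bijective work of Spiridonov and Josuat-Verg\`es together with the fact that $\RSE_{w_\lambda}$ reduces to a Young shape to which their bijections apply verbatim.
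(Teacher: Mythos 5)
Your first step is sound and matches the paper's approach: the equality $F^{\scrammag}_\lambda(q) = q^{\ell(w_\lambda)}P_{w_\lambda}(q^{-1})$ is indeed a reading of Postnikov's pipe-dream result, with lexicographically maximal pipe dreams corresponding to $\sammag$-fillings and the statistic $|f|$ tracking $\ell(w) - \ell(u)$.

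Your second step, however, has a genuine gap. You propose to deduce $F^{\sEl}_\lambda(q) = F^{\scrammag}_\lambda(q)$ by composing \emph{size-preserving} bijections through percentage-avoiding fillings, citing Spiridonov and Josuat-Verg\`es. But no such size-preserving bijection can exist. The generating function $F^{\mathrm{PAF}}_\lambda(q) = \sum_f q^{|f|}$ over percentage-avoiding fillings is always palindromic, because the percentage patterns are closed under the global swap $0\leftrightarrow 1$, which sends $|f|$ to $|\lambda|-|f|$. On the other hand, $F^{\sEl}_\lambda(q) = F^{\scrammag}_\lambda(q) = q^{\ell(w_\lambda)}P_{w_\lambda}(q^{-1})$ is palindromic only when $P_{w_\lambda}$ is, i.e., only when $w_\lambda$ is smooth (equivalently, $\lambda$ is a rectangle). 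For $\lambda = (2,2)$ (so $w_\lambda = 3412$) one computes
\[
q^{4}P_{3412}(q^{-1}) = 1 + 4q + 5q^2 + 3q^3 + q^4, \qquad F^{\mathrm{PAF}}_{(2,2)}(q) = 1 + 4q + 4q^2 + 4q^3 + q^4,
\]
which agree at $q=1$ (both equal $14$) but are distinct polynomials. So the Spiridonov/Josuat-Verg\`es equivalences you invoke are purely enumerative and do not refine to the statistic $|f|$; the symmetry you allude to (carrying $\sammag$ to $\sEl$) is a $180^\circ$ rotation, which does not preserve a Young shape in general. The paper avoids this entirely: the single wiring-diagram/pipe-dream correspondence produces \emph{both} canonical subwords — lexicographically maximal giving $\sammag$-fillings, lexicographically minimal giving $\sEl$-fillings — each in a length-preserving bijection with $[\id, w_\lambda]$, so both generating functions equal $q^{\ell(w_\lambda)}P_{w_\lambda}(q^{-1})$ directly, with no detour through percentage avoidance.
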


\begin{proof}[Proof sketch]
Given $w$
in $\Sn{n}$, fix a reduced decomposition  of $w$ and the corresponding
{\em wiring diagram} of the decomposition. Then each $u$ in $[\id,w]$
 is obtained as a subword of the reduced decomposition \cite[Prop.~2.1.3]{LM}. To rule out repetitions one can choose the
{\em lexicographically maximal} ({\em minimal}) subword that is a reduced
expression for $u$. Postnikov then characterized these subwords as
certain {\em pipe dreams} of the wiring diagram, obtained by changing
crossings of wires to uncrossings, with two restrictions: if two wires cross at a point $P$ then they cannot cross
or uncross before (after) $P$. Call these 
{\em lexicographically maximal (minimal) pipe dreams}; see Figure~\ref{wdel}. It follows that they are in bijection with the
elements $u$ in $[\id,w]$.

Next, for a Grassmannian permutation $w_{\lambda}$ in 
$\Sn{n}$, Postnikov describes a wiring diagram with crossings exactly
on the cells of the Young diagram of $\lambda$. Then pipe dreams of this
wiring diagram correspond to fillings of $\lambda$. 
Moreover, the lexicographically maximal
(minimal) pipe dreams are exactly the
$\sammag$-fillings ($\sEl$-fillings) of $\lambda$. This yields a
correspondence $u\leftrightarrow f$ between $u$ in $[\id,w_{\lambda}]$ and
$\sammag$-fillings ($\sEl$-fillings) $f$ of $\lambda$ such that $\ell(u) = \ell(w) - |f|$, as desired.
\end{proof}

Theorem~\ref{thm:postElGrass} can be extended to $321$-avoiding permutations.  (This extension is due to Postnikov--Spiridonov (personal communication), but it seems that the statement has not been written down anywhere before.)

\begin{theorem} \label{prop:BruhatSkew}
For $w$ in $\Sn{n}$ avoiding $321$
we have that  $F^{\sEl}_{w}(q) = F^{\scrammag}_{w}(q)=q^{\ell(w)}P_{w}(q^{-1})$.
\end{theorem}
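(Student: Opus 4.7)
The plan is to adapt the argument sketched for Theorem~\ref{thm:postElGrass} (Postnikov's Grassmannian case) to the $321$-avoiding setting. The crucial enabler is Proposition~\ref{prop:Ew321}: for $w$ avoiding $321$, the SE diagram $\RSE_w$ is, up to deleting empty rows and columns, a skew Young shape, and it has the SE property. This is precisely the structural feature that made the Grassmannian argument succeed, and it should be enough to push the argument through in general.

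First I would construct, for each $321$-avoiding $w$, a reduced word for $w$ whose associated wiring diagram has crossings sitting exactly in the cells of $\RSE_w$. A natural construction is the hook wiring diagram (already illustrated in Figure~\ref{wd} for $w=351624$): traverse the cells of $\RSE_w$ row by row from top to bottom, left to right within each row, and to the cell $(i, w_j) \in \RSE_w$ assign the simple reflection that transposes the wires currently labeled $w_j$ and $w_i$. Because $w$ is $321$-avoiding, no three wires meet pairwise in the diagram, so the resulting expression has length exactly $|\RSE_w| = \ell(w)$ and is reduced. The SE property ensures that the wires fit neatly inside the skew shape, so crossings of the wiring diagram are in bijection with cells of $\RSE_w$.

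Next, I would invoke Postnikov's subword/pipe-dream machinery as in the proof sketch of Theorem~\ref{thm:postElGrass}. Each $u \in [\id,w]$ is represented by a unique lexicographically maximal (respectively, minimal) reduced subword of the fixed reduced decomposition, and such a subword corresponds to a lex-max (lex-min) pipe dream on the wiring diagram. Marking each crossing by $1$ and each uncrossing by $0$ yields a binary filling $f$ of $\RSE_w$ of size $|f| = \ell(w) - \ell(u)$. Summing $q^{|f|}$ over all lex-max (lex-min) pipe dreams therefore gives
\[
\sum_{u \preceq w} q^{\ell(w) - \ell(u)} = q^{\ell(w)} P_w(q^{-1}),
\]
which is the desired right-hand side.

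The heart of the proof, and the expected main obstacle, is to verify that the local forbidden configurations for lex-max (lex-min) pipe dreams match exactly the $\sammag$-patterns (respectively, $\sEl$-patterns) defining $F^{\scrammag}_w(q)$ and $F^{\sEl}_w(q)$. In the straight-shape case this is straightforward because every $2 \times 2$ sub-rectangle of cells is automatically present; in the skew case one must check that the SE property of $\RSE_w$ implies that whenever two pairs of wires can produce a lex-violating pattern, the four relevant cells all lie in $\RSE_w$, and conversely that no spurious forbidden patterns are created by wires running through cells of the ambient rectangle that lie outside $\RSE_w$ (particularly near the inner boundary $\mu$). Once this local-to-global pattern translation is confirmed, the identities $F^{\sEl}_w(q) = F^{\scrammag}_w(q) = q^{\ell(w)} P_w(q^{-1})$ follow immediately from the weight-preserving bijection with $[\id, w]$.
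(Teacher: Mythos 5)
Your proposal follows essentially the same route as the paper's proof sketch: construct the hook wiring diagram for $w$ so that its crossings sit exactly on the cells of $\RSE_w$ (which is a skew Young shape with the SE property by Proposition~\ref{prop:Ew321}), then run Postnikov's lex-max/lex-min subword and pipe-dream argument to identify $\sammag$- and $\sEl$-fillings with elements $u \preceq w$, with $|f| = \ell(w) - \ell(u)$. You also correctly pinpoint the step the paper glosses over, namely that the local lex-maximality (lex-minimality) conditions translate into the $\sammag$- ($\sEl$-) patterns exactly because the SE property guarantees the relevant cells of a would-be forbidden rectangle are all present in $\RSE_w$; the paper notes in Figure~\ref{fig:lexpipedreams4231} that this is precisely where the argument breaks for $w$ containing $321$.
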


\begin{proof}[Proof sketch]
The argument is essentially the same as
that of Theorem~\ref{thm:postElGrass} sketched
above. It is necessary to give a wiring diagram for $w$
analogous to the one for a  Grassmannian permutation
with crossings exactly on the cells of the diagram of $w$. Given $w$, for each $i=1,\ldots,n$ we draw a
wire starting from the first entry of the $i$th row that goes right
until it reaches the entry $(i,w_i)$ where it turns $90^{\circ}$ and
continues up to end in the first entry of the $w_i$th column. This collection of $n$ wires is a wiring diagram of $w$
with crossings in exactly the elements $(i,j) \in \RSE_w$. See
Figure~\ref{wd} for an example and \cite[Rem.~2.1.9]{LM} for a similar
construction. We call this wiring diagram the {\bf
  hook wiring diagram} of $w$. If $w$ is a Grassmannian permutation
$w_{\lambda}$ or $w$ avoids $321$ then $\RSE_w$ is, up to
removing rows and columns that do not intersect $\RSE_w$,
the Young diagram of $\lambda$ or of a skew Young shape respectively. The rest of the argument in the
proof of  Theorem~\ref{thm:postElGrass} follows for this wiring diagram
on the skew shape. However, the argument can fail for $w$ containing $321$ (see Figure~\ref{fig:lexpipedreams4231}).
\end{proof}

\begin{remark}
A consequence of this result and Lemma~\ref{lemma:CP} is that when $w$ avoids
$321$ and $3412$ then $F^{\sEl}_w(q)=F^{\scrammag}_w(q) =
P_w(q)$. From computations, both statements appear to be if-and-only-ifs. This class of permutations has also appeared several times
in the literature \cite{PetTenn,pattdatabase}.
\end{remark}

\begin{figure}
\begin{center}
\subfigure[]{
\includegraphics{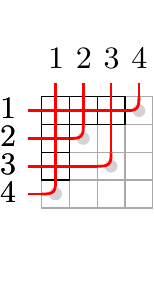}
}
\quad
\subfigure[]{
\includegraphics{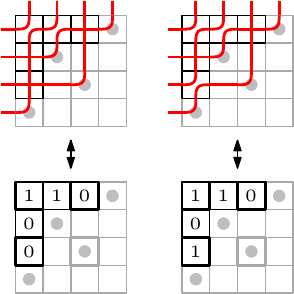}
\label{fig:ammagpipes}
}
\quad
\subfigure[]{
\includegraphics{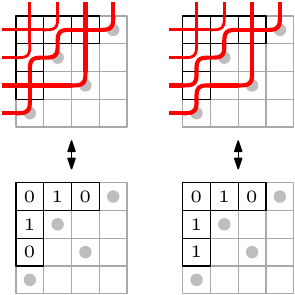}
\label{fig:lexmaxpipes}
}
\caption{(a) The hook wiring diagram of $w=4231$ associated to the reduced
word $s_3s_2s_1s_2s_3$. (b) Two fillings of $\RSE_{4231}$ containing the pseudo-$\sammag$ pattern that correspond to lexicographically maximal pipe
  dreams. (c) Two lexicographically non-maximal pipe dreams of the hook  wiring diagram of  $4231$ that do correspond to pseudo-$\sammag$ fillings of
  $\RSE_{4231}$.}
\label{fig:lexpipedreams4231}
\end{center}
\end{figure}

\subsection{$q$-counting pseudo fillings of permutation diagrams} \label{sec:pseudofillings}
In this section we look briefly at fillings of $\RSE_w$ where the
diagram might not have the SE property.  Because of this defect, we put extra restrictions on the fillings just as we did with the percentage avoiding fillings in
Section~\ref{sec:AOvsPAF}. We say that a filling $f$ of $\RSE_w$ is
\begin{compactitem}
\item  a \textbf{pseudo-$\sEl$-filling} if it avoids the patterns
\raisebox{-7pt}{\includegraphics{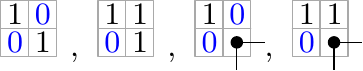}},  where the solid
dot indicates an entry of the permutation, and
\item a \textbf{pseudo-$\sammag$-filling} if it avoids the patterns
\raisebox{-7pt}{\includegraphics{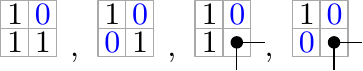}}, where the solid
dot indicates an entry of the permutation.
\end{compactitem}
For $w$ in $\Sn{n}$ and $\pi \in \{\sEl,\sammag\}$, let
$\PF_w^{\pi}(q)$ be the generating function $\sum_{f} q^{|f|}$ where
the sum is over fillings of $\RSE_w$ avoiding the appropriate
pseudo-$\pi$ pattern. Note that if $\RSE_w$ has the SE property then
the last two patterns to avoid  in pseudo-$\pi$-fillings
will never be relevant, and so these
fillings reduce to the usual $\pi$-fillings. For such $w$ we have that $\PF^{\pi}_w(q)=F^{\pi}_w(q)$.

The next conjecture
suggests an extension of Theorem~\ref{prop:BruhatSkew} for Gasharov--Reiner
permutations and pseudo-fillings.

\begin{conjecture} \label{conj:pseudofillpoin}
For $w$ in $\GR{n}$ we have that 
\[
\PF^{\sEl}_w(q) = \PF^{\scrammag}_w(q) =  q^{\ell(w)}P_{w}(q^{-1}).
\]
\end{conjecture}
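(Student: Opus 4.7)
The plan is to extend the hook-wiring-diagram/pipe-dream argument from the proof sketch of Theorem~\ref{prop:BruhatSkew} to the Gasharov--Reiner setting. For any $w \in \Sn{n}$, the hook wiring diagram has crossings at precisely the cells of $\RSE_w$, and its underlying word is a reduced expression for $w$. Pipe dreams of this diagram correspond to subwords of this reduced expression, and the \emph{lexicographically maximal} (resp.\ \emph{minimal}) pipe dreams that realize a given $u$ are in bijection with the elements $u \in [\iota,w]$; the number of uncrossings in such a pipe dream equals $\ell(w)-\ell(u)$. Hence the generating function of lex-max (resp.\ lex-min) pipe dreams weighted by $q^{\#\text{uncrossings}}$ equals $\sum_{u \preceq w} q^{\ell(w)-\ell(u)} = q^{\ell(w)} P_w(q^{-1})$, which is the desired right-hand side of the conjecture.

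To obtain the two equalities in the conjecture, I would show that for $w \in \GR{n}$ the natural bijection between pipe dreams and binary fillings of $\RSE_w$ (assigning $0$ to each uncrossing and $1$ to each retained crossing) restricts to bijections between lex-max pipe dreams and pseudo-$\sammag$-fillings, and between lex-min pipe dreams and pseudo-$\sEl$-fillings. When $w$ avoids $321$, the diagram $\RSE_w$ has the SE property, so the extra pseudo-patterns are vacuous and these bijections reduce to Theorem~\ref{prop:BruhatSkew} (with the Grassmannian case being Theorem~\ref{thm:postElGrass}). Heuristically, the two extra forbidden patterns in the pseudo-versions (those involving an entry of $w$) are exactly the local constraints needed to account for cells missing from $\RSE_w$ that would be present had $\RSE_w$ been a skew shape.

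The main obstacle is proving these two bijections in the presence of $321$-patterns. Figure~\ref{fig:lexpipedreams4231} shows that both directions fail for $w=4231$, the simplest of the four patterns excluded from $\GR{n}$, and suggests that a similar breakdown occurs precisely for each of $4231$, $35142$, $42513$, $351624$. My plan is to trace each possible local violation---a non-lex-max pipe dream whose filling is nonetheless pseudo-$\sammag$-avoiding, or a lex-max pipe dream whose filling contains a pseudo-$\sammag$ pattern---to an explicit embedded occurrence of one of the four excluded patterns in $w$. This amounts to a finite but careful local analysis of the possible configurations of three or four wires in the hook wiring diagram near a would-be violation, and I expect each type of violation to correspond naturally to one of the four patterns.

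As a fallback, I would adapt the inductive approach of Section~\ref{sec:poinmat}: use Proposition~\ref{prop:alwaysredpair} to arrange that the first descent of $w$ is a reduction pair (possibly after passing to $w^{-1}$, provided one checks $\PF^\pi_w(q)=\PF^\pi_{w^{-1}}(q)$ via transposition of $\RSE_w$, which interchanges $\sEl$- and $\sammag$-patterns), and then prove by induction a recursion for $\PF^{\scrammag}_w(q)$ and $\PF^{\sEl}_w(q)$ mirroring the simplified heavy identity $P_w(t) = t P_{s_iw}(t) + P_{v(w)}(t)$ or the light identity \eqref{LRP prop equation for Poincare} after setting $t=q^{-1}$ and rescaling by $q^{\ell(w)}$. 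The main difficulty in this fallback is to identify a cell in $\RSE_w$ near the descent whose $0/1$ value partitions pseudo-$\sammag$-fillings (and pseudo-$\sEl$-fillings) into two classes that restrict respectively to pseudo-fillings of $\RSE_{s_iw}$ and $\RSE_{v(w)}$, in a way that is compatible with the shift in statistic.
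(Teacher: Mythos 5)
The statement you have been asked to prove is labelled a \emph{conjecture} in the paper, and the paper does not prove it: the authors verify it only by brute force for $n\leq 7$ and then, in the remark immediately following the conjecture, describe two candidate strategies and explain why each falls short. Your proposal does not close this gap; it is a plan, not a proof, and as you yourself note the key step in each branch is missing. Moreover, both branches of your plan coincide with the two strategies the authors already tried. Your primary approach (extending the hook-wiring-diagram/pipe-dream correspondence from Theorem~\ref{prop:BruhatSkew} to all Gasharov--Reiner permutations, with lex-max pipe dreams matching pseudo-$\sammag$-fillings and lex-min matching pseudo-$\sEl$-fillings) is exactly the first approach described in the remark; the authors report brute-force evidence for it up to $n\leq 6$ but no proof, and Figure~\ref{fig:lexpipedreams4231} is precisely their illustration of where the naive bijection breaks down. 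Your fallback (an induction on reduction pairs mirroring Section~\ref{sec:poinmat}) is the second approach in the remark, where the authors state they can prove the light-pair recursion for $\PF^{\scrammag}$ and the heavy-pair recursion for $\PF^{\sEl}$, but \emph{have been unable} to prove the two complementary recursions, nor the cleaner heavy-pair recursion $\PF^\pi_w(q) = \PF^\pi_{s_iw}(q) + q\cdot\PF^\pi_{v}(q)$. Since Proposition~\ref{prop:alwaysredpair} only guarantees a reduction pair of \emph{some} type at the first descent of $w$ or $w^{-1}$, not of a chosen type, the two provable recursions do not suffice to run the induction, which is exactly where the authors are stuck.

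One additional technical caution about your fallback: transposition of $\RSE_w$ gives $\PF^{\sEl}_w(q) = \PF^{\scrammag}_{w^{-1}}(q)$, \emph{not} $\PF^{\pi}_w(q) = \PF^{\pi}_{w^{-1}}(q)$ for a fixed $\pi$ as you wrote; the latter would presuppose the first equality of the conjecture. The corrected identity $\PF^{\sEl}_w = \PF^{\scrammag}_{w^{-1}}$ does reduce the conjecture to proving $\PF^{\scrammag}_w(q) = q^{\ell(w)}P_w(q^{-1})$ alone (using $P_w = P_{w^{-1}}$ and that $\GR{n}$ is closed under inversion), but this reduction does not circumvent the missing heavy-pair recursion for $\PF^{\scrammag}$, so the obstruction stands. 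To make genuine progress you would need either a proof of the missing reduction-pair recursion(s), or a new idea that explicitly characterizes lex-max/lex-min pipe dreams for hook wiring diagrams of arbitrary Gasharov--Reiner permutations and shows they are cut out precisely by the pseudo-patterns.
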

\noindent This conjecture has been verified by brute force for $n\leq
7$. A proof of this conjecture, combined with Theorems~\ref{thm:HLSS}
and \ref{theorem:AO-RA}, would
extend the equivalence of Theorem~\ref{thm:APGrass} from
Grassmannian to
Gasharov--Reiner permutations. Recall that the combinatorial objects
in Theorem~\ref{thm:APGrass} identified with Grassmannian permutations
$w_{\lambda}$ also count and parametrize {\em positroid cells} inside a {\em Schubert cell} $\Omega_{\lambda}$. Do some of the objects
described in this paper linked to other permutations $w$ count cells in a
decomposition of a generalization of $\Tnn$?

\begin{remark}
Note that the number of pseudo-$\sEl$-fillings and the number of
pseudo-$\sammag$-fillings of $\RSE_w$ can differ for certain
permutations $w$. For example for $w=35241$ the
number of pseudo-$\sEl$ fillings of $\RSE_{35241}$ is $56$ and the
number of pseudo-$\sammag$ fillings of $\RSE_{35241}$ is $60$. The
numbers differ also for the inverse  $53142$ of $w$. These are the only permutations in $\Sn{5}$ where
the number of these two fillings differ. 

Similarly, the number of pseudo-$\sEl$-fillings of $\RSE_w$ and the size $\#[\id,w]$
of the Bruhat interval can differ for certain permutations $w$. For
example, for $w=52341$, we have that $\PF^{\sEl}_{52341}(1)=72$ and $\#[\id,52341]=68$. 
\end{remark}

\begin{remark}
One approach to prove Conjecture~\ref{conj:pseudofillpoin} would be to 
extend Postnikov's correspondence from Theorem~\ref{thm:postElGrass}
to lexicographically maximal (minimal) pipe dreams encoding $u$ in
$[\id,w]$ and pseudo-$\sammag$- (pseudo-$\sEl$-) fillings of $\RSE_w$. Brute
force calculations suggest there is such a correspondence for all $w$
in $\GR{n}$ up to $n\leq 6$ but, it may
fail for other permutations; see Figure~\ref{fig:lexpipedreams4231}.

Another approach to prove the conjecture is the reduction pairs used in the proof of
Theorem~\ref{thm:matvspoin}. One can show, using an analysis similar to the one by Williams in
\cite{LWilliams}, that if the first descent of $w$, involving the entries
$y=(i,w_i)$ and $x=(i+1,w_{i+1})$, is a light reduction pair then
\begin{equation}
\PF_w^{\scrammag}(q) = q\cdot \PF_{s_iw}^{\scrammag}(q) + \PF_{w-y}^{\scrammag}(q),
\end{equation}
and it is also not difficult to show that if the first descent of $w$, involving the
entries $y=(i,w_i)$ and $x=(i+1,w_{i+1})$, is a heavy reduction pair then
\begin{equation}
\PF_w^{\sEl}(q) = q\cdot \PF_{s_iw}^{\sEl}(q) + \PF_{w-y}^{\sEl}(q) + \PF_{w-x}^{\sEl}(q) - \PF_{w-y-x}^{\sEl}(q).
\end{equation}
These recursions match those for $P_w(q)$ in Propositions~\ref{prop:LRP} and~\ref{prop:HRP}; however, we have been unable to prove the two corresponding recursions necessary to complete the induction.  We have also been unable to prove $\PF_w^{\pi}(q) =
\PF^{\pi}_{s_iw}(q) + q\cdot PF_v^{\pi}(q)$ for $\pi$ in $\{\sEl, \sammag\}$, which would be analogous to \eqref{eq:matrix HRP recursion}.
\end{remark}

\section*{Acknowledgements}
\label{sec:ack} 
We thank Alexander Postnikov for multiple suggestions and questions
that led to this project; Axel Hultman for writing an appendix with an
elegant proof of part of Theorem ~\ref{thm:AO-RP-PAF}; Ricky Liu, Luis Serrano, and Alexey Spiridonov for very helpful
discussions;  Herman Goulet-Ouellet for bringing to our attention the
paper \cite{GJW1978}; and Pasha Pylyavskyy for the term ``zoo'' in the
introduction. We thank the referees for helpful comments and suggestions.
We also thank the developers of \rm{FindStat} \cite{findstat}, from
which we obtained the first evidence for
Theorem~\ref{thm:AO-RP-PAF}. 
The second author would like to thank ICERM and the organizers of its
program in {\em Automorphic Forms, Combinatorial
  Representation Theory, and Multiple Dirichlet series} during which 
part of this work was done. 

\appendix
\section{Acyclic orientations, rook placements, inversion graphs and the chromatic polynomial (by Axel Hultman)}
\label{sec:AOvsRP}

In this appendix we provide an independent proof of the part of Theorem
\ref{theorem:AO-RA} which asserts $\AO(G_w) = \RP(\RSW_w)$. More
precisely, we prove the following statement:

\begin{theorem}\label{th:chromatic}
For any $w\in \Sn{n}$, the chromatic polynomial of the inversion
graph $G_w$ satisfies
\begin{equation} \label{eq:chromatic}
\chi_{G_w}(t) = \sum_{i=0}^n r_{n-i}(\RSW_w) t(t-1)\cdots (t-i+1),
\end{equation}
where the {\em rook number} $r_k ({\RSW_w})$ is the number of placements of
$k$ non-attacking rooks on $\RSW_w$.
\end{theorem}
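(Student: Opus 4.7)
The plan is to prove Theorem~\ref{th:chromatic} by combining the standard ``color-class'' expansion of the chromatic polynomial with a direct bijection between partitions of $[n]$ into increasing subsequences of $w$ and rook placements on $\RSW_w$. Before starting I would remark that Theorem~\ref{th:chromatic} does suffice to recover $\AO(G_w) = \RP(\RSW_w)$: by Stanley's classical identity $\AO(G) = (-1)^n\chi_G(-1)$ applied at $t = -1$ one gets
\begin{equation*}
\AO(G_w) = (-1)^n \sum_{i=0}^n r_{n-i}(\RSW_w)(-1)^i\, i! = \sum_{k=0}^n (-1)^k r_k(\RSW_w)(n-k)!,
\end{equation*}
and the right-hand side equals $\RP(\RSW_w)$ by the usual inclusion--exclusion expansion of rook placements avoiding a board $B$ in terms of the rook numbers $r_k(B)$.

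First I would recall the following well-known expansion: any proper coloring $c\colon [n] \to [t]$ of a graph $G$ determines a set partition $\pi$ of $[n]$ into color classes, each of which is necessarily an independent set of $G$, together with an injective assignment of colors to the blocks of $\pi$. This gives
\begin{equation*}
\chi_G(t) = \sum_{\pi} t(t-1)\cdots(t-|\pi|+1),
\end{equation*}
where $\pi$ ranges over set partitions of $[n]$ all of whose blocks are independent sets of $G$. For the inversion graph $G_w$, a set $S = \{a_1 < \cdots < a_k\}$ is independent precisely when $w_{a_1} < \cdots < w_{a_k}$, that is, when $S$ is an increasing subsequence of $w$. Thus Theorem~\ref{th:chromatic} reduces to showing that the number of set partitions of $[n]$ into exactly $i$ increasing subsequences of $w$ equals $r_{n-i}(\RSW_w)$.

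To prove this last equality, I would exhibit an explicit bijection. Given a partition $\pi = \{B_1, \ldots, B_i\}$ of $[n]$ into increasing subsequences of $w$, for each element $a \in [n]$ that is not the largest index in its block, let $b$ denote its immediate successor in the block (so $a < b$ and $w_a < w_b$) and place a rook at the cell $(a, w_b)$. Each such cell lies in $\RSW_w$ by the definition of the SW diagram; there are $n - i$ rooks in total; and they are non-attacking because every element has at most one immediate successor in its block (one rook per row) and at most one immediate predecessor (one rook per column $w_b$). Conversely, a placement of $n-i$ non-attacking rooks on $\RSW_w$ defines a partial injection $\sigma\colon a \mapsto b$ (whenever $(a, w_b)$ is occupied) with $a < \sigma(a)$ and $w_a < w_{\sigma(a)}$. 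Because $\sigma$ strictly increases the index, its functional digraph on $[n]$ is acyclic and decomposes as a disjoint union of directed paths, each of which is an increasing subsequence of $w$; the number of such paths is $n - (n-i) = i$, giving the required partition.

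The main obstacle I anticipate is essentially bookkeeping: checking carefully that the two constructions are mutual inverses and that the count of rooks matches the number of ``non-first-in-block'' elements. Beyond this, the argument rests only on the characterization of independent sets in $G_w$ as increasing subsequences and on the standard partition expansion of $\chi_G(t)$, so no further input is required.
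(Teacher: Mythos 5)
Your argument is correct and is essentially the same as the paper's proof (due to Hultman in the appendix): both start from the color-class expansion $\chi_{G_w}(t) = \sum_i \#P(w,i)\, t(t-1)\cdots(t-i+1)$ and then biject partitions of $[n]$ into $i$ independent sets (increasing subsequences of $w$) with placements of $n-i$ non-attacking rooks on $\RSW_w$. Your explicit map sending each non-maximal block element $a$ with successor $b$ to a rook at $(a,w_b)$ is precisely the paper's passage from a partition to its unique underlying ``$n$-spine'' subgraph of $\overline{G_w}$, so the two proofs coincide up to phrasing.
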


From  (\ref{eq:chromatic}), the desired assertion follows if one sets $t=-1$
and invokes the standard results
\begin{equation}\label{eq:acy}
\AO(G_w) = (-1)^n\chi_{G_w}(-1)
\end{equation}
and
\begin{equation}\label{eq:hit}
\RP(\RSW_w) = \sum_{i=0}^n(-1)^i r_i (\RSW_w)(n-i)!.
\end{equation}
The identity (\ref{eq:acy}) was originally obtained by Stanley
\cite{Stanley1973} whereas (\ref{eq:hit}) is due to Kaplansky and Riordan \cite{KR1946}.

The idea behind the proof of Theorem \ref{th:chromatic} is
essentially that employed by Goldman, Joichi and White for
proving \cite[Theorem 2]{GJW1978}. Some care is required, though, since
$\RSW_w$ is not in general {\em proper} in the sense of
\cite{GJW1978}. It is, however, possible to make it proper by a suitable rearrangement of
its columns. Then one could apply \cite[Theorem 2]{GJW1978}
directly. After observing that the associated graph $\Gamma_n(B)$ of the
rearranged board is isomorphic to $G_w$, Theorem \ref{th:chromatic}
would follow. Instead of taking this route, let us state a direct proof.

\begin{proof}[Proof of Theorem \ref{th:chromatic}]
In a graph $G$, a subset of the vertices is called {\em independent} if it induces an edgeless
subgraph of $G$. For a positive integer $k$, denote by $P(w,k)$ the
set of partitions of the vertex set of the inversion graph $G_w$ into
$k$ independent subsets. Equivalently, we may think of $P(w,k)$ as the
set of transitively closed subgraphs of the complement graph
$\overline{G_w}$ with $k$ connected components and
all $n$ vertices.

Let us say that an {\em $n$-spine} is a graph on vertex set $[n]$ in which
every connected component is a path whose vertices can be
traversed in increasing (or, going the other way, decreasing)
order. Equivalently, a graph on $[n]$ is an $n$-spine if every vertex
has at most one smaller neighbour and at most one larger neighbour.

A rook on $\RSW_w$ corresponds to a noninversion of $w$, i.e.\
an edge of $\overline{G_w}$. In this way, the non-attacking rook
placements on $\RSW_w$ are in bijective correspondence with the sets
of edges of $\overline{G_w}$ that contain no two edges with a common
smallest vertex and no two edges with a common largest vertex. That
is, the non-attacking $k$-rook placements on $\RSW_w$ correspond
bijectively to the $k$-edge subgraphs of $\overline{G_w}$ that are
$n$-spines. 

If $1\le i_1<i_2<i_3\le n$ and $(i_1,i_2)$ and $(i_2,i_3)$ are noninversions of $w$,
then so is $(i_1,i_3)$. Hence, the transitive closure of any $k$-edge $n$-spine
subgraph of $\overline{G_w}$ is an element of $P(w,n-k)$. Conversely,
every element of $P(w,n-k)$ is clearly the closure of a unique
$n$-spine. This shows that $P(w,n-k)$, too, is in bijection with the $k$-edge
subgraphs of $\overline{G_w}$ that are $n$-spines. Hence,
$r_{n-i}(\RSW_w) = \#P(w,i)$. Now observe that
\[
\chi_{G_w}(t) = \sum_{i=0}^n \#P(w,i) \cdot t(t-1)\cdots (t-i+1),
\]
since (for a positive integer $t$) the term indexed by $i$ in the sum
counts the proper vertex colourings of $G_w$ that use exactly $i$ out
of $t$ given colours. This concludes the proof.
\end{proof}

An illustration of the constructions occurring in the proof is found in
Figure \ref{fi:bijections}.

\begin{figure}[t]

\includegraphics[viewport = 134 550 537 707, clip=true, scale=.85]{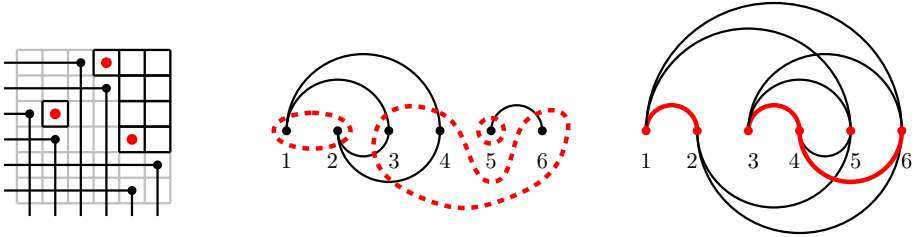}

\caption{A non-attacking $3$-rook placement on the SW diagram of
  $w=341265 \in \Sn{6}$ (left), the corresponding partition of $G_w$
  into $6-3=3$ independent sets (center) and the associated $3$-edge
  subgraph of $\overline{G_w}$ which forms a $6$-spine (right).} \label{fi:bijections}
\end{figure}

\begin{remark}[by AHM and JBL]
\label{vexillary case of AO-RA}
The equality \eqref{eq:chromatic} is particularly nice
when the reverse of $w$ is \emph{vexillary}, i.e., when $w$ avoids $3412$. In
this case $\RSW_w$ is, up to permuting rows and
columns, a Young diagram $\lambda =
(\lambda_1,\lambda_2,\ldots,\lambda_n)$ where $0\leq \lambda_1\leq \lambda_2
\leq \cdots$.  Then calculating the right side of
\eqref{eq:chromatic} is straightforward: by \cite{GJW1975} we have that $\sum_{i=0}^n r_{n-i}(\RSW_w) t(t-1)\cdots (t-i+1)
= \prod_{i=1}^n (t +\lambda_i -i+1)$.  On the other hand, we say that a graph is {\em chordal} if every cycle of four or more edges in the
graph has a chord, i.e., an edge joining two non-consecutive vertices in
the cycle. It is well known that the chromatic polynomial of a chordal graph $G$ may be written as $\prod_{i=1}^n (t-e_i)$ for
certain nonnegative integers $e_i$ depending on $G$ (see e.g. \cite[Prop.~12]{OPY}). One can show that the
inversion graph $G_w$ is chordal if and only if $w$ avoids
$3412$ and that in this case the multisets $\{e_i\}_{i=1}^n$ and $\{i-\lambda_i
-1\}_{i=1}^n$ are equal. 
\end{remark}

\newcommand{\etalchar}[1]{$^{#1}$}

\bibliographystyle{alpha}

\begin{thebibliography}{LLM{\etalchar{+}}11}

\bibitem[AB14a]{AbeBilley}
H.~Abe and S.~Billey.
\newblock Consequences of the {L}akshmibai-{S}andhya theorem: the ubiquity of
  permutation patterns in {S}chubert calculus and related geometry.
\newblock to appear {\em  J. Math. Soc. Japan},
\newblock \href{http://arxiv.org/abs/1403.4345}{arXiv:1403.4345}, 2014.

\bibitem[AB14b]{AlbertBrignall}
M.H. Albert and R.~Brignall.
\newblock Enumerating indices of {S}chubert varieties defined by inclusions.
\newblock {\em J. Combin. Theory Ser. A}, 123:154--168, 2014.

\bibitem[Bil98]{Billey}
S.C. Billey.
\newblock Pattern avoidance and rational smoothness of {S}chubert varieties.
\newblock {\em Adv. Math.}, 139(1):141--156, 1998.

\bibitem[BSea13]{findstat}
C.~Berg, C.~Stump, et. al.
\newblock Findstat: The combinatorial statistic finder, \url{www.FindStat.org},
  2013.

\bibitem[Car94]{CarrellPeterson}
J.B. Carrell.
\newblock The {B}ruhat graph of a {C}oxeter group, a conjecture of {D}eodhar,
  and rational smoothness of {S}chubert varieties.
\newblock In {\em Algebraic groups and their generalizations: classical methods
  ({U}niversity {P}ark, {PA}, 1991)}, volume~56 of {\em Proc. Sympos. Pure
  Math.}, pages 53--61. Amer. Math. Soc., Providence, RI, 1994.

\bibitem[Din97]{Ding}
K.~Ding.
\newblock Rook placements and cellular decomposition of partition varieties.
\newblock {\em Discrete Math.}, 170(1-3):107--151, 1997.

\bibitem[GJW75]{GJW1975}
J.~R. Goldman, J.T. Joichi, and D.~E. White.
\newblock Rook theory {I}. Rook equivalence of {F}errers boards.
\newblock {\em Proc. Amer. Math. Soc.}, 52(1):485--492, 1975.

\bibitem[GJW78]{GJW1978}
J.~R. Goldman, J.T. Joichi, and D.~E. White.
\newblock Rook theory {I}{I}{I}. Rook polynomials and the chromatic structure
  of graphs.
\newblock {\em J. Combin. Theory, Ser. B}, 25(2):135--142, 1978.

\bibitem[GR86]{GarsiaRemmel}
A.M. Garsia and J.B. Remmel.
\newblock {$q$}-counting rook configurations and a formula of {F}robenius.
\newblock {\em J. Combin. Theory Ser. A}, 41(2):246--275, 1986.

\bibitem[GR02]{GasharovReiner}
V.~Gasharov and V.~Reiner.
\newblock Cohomology of smooth {S}chubert varieties in partial flag manifolds.
\newblock {\em J. London Math. Soc. (2)}, 66(3):550--562, 2002.

\bibitem[Hag98]{Hag}
J.~Haglund.
\newblock $q$-rook polynomials and matrices over finite fields.
\newblock {\em Adv. in Appl. Math.}, 20(4):450--487, 1998.

\bibitem[HLSS09]{HLSS}
A.~Hultman, S.~Linusson, J.~Shareshian, and J.~Sj\"ostrand.
\newblock From {B}ruhat intervals to intersection lattices and a conjecture of
  {P}ostnikov.
\newblock {\em J. Combin. Theory, Ser. A}, 116(3):564--580, 2009.

\bibitem[Hul11]{H}
A.~Hultman.
\newblock Inversion arrangements and {B}ruhat intervals.
\newblock {\em J. Combin. Theory Ser. A}, 118(7):1897--1906, 2011.

\bibitem[JV10]{MJV}
M.~Josuat-Verg\`es.
\newblock Bijections between pattern-avoiding fillings of {Y}oung diagrams.
\newblock {\em J. Combin. Theory, Ser. A}, 117(8):1218--1230, 2010.

\bibitem[KLM13]{KLM}
A.~Klein, J.B. Lewis, and A.H. Morales.
\newblock Counting matrices over finite fields with support on skew Young
  diagrams and complements of Rothe diagrams.
\newblock {\em J. Alg. Comb.}, 2013.

\bibitem[KR46]{KR1946}
I.~Kaplansky and J.~Riordan.
\newblock The problem of the rooks and its applications.
\newblock {\em Duke Math. Journal}, 13(2):259--268, 1946.

\bibitem[LLM{\etalchar{+}}11]{LLMPSZ}
J.B. Lewis, R.I. Liu, A.H. Morales, G.~Panova, S.V. Sam, and Y.X. Zhang.
\newblock Matrices with restricted entries and {$q$}-analogues of permutations.
\newblock {\em J. Comb.}, 2(3):355--395, 2011.

\bibitem[LM14]{LMFPSAC}
J.B. Lewis and A.H. Morales.
\newblock Combinatorics of diagrams of permutations.
\newblock In {\em DMTCS Proceedings, 26th International Conference on Formal
  Power Series and Algebraic Combinatorics (FPSAC 2014)}, pages 703--714, 2014.

\bibitem[LS90]{LakshmibaiSandhya}
V.~Lakshmibai and B.~Sandhya.
\newblock Criterion for smoothness of {S}chubert varieties in {${\rm
  Sl}(n)/B$}.
\newblock {\em Proc. Indian Acad. Sci. Math. Sci.}, 100(1):45--52, 1990.

\bibitem[Man01]{LM}
L.~Manivel.
\newblock {\em Symmetric functions, {S}chubert polynomials and degeneracy
  loci}.
\newblock SMF/AMS Texts and Monographs, 2001.

\bibitem[OPY08]{OPY}
S.~Oh, A.~Postnikov, and H.~Yoo.
\newblock {B}ruhat order, smooth {S}chubert varieties, and hyperplane
  arrangements.
\newblock {\em J. Combin. Theory, Ser. A}, 115(7):1156--1166, 2008.

\bibitem[Pos06]{AP}
A.~Postnikov.
\newblock Total positivity, {G}rassmannians, and networks.
\newblock \href{http://arxiv.org/abs/math/0609764}{arXiv:0609764}, 2006.

\bibitem[PT12]{PetTenn}
T.~K. Petersen and B.~Tenner.
\newblock The depth of a permutation.
\newblock {\em J. Combin.} 6(1-2):145--178, 2015.

\bibitem[RS98]{reiner-shimozono}
V.~Reiner and M.~Shimozono.
\newblock Percentage-avoiding, northwest shapes and peelable tableaux.
\newblock {\em J. Combin. Theory, Ser. A}, 82(1):1--73, 1998.

\bibitem[Sj{\"{o}}07]{Sj}
J.~Sj{\"{o}}strand.
\newblock {B}ruhat intervals are rooks on skew {F}errers boards.
\newblock {\em J. Combin. Theory, Ser. A}, 114(7):1182--1198, 2007.

\bibitem[Spi09]{AS}
A.~Spiridonov.
\newblock {\em Pattern avoidance in Binary Fillings of Grid Shapes}.
\newblock PhD thesis, Massachusetts Institute of Technology, 2009.

\bibitem[Sta73]{Stanley1973}
R.P. Stanley.
\newblock Acyclic orientations of graphs.
\newblock {\em Discrete Math.}, 5(2):171--178, 1973.


\bibitem[Ste98]{Stem}
J.R. Stembridge.
\newblock Counting points on varieties over finite fields related to a
  conjecture of {K}ontsevich.
\newblock {\em Ann. Comb.}, 2(4):365--385, 1998.

\bibitem[Ten14]{pattdatabase}
B.~Tenner.
\newblock Database of permutation pattern avoidance, 2014.
\newblock
  \href{http://math.depaul.edu/bridget/cgi-bin/dppa.cgi?choice=3&search=P0006}{P0006}.

\bibitem[Wil05]{LWilliams}
L.K. Williams.
\newblock Enumeration of totally positive {G}rassmann cells.
\newblock {\em Adv. Math.}, 190(2):319--342, 2005.

\end{thebibliography}

\end{document}